\DeclareMathOperator{\sign}{sign}
\DeclareMathOperator{\Ran}{Ran}
\DeclareMathOperator{\spec}{spec}
\DeclareMathOperator{\supp}{supp}
\DeclareMathOperator{\VMO}{VMO}
\DeclareMathOperator{\sing}{sing}
\renewcommand\Im{\hbox{{\rm Im}}\,}
\renewcommand\Re{\hbox{{\rm Re}}\,}
\newcommand{\abs}[1]{\lvert#1\rvert}
\newcommand{\norm}[1]{\lVert#1\rVert}
\newcommand{\jap}[1]{\langle#1\rangle}
\newcommand{\bbT}{{\mathbb T}}
\newcommand{\bbR}{{\mathbb R}}
\newcommand{\bbC}{{\mathbb C}}
\newcommand{\bbZ}{{\mathbb Z}}
\newcommand{\wh}{\widehat}
\newcommand{\ess}{\text{ess}}
\newcommand{\bb}{\mathbf{b}}
\newcommand{\bq}{\mathbf{q}}
\newcommand{\bh}{{\mathbf{h}}}
\newcommand{\bff}{{\mathbf{f}}}
\newcommand{\bg}{{\mathbf{g}}}
\newcommand{\bu}{{\mathbf{u}}}
\newcommand{\sh}{{\sf {h}}}
\newcommand{\bP}{\mathbf{P}}
\newcommand{\bW}{\mathbf{W}}
\newcommand{\bomega}{\pmb{\omega}}
\newcommand{\calB}{\mathcal{B}}
\newcommand{\calU}{\mathcal{U}}
\newcommand{\Sch}{\mathbf{S}}
\numberwithin{equation}{section}
\theoremstyle{plain}
\newtheorem{theorem}{\bf Theorem}[section]
\newtheorem*{theorem*}{Theorem 1.1$'$}
\newtheorem{lemma}[theorem]{\bf Lemma}
\newtheorem{assumption}[theorem]{\bf Assumption}
\newtheorem{corollary}[theorem]{\bf Corollary}
\theoremstyle{definition}
\theoremstyle{remark}
\newtheorem*{remark*}{\bf Remark}
\newcommand{\wt}{\widetilde}
\newcommand{\eps}{\varepsilon}
\newcommand{\loc}{\mathrm{loc}}
\newcommand{\1}{\mathbbm{1}}
\newcommand{\Hank}{ H}
\newcommand{\bHank}{{ \mathbf{H}}}  
\newcommand{\Gank}{{\Gamma}}
\newcommand{\bGank}{{\mathbf{\Gamma}}}
\begin{document}

\title[Spectral asymptotics for Hankel operators]{Spectral asymptotics for compact self-adjoint Hankel operators}

\author{Alexander Pushnitski}
\address{Department of Mathematics, King's College London, Strand, London, WC2R~2LS, U.K.}
\email{alexander.pushnitski@kcl.ac.uk}

\author{Dmitri Yafaev}
\address{Department of Mathematics, University of Rennes-1,
Campus Beaulieu, 35042, Rennes, France}
\email{yafaev@univ-rennes1.fr}

\begin{abstract} 
We describe large classes of compact self-adjoint Hankel operators whose eigenvalues have power asymptotics and obtain   explicit expressions for the coefficient in front of the leading term. The results are stated both in the discrete and continuous representations for Hankel operators. We also elucidate two key principles underpinning the proof of such asymptotic relations. We call them \emph{the localization principle} and \emph{the symmetry principle}. The localization principle says that disjoint components of the singular support of the symbol of a Hankel operator make independent contributions into the asymptotics of eigenvalues. The symmetry principle says that if   the singular support of a symbol   does not contain the points $1$ and $-1$ in the discrete case (or the points $0$ and $\infty$ in the continuous case), then the  spectrum of the corresponding  Hankel operator is asymptotically symmetric with respect to the reflection around zero.  
\end{abstract}

\subjclass[2010]{47B06, 47B35}

\keywords{Power asymptotics  of eigenvalues, symbol, singular support,  the localization principle, the symmetry principle, oscillating kernels}

 \dedicatory{To the memory of Yura Safarov} 

\maketitle

\section{Introduction}\label{sec.z}


\subsection{Localization and symmetry principles}
Hankel  operators   admit various unitary equivalent descriptions. 
First we recall the definition of Hankel operators on the Hardy class $H^2(\bbT)$.
Let $\bbT$ be the unit circle in the complex plane, equipped with the 
normalized Lebesgue measure $dm(\mu)= (2\pi i \mu)^{-1} d\mu$, $\mu\in\bbT$. 
The Hardy class $H^2(\bbT)\subset L^2(\bbT)$ is defined in the standard way as
the subspace spanned by the functions $1, \mu, \mu^{2 },\dots$ in $L^2(\bbT)$. 
Let $P_+: L^2(\bbT)\to H^2(\bbT)$ be the orthogonal projection onto $H^2(\bbT)$, 
and let $W$   be  the involution in $L^2(\bbT)$ defined by $(Wf)(\mu)=f(\overline{\mu})$. 
For a function  $\omega \in L^\infty(\bbT)$, the \emph{Hankel operator} 
$H (\omega)$ with the symbol $\omega$  is defined on the Hardy class $H^2(\bbT)$ by the relation
\begin{equation}
H(\omega)f=P_+(\omega Wf), 
\quad 
f\in H^2(\bbT).
\label{a1}
\end{equation}
Background information on the theory of Hankel  operators can be found in  
the books \cite{NK,Peller}.

In this paper, we are interested in self-adjoint Hankel operators. 
Thus, we will always assume that the symbol $\omega$ satisfies the symmetry condition
\begin{equation}
\overline{\omega(\mu)}=\omega(\overline{\mu}), \quad \mu\in \bbT. 
\label{a2}
\end{equation}

It is well known that $H(\omega)$ is bounded if $\omega\in L^\infty(\bbT)$ and that $H(\omega)$ is
compact if $\omega$ is continuous.
Moreover, if $\omega \in C^\infty({\bbT})$, then the  eigenvalues  of $\Hank(\omega)$
go to zero faster than any power of $n^{-1}$ as $n\to\infty$. 
Conversely, the singularities of $\omega(\mu)$ are responsible for the power-like 
decay of eigenvalues or even for the appearance of the continuous spectrum.

The first result in this direction is due to S.~R.~Power \cite{Power}  
who considered the essential spectrum of $\Hank(\omega)$ for piecewise continuous symbols $\omega$. 
The structure of the absolutely continuous spectrum was later described by J.~S.~Howland \cite{Howland1}.
Although the assumptions of \cite{Power} and \cite{Howland1} are slightly different, in both cases
$\omega$ has jump discontinuities on the unit circle; the essential (resp. absolutely continuous) spectrum can be described 
in terms of these jumps.   
It turns out that the contributions of different jumps of $\omega$ (i.e. the jumps located at different points 
of the unit circle) to the essential spectrum are independent of each other. 
We call this fact  \emph{the localization principle}.
Further, observe that under the symmetry condition 
\eqref{a2}, the singularities of $\omega$ can be located (i) at the points $+1$ and $-1$ and (ii)
at pairs $(\zeta,\overline{\zeta})$ of complex conjugate points on the unit circle. 
It turns out that the contributions of the jumps to the essential spectrum in cases (i) and (ii) are qualitatively different. 
More precisely, the jumps  of $\omega(\mu)$ at the points $\mu=\pm 1$ yield the intervals $[0,\kappa_{\pm}]$ 
(with $\kappa_{\pm}$ determined by the size of the jumps) of the essential spectrum, while 
the jumps at each pair $(\zeta,\overline{\zeta})$ of complex conjugate points yield symmetric intervals $[-\kappa,\kappa]$. 
The last assertion is natural to call \emph{the symmetry principle}.

Our goal is to  find the asymptotic formulas for eigenvalues for wide classes of \emph{compact self-adjoint} Hankel operators. To that end, we state both  the localization principle and  the symmetry principle in a rather general setting adapted to the study of the discrete spectrum. In this paper we are interested in Hankel operators with the power-like asymptotics of eigenvalues. For Hankel operators $\Hank(\omega)$ realized in the space $H^2 (\bbT)$ by formula \eqref{a1}, such behavior occurs for symbols $\omega (\mu)$ with logarithmic singularities. 
This means that $\omega (\mu)$  is continuous at singular points $\zeta$  but the rate of convergence
$\omega (\mu)- \omega (\zeta)\to 0$  
as $\mu\to \zeta$ is logarithmic; thus, $\omega$ does not satisfy the H\"older condition with any positive exponent.

Before going into details, we describe another representation for Hankel operators as ``infinite matrices".

\subsection{Matrix Hankel operators}

Let $\omega\in L^\infty(\bbT)$, and let the operator $\Hank(\omega)$  be defined by formula \eqref{a1}. 
The ``matrix elements'' of $\Hank(\omega)$ in the orthonormal basis 
$\{\mu^j\}_{j=0}^\infty$ in $H^2(\bbT)$ are
\begin{equation}
(H(\omega) \mu^j, \mu^k)_{L^2(\bbT)}
=
\wh\omega(j+k), \quad j,k\geq0,
\label{a9}
\end{equation}
where $\wh\omega$ are the Fourier coefficients of $\omega$, 
$$
\wh \omega(j)= \int_{\bbT}  \omega(\mu) \mu^{-j} dm(\mu),
\quad 
j\in\bbZ.
$$
This gives the standard ``matrix representation'' 
for Hankel operators in the space $ \ell^2(\bbZ_+)$.

It will be convenient to introduce some notation related to this representation. 
For a   sequence $\{h(j)\}_{j=0}^\infty$     of complex numbers,
the Hankel operator $\Gank(h)$ in the space $ \ell^2(\bbZ_+)$ is formally defined by the 
``infinite matrix'' $\{h(j+k)\}_{j,k=0}^\infty$, that is,
\begin{equation}
(\Gank (h) u) (j)= \sum_{k=0}^\infty h(j+k) u (k), \quad 
u=\{u(j)\}_{j=0}^\infty\in\ell^2(\bbZ_+).
\label{a11}
\end{equation}
Of course, $\Gank(h)$ is symmetric    if the sequence $h$ is real-valued. 
By \eqref{a9}, a Hankel operator $\Gank(h)$ is unitarily equivalent to $\Hank(\omega)$ 
if and only if
\begin{equation}
\wh\omega(j)=h(j), \quad j\geq0;
\label{a12}
\end{equation}
in this case $\omega$ is called a symbol of $\Gamma(h)$. 
Since \eqref{a12} involves only $j\geq0$, a symbol is not uniquely defined. 
By Nehari's theorem \cite{Nehari}, the operator $\Gank(h)$ is bounded if and only if relation \eqref{a12} is satisfied for some some $\omega\in L^\infty (\bbT)$.

Suppose that, for some $\alpha\geq0$ and  $j\to\infty$, 
\begin{equation}
h(j)=
\biggl({\sf b}_1+{\sf b}_{-1}(-1)^{j}+2  
 \sum_{\ell=1}^L b_\ell \cos(\varphi_{\ell} j - \psi_\ell)
 \biggr)  j^{-1}(\log j)^{-\alpha}+\text{error term},  
\label{eq:AS}
\end{equation}
where $\varphi_{1},\ldots, \varphi_{L}\in (0,\pi)$ are {\it distinct} numbers and $\psi_{1},\ldots, \psi_{L}$ as well as  ${\sf b}_{-1}, {\sf b}_{1}, b_{1}, \ldots, b_{L}$ are arbitrary real numbers. It can be shown that the singular support of the symbol $\omega(\mu)$ of the corresponding 
Hankel operator  $\Gamma (h)$ (without the error term in \eqref{eq:AS}) consists of the points $\pm 1$ (if ${\sf b}_{\pm 1}\neq 0$) and the pairs $(e^{i\varphi_{\ell}},
e^{- i\varphi_{\ell}})$, $\ell=1,\ldots, L$. If $\alpha=0$, then  $\omega(\mu)$  has jumps at these points. This implies 
(see \cite{Howland1} and \cite{PYH}) that the absolutely continuous spectrum of $\Gamma (h)$ equals  
\begin{equation}
\spec_{\text{ac}} (\Gamma (h))= [ 0, \pi {\sf b}_{1} ] \cup  [ 0, \pi {\sf b}_{-1}  ] \cup \bigcup_{ \ell =1}^L  
[ -\pi b_{-\ell}, \pi b_{\ell}   ]
\label{eq:XX4}\end{equation}
(each of the intervals in the right-hand side yields  the absolutely continuous spectrum of multiplicity one). 

In the case $\alpha>0$, the singularities of $\omega(\mu)$  are weaker so that the operators $\Gamma (h)$ are compact, and the decay of their eigenvalues is determined by these singularities.
Let us describe a typical result of this paper. For a compact self-adjoint  operator $\Gamma$, let us
 denote by $\{\lambda_n^+(\Gamma)\}_{n=1}^\infty$ 
the non-increasing sequence of positive eigenvalues of $\Gamma$
and set $\lambda_n^-(\Gamma)=\lambda_n^+(- \Gamma)$. We show that
\begin{equation}
\lambda_n^\pm(\Gamma (h))
=
 a^\pm n^{-\alpha}+o(n^{-\alpha}), \quad n\to\infty,\quad \alpha>0,
\label{a3}
\end{equation}
where
\begin{equation}
a^\pm
=
\varkappa(\alpha)\bigl(({\sf b}_{-1})_\pm^{1/\alpha}+({\sf b}_{1})_\pm^{1/\alpha} 
+\sum_{\ell=1}^L \abs{b_\ell}^{1/\alpha}\bigr)^\alpha,
\label{a19}
\end{equation}
${\sf b}_\pm=(\abs{{\sf b}}\pm {\sf b})/2$,  
and the numerical coefficient $\varkappa(\alpha)$ can be expressed in terms of the Beta function, 
\begin{equation}
\varkappa(\alpha)= 2^{-\alpha}
\pi^{1-2\alpha}
\bigl(B(\tfrac{1}{2\alpha},\tfrac12)\bigr)^\alpha.
\label{a15}
\end{equation}
Thus both  in the theory of  the continuous spectrum (formula \eqref{eq:XX4}) 
and in the theory of the discrete spectrum (formula \eqref{a19}) the contributions of different terms in \eqref{eq:AS} are independent of each other. So it is natural to use the term \emph{the localization principle} for this phenomenon.

Observe that the first two terms in the right-hand side of \eqref{eq:AS} contribute to the leading term of the asymptotics  of
$\lambda_n^+(\Gamma (h))$ (resp. of $\lambda_n^-(\Gamma (h))$) only if the coefficients ${\sf b}_{1}$ or ${\sf b}_{-1}$ are positive  (resp. negative).   
On the other hand, in both  continuous   and  discrete cases, the oscillating terms 
in \eqref{eq:AS} yield symmetric contributions to the continuous spectrum of $\Gamma (h)$ and to the leading term of the asymptotics  of the eigenvalues $\lambda_n^\pm(\Gamma (h))$. 
It is natural to call this phenomenon  \emph{the symmetry principle}.

\subsection{Related work}

This paper can be considered as a continuation of our previous work \cite{II} where Hankel operators $\Gamma (h)$ with matrix elements  \eqref{eq:AS} were studied for $\alpha>0$ under the assumption that 
$b_{1}=\cdots = b_{L}=0$; in this case $h(j)$ does not contain the oscillating terms. 
Extending this to the case of non-zero coefficients 
$b_1,\dots, b_L$ turns out to be a non-trivial problem. 
In fact, we had to isolate and formalise both the localization and the symmetry principles
for compact self-adjoint Hankel operators precisely in order to handle this situation.

We note also our paper \cite{III} where  Hankel operators $\Gamma(h)$ corresponding to the
sequences  
\begin{equation}
h(j)=\biggl(\sum_{\ell=1}^L b_\ell \zeta_\ell^{-j} \biggr) j^{-1}(\log j)^{-\alpha}
+\text{error term}, \quad j\to\infty, \quad \alpha>0,
\label{eq:sv1}
\end{equation}
were considered. Here $\zeta_1,\dots,\zeta_L\in\bbT$ are distinct points
(not necessarily complex conjugate pairs of points)
and $b_1,\dots,b_L$ are any complex coefficients so that the operators $\Gamma(h)$ need not be self-adjoint. 
In \cite{III}, we studied singular values $s_n(\Gamma (h))$ of the operator $\Gamma (h)$ and obtained the asymptotic formula
\begin{equation}
s_n(\Gamma (h))= a \, n^{-\alpha}+o(n^{-\alpha}), \quad n\to\infty,
\label{a3a}
\end{equation}
with the coefficient
$$
a =
 \varkappa(\alpha)\biggl( \sum_{\ell=1}^L \abs{b_\ell}^{1/\alpha}\biggr)^\alpha.
$$
The proof of this assertion in \cite{III} used   our result of \cite{II} and also required 
the localization principle for singular values of Hankel operators. 
This principle allowed us to separate the contributions of different
terms in the right-hand side of \eqref{eq:sv1}. 

 We finally mention the fundamental paper
 \cite{MPT} where the spectra of all bounded self-adjoint Hankel operators were characterized 
 in terms of a certain balance of their positive and negative parts. In particular, the  spectra of compact  Hankel operators were characterized by the two conditions:
(i) the multiplicities of the eigenvalues $\lambda$ and $-\lambda$ do not differ by more than one;  
(ii) if the point $\lambda=0$ is an eigenvalue, then necessarily it has infinite multiplicity. 
The first of these conditions is similar in spirit to the asymptotic formulas \eqref{a3}, \eqref{a19} 
for the eigenvalues, but of course neither of these two results implies the  other one.

\subsection{Main ideas of the approach}

 The results of \cite{II, III} are the basis for our proof of the relations  \eqref{a3},  \eqref{a19}. 
 The other two ingredients are
 the localization and the symmetry principles for eigenvalues. 
 
\emph{The localization principle}  allows us to separate the contributions of different
terms in  the right-hand side of \eqref{eq:AS}. In a more general setting it
 says   that the contributions of disjoint components of 
the singular support of $\omega$ (denoted $\sing\supp\omega$) to the asymptotics of the eigenvalues of  
$\Hank(\omega)$ are independent of each other.
The precise statement is Theorem~\ref{thm.a3} below.
This principle allows one to split the singular support of $\omega$ into 
disjoint pieces and to study the operators corresponding to each piece separately. 
In our previous work \cite{III} we discussed localization principle for singular values of
(not necessarily self-adjoint) Hankel operators. The localization principle for eigenvalues
requires some new operator theoretic input, which is stated here as Theorem~\ref{thm.b2}.

\emph{The symmetry principle} is needed to treat Hankel operators $\Gamma(h)$ with the oscillating matrix elements 
$h (j)= \cos(\varphi  j - \psi)j^{-1}(\log j)^{-\alpha}$. 
 It shows that asymptotically   the sequence $s_n(\Gamma(h))$ of singular values is ``shared equally'' between the sequences  $\lambda^+_n(\Gamma(h))$ and $\lambda^-_n(\Gamma(h))$ of positive and negative eigenvalues.   So given the asymptotic  formula for $s_{n} (\Gamma(h))$  obtained in \cite{III}, we get a formula for $\lambda^\pm_{n} (\Gamma(h))$.
    More generally, the symmetry principle
 says  that if $\sing\supp\omega$ does not contain $1$ and $-1$, then the spectrum of the self-adjoint Hankel operator $H(\omega)$ is asymptotically symmetric with respect to the reflection $\lambda\mapsto-\lambda$.  
 For compact operators $\Gamma(h)$ with such symbols, this means that the leading terms of the asymptotics of   $\lambda^+(\Gamma(h))$ and $\lambda^-(\Gamma(h))$ coincide.
 
 For our purposes, it suffices to consider the case when $\sing\supp\omega$ consists of a finite number of points.
 
\subsection{The structure of the paper}

Along with the  representation in the space  $\ell^2(\bbZ_+)$, 
Hankel operators  can be defined as  integral operators ${\pmb \Gamma} (\bh)$ in $L^2(\bbR_+)$
with  kernels $\bh(t+s)$. 
We will refer to the Hankel operators $\Gamma(h)$ acting in $\ell^2(\bbZ_+)$
as to the discrete representation, and to the Hankel operators  ${\pmb \Gamma} (\bh)$ acting in $L^2(\bbR_+)$
as to the continuous representation. Similarly to the realization of operators $\Gamma(h)$ in the Hardy space $H^2 (\bbT)$ described in Section~1.1, ``continuous" Hankel operators  ${\pmb \Gamma} (\bh)$  can be realized in the Hardy space $H^2 (\bbR)$ of functions analytic in the upper half-plane. We   use boldface font for objects associated with the continuous representation.  We have tried to make exposition in the discrete and continuous cases parallel as much as possible. 

We collect necessary operator theoretic background in Section~2;
the key results of that section are Theorems~\ref{thm.b3} and \ref{lma.c2}.
In Section~\ref{sec.b} we prove the localization principle, see 
Theorems~\ref{thm.a3} and \ref{thm.b6} (one of these theorems refers to the discrete
representation and another one to the continuous one). 
In Section~\ref{sec.c} we prove the symmetry principle, see  Theorems~\ref{thm.a1}
and \ref{thm.c3} (again, those are the discrete and the continuous versions). 

Applications of these general results to Hankel operators
$\Gamma (h)$ and ${\pmb \Gamma} (\bh)$      are given in Sections~5 and 6, respectively. The main result for the operators
$\Gamma (h)$ is stated as Theorem~\ref{thm.a5}. 
Here we prove formulas \eqref{a3}, \eqref{a19} for sequences $h(j)$ with  asymptotics \eqref{eq:AS} as $j\to \infty$.
Moreover, in Section~5.4  we discuss  the version  of Theorem~\ref{thm.a5} for 
Hankel operators  acting on the Hardy space $H^2(\bbT)$.

 The results on the asymptotic behavior of eigenvalues of integral Hankel  operators
 ${\pmb \Gamma} (\bh)$  are stated similarly to the discrete case. 
 However, we have to take into account the fact that $\bh(t)$ may be singular 
 both as $t\to \infty$ and as $t\to 0$. 
 Note that oscillating terms appear for $t\to \infty$  only. 
 Singularities of $\bh(t)$ at points $t_{0} >0$ are also not excluded.
   
\section{Abstract operator theoretic input}

The main results of this section are Theorems~\ref{thm.b3}
and \ref{lma.c2}. They are used   for the proofs of the localization principle in Section~3 and of the symmetry principle in Section~4.

\subsection{Notation}
Here we recall some notation 
related to eigenvalues and singular values of compact operators. 
For a compact self-adjoint operator $A$, we denote by $\{\lambda_n^+(A)\}_{n=1}^\infty$ 
the non-increasing sequence of positive eigenvalues of $A$; we assume that 
the eigenvalues are enumerated with multiplicities and that the sequence 
$\{\lambda_n^+(A)\}_{n=1}^\infty$ is appended by zeros if $A$ has only finitely many 
positive eigenvalues. 
We also set $\lambda_n^-(A)=\lambda_n^+(-A)$. 
The singular values of a (not necessarily self-adjoint) compact operator
$A$ are defined by $s_n(A)=\lambda_n^+(\abs{A})$ 
where $\abs{A}=\sqrt{A^*A}$. 

To describe the  power asymptotics of the type \eqref{a3} and \eqref{a3a},  it is convenient to define the following functionals. 
 For $p>0$, 
and for a compact  operator $A$, we set
\begin{equation}
\Delta_p(A)
=
\limsup_{n\to\infty} ns_n(A)^p,
\quad
\delta_p(A)
=
\liminf_{n\to\infty} ns_n(A)^p.
\label{DD2}
\end{equation}
Moreover, if $A$ is self-adjoint, we denote
\begin{equation}
\Delta_p^\pm(A)
=
\limsup_{n\to\infty} n\lambda_n^\pm(A)^p,
\quad
\delta_p^\pm(A)
=
\liminf_{n\to\infty} n\lambda_n^\pm(A)^p.
\label{DD1}
\end{equation}
Put $A_\pm=(\abs{A}\pm A)/2$ so that $A_{\pm}=\pm\1_{\bbR_{\pm}}(A)A$.
 Since 
$ \lambda_n^\pm(A)= s_{n}(A_{\pm})$ for an arbitrary self-adjoint operator $A$,
 we have  
 \begin{equation}
 \Delta_p^\pm(A)=\Delta_p(A_\pm), \quad  \delta_p^\pm(A)=\delta_p(A_\pm).
 \label{DDD}
\end{equation}
In all concrete applications, our upper limits will coincide with the lower limits;
however, we work with the upper and lower limits separately 
  because it is more general and, at the same time,  it is technically more convenient.

We denote by $\Sch_{p,\infty}$ the class of all compact operators $A$ such that
$\Delta_p(A)$ is finite, and by $\Sch_{p,\infty}^0\subset\Sch_{p,\infty}$
the subclass of all operators $A$ such that $\Delta_p(A)=0$.
It is well known that both $\Sch_{p,\infty}$ and $\Sch_{p,\infty}^0$ are ideals of the algebra of bounded operators $\calB$;  in particular, they are linear spaces. 
Of course $A\in\Sch_{p,\infty}$ (or $A\in\Sch_{p,\infty}^0$) if and only if the same is true for its adjoint $A^*$.
We set $\Sch_0=\cap_{p>0}\Sch_{p,\infty}$, that is, 
\begin{equation}
A\in\Sch_0\quad \Leftrightarrow\quad s_n(A)=O(n^{-\alpha}),\quad n\to\infty,\quad \forall \alpha>0.
\label{eq:sss}\end{equation}

It is convenient to make use of the counting functions
$$ 
n(\lambda;A)=\#\{n: s_n(A)>\lambda\}, \quad n_\pm(\lambda;A)=\#\{n: \lambda_n^\pm(A)>\lambda\},
\quad \lambda>0.
$$
In terms of these functions, we have
\begin{equation}
\Delta_p(A)=\limsup_{\lambda\to0}\lambda^p n(\lambda;A), \quad
\Delta_p^\pm(A)=\limsup_{\lambda\to0}\lambda^p n_\pm(\lambda;A), 
\label{a4c}
\end{equation}
and similarly for the lower limits. 

Asymptotic formulas for     singular values and eigenvalues can be equivalently rewritten in terms of the functionals  \eqref{DD2} and  \eqref{DD1}. We make a 
standing assumption that the indices $\alpha>0$ and $p>0$ 
are related by $p=1/\alpha$. Then
$$
\lim_{n\to\infty }n^\alpha s_{n} (A)= c  \Longleftrightarrow \Delta_p (A)=\delta_p (A)=c ^p 
$$
and
$$
\lim_{n\to\infty }n^\alpha\lambda_{n}^\pm (A)= c^\pm \Longleftrightarrow \Delta_p^\pm(A)=\delta_p^\pm(A)=(c^\pm)^p .
$$
 
Since, for self-adjoint operators $A$,  the sequence $s_n(A)$ is the union of the two sequences
$\lambda^+_{n}(A)$ and $\lambda^-_{n}(A)$, for the counting functions we have
\begin{equation}
n(\lambda;A)=n_+(\lambda;A)+n_-(\lambda;A), \quad \lambda>0.
\label{a4b}
\end{equation}

\subsection{Asymptotically orthogonal operators}

First we recall a lemma which goes back to H.~Weyl.  
\begin{lemma}\label{lma.b1}\cite[Section~11.6]{BSbook}
Let $A$ be a compact operator and let $B\in\Sch_{p,\infty}^0$ for some $p>0$. 
Then 
\begin{equation}
\Delta_p(A+B)=\Delta_p(A)
\quad\text{ and }\quad
\delta_p(A+B)=\delta_p(A).
\label{b0a}
\end{equation}
If $A$ and $B$ are self-adjoint, then also 
$$
\Delta_p^\pm(A+B)=\Delta_p^\pm(A)
\quad\text{ and }\quad
\delta_p^\pm(A+B)=\delta_p^\pm(A).
$$
\end{lemma}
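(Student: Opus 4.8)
The plan is to derive everything from the Ky Fan inequalities for singular values (and, for the $\pm$ statement, their self-adjoint analogue for eigenvalues), rewritten in terms of the counting functions $n(\cdot\,;\cdot)$ and $n_\pm(\cdot\,;\cdot)$. Recall that for compact operators $X,Y$ one has $s_{n+m-1}(X+Y)\le s_n(X)+s_m(Y)$, which translates into the subadditivity estimate
\begin{equation}
n(\lambda_1+\lambda_2;X+Y)\le n(\lambda_1;X)+n(\lambda_2;Y),\qquad \lambda_1,\lambda_2>0.
\label{eq:KF}
\end{equation}
The first step is simply to record \eqref{eq:KF} together with its self-adjoint counterpart $n_\pm(\lambda_1+\lambda_2;X+Y)\le n_\pm(\lambda_1;X)+n(\lambda_2;Y)$, which follows from $\lambda^\pm_{n+m-1}(X+Y)\le\lambda^\pm_n(X)+\lambda^\pm_m(Y)\le\lambda^\pm_n(X)+s_m(Y)$.

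Next, the upper bound. Fix $\eps\in(0,1)$ and apply \eqref{eq:KF} with $X=A$, $Y=B$, $\lambda_1=(1-\eps)\lambda$, $\lambda_2=\eps\lambda$, giving $n(\lambda;A+B)\le n((1-\eps)\lambda;A)+n(\eps\lambda;B)$. Multiplying by $\lambda^p$, writing $\lambda^p=(1-\eps)^{-p}\bigl((1-\eps)\lambda\bigr)^p$ and $\lambda^p=\eps^{-p}(\eps\lambda)^p$, and passing to $\limsup_{\lambda\to0}$ via \eqref{a4c}, I obtain $\Delta_p(A+B)\le(1-\eps)^{-p}\Delta_p(A)+\eps^{-p}\Delta_p(B)$. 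Since $B\in\Sch_{p,\infty}^0$ means $\Delta_p(B)=0$, letting $\eps\to0$ yields $\Delta_p(A+B)\le\Delta_p(A)$. Applying the same inequality with $A$ replaced by $A+B$ and $B$ by $-B$ (which still lies in $\Sch_{p,\infty}^0$ since $s_n(-B)=s_n(B)$) gives the reverse inequality, hence $\Delta_p(A+B)=\Delta_p(A)$.

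For the $\liminf$ statement I instead apply \eqref{eq:KF} with $X=A+B$, $Y=-B$ and the same splitting: this gives $n(\lambda;A)\le n((1-\eps)\lambda;A+B)+n(\eps\lambda;B)$, i.e. $n((1-\eps)\lambda;A+B)\ge n(\lambda;A)-n(\eps\lambda;B)$. Multiplying by $\bigl((1-\eps)\lambda\bigr)^p$ and taking $\liminf_{\lambda\to0}$, using $\liminf(u_\lambda-v_\lambda)\ge\liminf u_\lambda-\limsup v_\lambda$ and $\Delta_p(B)=0$, I get $\delta_p(A+B)\ge(1-\eps)^p\delta_p(A)$; letting $\eps\to0$ and then symmetrising (swap the roles of $A$ and $A+B$) gives $\delta_p(A+B)=\delta_p(A)$. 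The $\pm$ versions follow verbatim from the self-adjoint counting-function inequality, the only change being that the perturbative term is $n(\eps\lambda;B)$, whose contribution is again killed by $\Delta_p(B)=0$.

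The argument is essentially routine once \eqref{eq:KF} is in place, and indeed \cite{BSbook} is cited precisely because this is the classical perturbation lemma for the weak Schatten classes. The only point needing a little care is the bookkeeping with the auxiliary parameter $\eps$ and the order of limits ($\lambda\to0$ first, then $\eps\to0$), together with the correct use of the inequality $\liminf(u-v)\ge\liminf u-\limsup v$ when transferring the negative term $-n(\eps\lambda;B)$ across the $\liminf$; there is no genuine obstacle here.
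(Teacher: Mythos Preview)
Your argument is correct. The paper does not supply its own proof of this lemma; it is simply quoted as a known result from \cite[Section~11.6]{BSbook}. What you have written is precisely the standard proof one finds there: the Ky Fan/Weyl inequality in counting-function form (which the paper itself records later as \eqref{c9}), the $\eps$-splitting of $\lambda$, and the swap $A\leftrightarrow A+B$, $B\leftrightarrow -B$ for the reverse inequalities. There is nothing to compare, and your bookkeeping with $\liminf(u-v)\ge\liminf u-\limsup v$ and the order of limits is clean.
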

Recall the implication 
(see, e.g. \cite[Theorem 11.6.9]{BSbook})
\begin{equation}
A\in\Sch_{p,\infty}, 
\quad 
B\in\Sch_{p,\infty}
\quad 
\Rightarrow
\quad
A^*B\in\Sch_{p/2,\infty}, \quad
A B^*\in\Sch_{p/2,\infty}.
\label{b1}
\end{equation}
We will say that the operators $A$ and $B$ in $\Sch_{p,\infty}$ are asymptotically orthogonal
if the  class $\Sch_{p/2,\infty}$ in the right side of \eqref{b1} can be replaced by its subclass $\Sch_{p/2,\infty}^0$.
\begin{theorem}\label{thm.b2}\cite[Theorem~2.2]{III}
Let $A_1,\dots,A_L$ be compact operators such that for some $p>0$
$$
A_\ell^*A_j\in\Sch_{p/2,\infty}^0,
\quad 
A_\ell A_j^*\in\Sch_{p/2,\infty}^0
\quad 
\text{ for all $\ell\not=j$.}
$$
Then for   $A= A_1+\dots+A_L$, we have
$$
\Delta_p(A)\leq \sum_{\ell=1}^L \Delta_p(A_\ell), 
\quad
\delta_p(A)\geq \sum_{\ell=1}^L \delta_p(A_\ell).
$$
In particular, if $\Delta_p(A_\ell)=\delta_p(A_\ell)$ for all $\ell$, then 
$$
\Delta_p(A_\ell)=\delta_p(A_\ell)=
\sum_{\ell=1}^L\Delta_p(A_\ell).
$$
\end{theorem}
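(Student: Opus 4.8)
The plan is to reduce the assertion about $A$ to the corresponding assertion about the genuinely block-diagonal operator $\diag(A_1A_1^*,\dots,A_LA_L^*)$, for which the counting function is additive by inspection, and to absorb all the ``cross terms'' that appear along the way by Lemma~\ref{lma.b1}. The first move is to pass from $A$ to $A^*A$: since $s_n(T^*T)=s_n(T)^2$ for any compact $T$, the description \eqref{a4c} of the functionals gives $\Delta_{p/2}(T^*T)=\Delta_p(T)$ and $\delta_{p/2}(T^*T)=\delta_p(T)$, so it suffices to compute $\Delta_{p/2}$ and $\delta_{p/2}$ of $A^*A$. Expanding,
\[
A^*A=\sum_{\ell=1}^{L}A_\ell^*A_\ell+E,\qquad E:=\sum_{\ell\ne j}A_\ell^*A_j,
\]
one sees from the hypothesis, together with the fact that $\Sch_{p/2,\infty}^0$ is a linear space, that $E\in\Sch_{p/2,\infty}^0$; moreover $E$ is self-adjoint. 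Hence Lemma~\ref{lma.b1} (with index $p/2$) lets me discard $E$: setting $D:=\sum_\ell A_\ell^*A_\ell\ge0$, I get $\Delta_{p/2}(A^*A)=\Delta_{p/2}(D)$ and $\delta_{p/2}(A^*A)=\delta_{p/2}(D)$ (both operators are nonnegative, so the $\pm$-version of the lemma reduces to its singular-value version).

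The main step is then to analyse $D$ correctly. I would \emph{not} treat $D$ as a sum of positive operators on the underlying space: the Ky Fan type inequalities for such a sum only yield counting-function bounds carrying a spurious scaling factor $L$, which would spoil the sharp constants of the type \eqref{a19}. Instead, introduce the column operator $\mathbf A$ with $\mathbf Ax=(A_1x,\dots,A_Lx)$, so that $\mathbf A^*\mathbf A=D$ while $\mathbf A\mathbf A^*$ is the $L\times L$ operator matrix with blocks $A_\ell A_j^*$. Since $\mathbf A^*\mathbf A$ and $\mathbf A\mathbf A^*$ share the same nonzero singular values with multiplicities, $\Delta_{p/2}(D)=\Delta_{p/2}(\mathbf A\mathbf A^*)$ and likewise for $\delta_{p/2}$. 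Now write $\mathbf A\mathbf A^*=\mathbf D+\mathbf E$, where $\mathbf D=\diag(A_1A_1^*,\dots,A_LA_L^*)$ and $\mathbf E$ is the off-diagonal part; each block of $\mathbf E$ lies in $\Sch_{p/2,\infty}^0$ by hypothesis, so $\mathbf E\in\Sch_{p/2,\infty}^0$ and is self-adjoint, and Lemma~\ref{lma.b1} removes it once more: $\Delta_{p/2}(\mathbf A\mathbf A^*)=\Delta_{p/2}(\mathbf D)$, and similarly for $\delta_{p/2}$. Finally $\mathbf D$ is truly block-diagonal, hence $n(\mu;\mathbf D)=\sum_{\ell=1}^{L}n(\mu;A_\ell A_\ell^*)$ for all $\mu>0$; multiplying by $\mu^{p/2}$, passing to the upper (resp. lower) limit, and using $\Delta_{p/2}(A_\ell A_\ell^*)=\Delta_p(A_\ell)$ (resp. $\delta_{p/2}(A_\ell A_\ell^*)=\delta_p(A_\ell)$) gives $\Delta_{p/2}(\mathbf D)\le\sum_\ell\Delta_p(A_\ell)$ and $\delta_{p/2}(\mathbf D)\ge\sum_\ell\delta_p(A_\ell)$. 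Chaining the equalities $\Delta_p(A)=\Delta_{p/2}(A^*A)=\Delta_{p/2}(D)=\Delta_{p/2}(\mathbf A\mathbf A^*)=\Delta_{p/2}(\mathbf D)$ (and its analogue for $\delta$) yields the two displayed bounds of the theorem; the ``in particular'' assertion is then immediate, since $\delta_p(A_\ell)=\Delta_p(A_\ell)$ for all $\ell$ forces $\sum_\ell\Delta_p(A_\ell)\le\delta_p(A)\le\Delta_p(A)\le\sum_\ell\Delta_p(A_\ell)$.

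The step I expect to be the crux is exactly the passage $\mathbf A^*\mathbf A\rightsquigarrow\mathbf A\mathbf A^*$: this is the manoeuvre that trades the ``overlapping'' sum $\sum_\ell A_\ell^*A_\ell$ for an honestly block-diagonal operator modulo a $\Sch_{p/2,\infty}^0$ remainder, and it is what makes both asymptotic-orthogonality hypotheses enter — the one on $A_\ell^*A_j$ in the first application of Lemma~\ref{lma.b1}, the one on $A_\ell A_j^*$ in the second. Everything else is bookkeeping with the definitions in \eqref{a4c} and the linear-space property of $\Sch_{p/2,\infty}^0$.
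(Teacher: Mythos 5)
The paper itself does not prove this theorem: it is stated as a citation to \cite[Theorem~2.2]{III}, and the paper only remarks that a similar result appears in \cite{BS2} with a quite different proof. So there is no in-text argument to compare against, and the question is simply whether your blind proof is correct — and it is.

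Every step checks out. The reduction $\Delta_p(T)=\Delta_{p/2}(T^*T)$ (and likewise for $\delta_p$) is immediate from $s_n(T^*T)=s_n(T)^2$ and \eqref{a4c}. Expanding $A^*A=\sum_\ell A_\ell^*A_\ell+E$, the cross-term $E$ lies in $\Sch_{p/2,\infty}^0$ by hypothesis and linearity, so Lemma~\ref{lma.b1} (singular-value version, at exponent $p/2$) discards it. The column-operator manoeuvre is the right one: with $\mathbf A x=(A_1x,\dots,A_Lx)$ one has $\mathbf A^*\mathbf A=\sum_\ell A_\ell^*A_\ell$ and $\mathbf A\mathbf A^*=\bigl(A_\ell A_j^*\bigr)_{\ell,j}$, and these share nonzero spectrum, so the $\Delta_{p/2}$, $\delta_{p/2}$ functionals transfer. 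Peeling off the off-diagonal part $\mathbf E\in\Sch_{p/2,\infty}^0$ (this uses the second hypothesis) by Lemma~\ref{lma.b1} leaves the genuine block diagonal $\mathbf D=\diag(A_1A_1^*,\dots,A_LA_L^*)$, for which $n(\mu;\mathbf D)=\sum_\ell n(\mu;A_\ell A_\ell^*)$, and multiplying by $\mu^{p/2}$ and taking $\limsup$ (resp. $\liminf$), which is subadditive (resp. superadditive) over finite sums, gives exactly the claimed inequalities after one more application of $\Delta_{p/2}(A_\ell A_\ell^*)=\Delta_p(A_\ell)$. You correctly identify why a naive Ky Fan/Weyl argument applied directly to $\sum_\ell A_\ell^*A_\ell$ would leave a spurious constant: it is the passage $\mathbf A^*\mathbf A\rightsquigarrow\mathbf A\mathbf A^*$ that turns the overlapping sum on the original space into a block-diagonal operator on $\calH^L$, and it is this step that uses both asymptotic-orthogonality hypotheses, one on each side. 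The ``in particular'' clause follows by squeezing. This is a complete and clean proof.
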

A very similar (only a slightly weaker) result was obtained much earlier in \cite{BS2}; 
note that the   proofs   in \cite{BS2} and \cite{III} are quite different.

Here we need an analogue of this statement for self-adjoint operators.
\begin{theorem}\label{thm.b3}
Let $A_1,\dots,A_L$ be compact self-adjoint operators such that 
for some $p>0$
\begin{equation}
A_\ell A_j\in\Sch_{p/2,\infty}^0
\quad 
\text{ for all $\ell\not=j$.}
\label{b4}
\end{equation}
Then for   $A= A_1+\dots+A_L$, we have
\begin{equation}
\Delta_p^\pm(A)\leq \sum_{\ell=1}^L \Delta_p^\pm(A_\ell), 
\quad
\delta_p^\pm(A)\geq \sum_{\ell=1}^L \delta_p^\pm(A_\ell).
\label{b5}
\end{equation}
In particular, if $\Delta_p^\pm(A_\ell)=\delta_p^\pm(A_\ell)$ for all $\ell$, then 
$$
\Delta_p^\pm(A)=\delta_p^\pm(A)=\sum_{\ell=1}^L \Delta_p^\pm(A_\ell).
$$
\end{theorem}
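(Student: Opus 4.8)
The plan is to reduce the self-adjoint statement to the non-self-adjoint Theorem~\ref{thm.b2} by a standard ``doubling'' trick, namely passing to positive and negative parts. Write $A_\ell = (A_\ell)_+ - (A_\ell)_-$ with $(A_\ell)_\pm \geq 0$, and recall from \eqref{DDD} that $\Delta_p^\pm(A_\ell) = \Delta_p\bigl((A_\ell)_\pm\bigr)$ and $\delta_p^\pm(A_\ell) = \delta_p\bigl((A_\ell)_\pm\bigr)$, and similarly for $A$. The key observation is that for positive operators the hypotheses of Theorem~\ref{thm.b2} simplify: since $(A_\ell)_\pm$ are self-adjoint, the conditions $(A_\ell)_\pm (A_j)_\pm \in \Sch_{p/2,\infty}^0$ and $(A_\ell)_\pm (A_j)_\mp \in \Sch_{p/2,\infty}^0$ are exactly the two conditions needed there (with $A_\ell^*$ replaced by $A_\ell$). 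So the first step is to verify that the hypothesis \eqref{b4} on the products $A_\ell A_j$ implies that all four products $(A_\ell)_{\sigma}(A_j)_{\tau}\in\Sch_{p/2,\infty}^0$ for $\sigma,\tau\in\{+,-\}$ and $\ell\neq j$.

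This implication is the main obstacle, because the map $A \mapsto A_\pm = \pm\1_{\bbR_\pm}(A)A$ is highly nonlinear in $A$, and it is not at all obvious that smallness of $A_\ell A_j$ in $\Sch_{p/2,\infty}^0$ passes to the spectral pieces. The natural route is to write $(A_\ell)_\pm = P_\ell^\pm A_\ell$ where $P_\ell^\pm = \1_{\bbR_\pm}(A_\ell)$ is the spectral projection, so that
\begin{equation*}
(A_\ell)_\sigma (A_j)_\tau = P_\ell^\sigma A_\ell A_j P_j^\tau \cdot(\text{sign corrections}),
\end{equation*}
more precisely $(A_\ell)_\sigma (A_j)_\tau = \sigma\tau\, P_\ell^\sigma (A_\ell A_j) \1_{\bbR_\tau}(A_j)$. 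Since $\Sch_{p/2,\infty}^0$ is an ideal (as recalled in the excerpt, both $\Sch_{p,\infty}$ and $\Sch_{p,\infty}^0$ are ideals of $\calB$), multiplying $A_\ell A_j\in\Sch_{p/2,\infty}^0$ on the left by the bounded operator $P_\ell^\sigma$ and on the right by the bounded operator $\1_{\bbR_\tau}(A_j)$ keeps us inside $\Sch_{p/2,\infty}^0$. So in fact the reduction is clean: \eqref{b4} directly gives all the mixed products in $\Sch_{p/2,\infty}^0$, with no subtlety beyond the ideal property. (One should double-check that $(A_\ell)_\sigma (A_j)_\tau$ equals $\sigma\tau P_\ell^\sigma A_\ell A_j \1_{\bbR_\tau}(A_j)$ as written; this is immediate from $A_\ell P_\ell^\sigma = P_\ell^\sigma A_\ell = \sigma (A_\ell)_\sigma$.)

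With this in hand, the second step is routine. Fix the sign $+$ (the case $-$ is symmetric, or follows by replacing $A$ with $-A$). Apply Theorem~\ref{thm.b2} to the family $B_\ell := (A_\ell)_+$, $\ell=1,\dots,L$: its hypotheses $B_\ell^* B_j = B_\ell B_j \in \Sch_{p/2,\infty}^0$ and $B_\ell B_j^* = B_\ell B_j \in \Sch_{p/2,\infty}^0$ hold by Step~1. Theorem~\ref{thm.b2} then yields $\Delta_p(B_1+\cdots+B_L) \leq \sum_\ell \Delta_p(B_\ell)$ and $\delta_p(B_1+\cdots+B_L)\geq\sum_\ell\delta_p(B_\ell)$. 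The only remaining point is to compare $B_1+\cdots+B_L = \sum_\ell (A_\ell)_+$ with $A_+ = \bigl(\sum_\ell A_\ell\bigr)_+$. They are not equal, but their difference lies in $\Sch_{p/2,\infty}^0 \subset \Sch_{p,\infty}^0$: indeed $A_+ - A = -A_-$ and $(A_\ell)_+ - A_\ell = -(A_\ell)_-$, so
\begin{equation*}
\sum_{\ell}(A_\ell)_+ - A_+ = \Bigl(\sum_\ell A_\ell - A_-\Bigr)\cdot 0 \ \text{— better: } \sum_\ell (A_\ell)_+ - A_+ = A_- - \sum_\ell (A_\ell)_-,
\end{equation*}
and one shows $A_- - \sum_\ell (A_\ell)_- \in \Sch_{p,\infty}^0$ by the same projection-and-ideal argument applied to the identity $A_- = \sum_\ell \1_{\bbR_-}(A)A_\ell$ together with $(A_j)_- = \1_{\bbR_-}(A_j)A_j$ and the fact that cross terms $\1_{\bbR_-}(A)A_\ell - \1_{\bbR_-}(A)\1_{\bbR_-}(A_\ell)A_\ell$ are controlled — actually the cleanest formulation is that $\sum_\ell(A_\ell)_+$ and $A_+$ differ by an operator whose square is a sum of mixed products $(A_\ell)_\sigma(A_j)_\tau$ with $\ell\neq j$, hence in $\Sch_{p/2,\infty}^0$, so the operator itself is in $\Sch_{p,\infty}^0$. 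Then Lemma~\ref{lma.b1} gives $\Delta_p(A_+) = \Delta_p\bigl(\sum_\ell(A_\ell)_+\bigr)$ and $\delta_p(A_+) = \delta_p\bigl(\sum_\ell(A_\ell)_+\bigr)$, and combining with \eqref{DDD} we obtain \eqref{b5}. The final ``in particular'' statement is immediate since $\delta_p^\pm(A)\geq\sum_\ell\delta_p^\pm(A_\ell) = \sum_\ell\Delta_p^\pm(A_\ell)\geq\Delta_p^\pm(A)\geq\delta_p^\pm(A)$, forcing all inequalities to be equalities.
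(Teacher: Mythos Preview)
Your overall strategy matches the paper's exactly: reduce to positive/negative parts via \eqref{DDD}, observe that $(A_\ell)_\sigma(A_j)_\tau\in\Sch_{p/2,\infty}^0$ for $\ell\neq j$ by the ideal property (your Step~1 is correct), apply Theorem~\ref{thm.b2} to the family $\{(A_\ell)_+\}$, and then compare $\sum_\ell(A_\ell)_+$ with $A_+$. The gap is in this last comparison, and it is a real one.

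You assert that $D:=\sum_\ell(A_\ell)_+-A_+$ has the property that $D^2$ is a sum of mixed products $(A_\ell)_\sigma(A_j)_\tau$, $\ell\neq j$. This is not true. Take $L=2$; then $2D=\abs{A_1}+\abs{A_2}-\abs{A_1+A_2}$, and expanding $4D^2$ produces cross terms of the form $\abs{A_1+A_2}\cdot(\abs{A_1}+\abs{A_2})$. The operator $\abs{A_1+A_2}=\bigl((A_1+A_2)^2\bigr)^{1/2}$ is a genuine square root and cannot be written as a polynomial in $A_1,A_2,\abs{A_1},\abs{A_2}$, so these cross terms do not reduce to sums of products $A_\ell A_j$ sandwiched between bounded operators. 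What \emph{is} true (and easy) is that with $M_1=\abs{A_1+A_2}^2=(A_1+A_2)^2$ and $M_0=(\abs{A_1}+\abs{A_2})^2$ one has
\[
M_1-M_0=A_1A_2+A_2A_1-\abs{A_1}\,\abs{A_2}-\abs{A_2}\,\abs{A_1}\in\Sch_{p/2,\infty}^0,
\]
since $\abs{A_1}\,\abs{A_2}=\sign(A_1)A_1A_2\sign(A_2)$. But $D^2=\tfrac14(M_1^{1/2}-M_0^{1/2})^2$ is \emph{not} $M_1-M_0$; you have conflated the two.

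Passing from $M_1-M_0\in\Sch_{p/2,\infty}^0$ to $M_1^{1/2}-M_0^{1/2}\in\Sch_{p,\infty}^0$ is precisely the content of Lemma~\ref{lma.b4}, which rests on the nontrivial Birman--Koplienko--Solomyak estimate. This is the missing ingredient in your argument; the paper isolates it as Lemma~\ref{lma.b4x} and Corollary~\ref{lma.b4xx}. Once you have $(A+B)_\pm-(A_\pm+B_\pm)\in\Sch_{p,\infty}^0$, your remaining steps (Lemma~\ref{lma.b1} followed by Theorem~\ref{thm.b2}) are exactly right.
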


 Again, a version of this theorem can be found in \cite{BS2}; 
here we give a different proof. 
In order to explain the intuition behind this theorem, we observe that
the estimates \eqref{b5} are quite obvious if
 the operators $A_\ell$ are 
\emph{orthogonal} in the sense that 
\begin{equation}
A_j A_\ell=0, \quad \forall j\not=\ell.
\label{a8a}
\end{equation}
Then $A=A_1 +\dots+ A_L$ is a ``block-diagonal"
operator acting in the direct sum 
$\oplus_{\ell=1}^L \overline{\Ran (A_\ell)}$. It follows that
$$
n_\pm(\lambda; A)
=
\sum_{\ell=1}^L n_\pm(\lambda;A_\ell), 
\quad \lambda>0.
$$
Multiplying this by $\lambda^p$, taking $\lim\sup$ (resp. $\lim\inf$) 
as $\lambda\to0$
and recalling the expressions \eqref{a4c} for $\Delta_p^\pm$, $\delta_p^\pm$ 
in terms of the counting functions, 
we obtain the first (resp. the second) inequality in \eqref{b5}. 
Our goal is to replace the trivial condition \eqref{a8a} by a much weaker assumption   \eqref{b4}. 
 
In order to prove Theorem~\ref{thm.b3}, we will need the following 
auxiliary assertions.

\begin{lemma}\label{lma.b4}\cite[Proposition~4]{BS2}
Let $p>0$, and let $M_0$, $M_1$ be bounded non-negative self-adjoint operators  
such that $M_1-M_0\in \Sch_{p/2,\infty}^0$. Then 
$M_1^{1/2}-M_0^{1/2}\in\Sch_{p,\infty}^0$. 
\end{lemma}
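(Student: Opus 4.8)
\emph{Proof proposal.} The plan is to reduce the statement to a singular value bound and then attack it via the integral representation of the square root. First note that $M_0^{1/2},M_1^{1/2}$ are well defined and bounded, that $M_1^{1/2}-M_0^{1/2}$ is compact (approximate $\sqrt{\,\cdot\,}$ uniformly on $[0,\|M_0\|+\|M_1\|]$ by polynomials vanishing at $0$, for which the corresponding operator differences are visibly compact), and that the assertion $M_1^{1/2}-M_0^{1/2}\in\Sch_{p,\infty}^0$ is equivalent to $s_n(M_1^{1/2}-M_0^{1/2})=o(n^{-1/p})$, while the hypothesis reads $s_n(M_1-M_0)=o(n^{-2/p})$. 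After normalising $\|M_0\|,\|M_1\|\le1$, the formula $\sqrt t=\pi^{-1}\int_0^\infty t(t+s)^{-1}s^{-1/2}\,ds$ together with the resolvent identity give, writing $K:=M_1-M_0$,
\begin{equation*}
M_1^{1/2}-M_0^{1/2}=\frac1\pi\int_0^\infty (M_1+s)^{-1}K(M_0+s)^{-1}\sqrt s\,ds .
\end{equation*}

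The contribution of $s\ge1$ is harmless: there the integrand is bounded in $\Sch_{p/2,\infty}$-norm by $\|K\|_{\Sch_{p/2,\infty}}s^{-3/2}$, so this part already lies in $\Sch_{p/2,\infty}^0\subset\Sch_{p,\infty}^0$. The real difficulty is the region $0<s<1$, where pure norm estimates only produce a non-integrable $s^{-3/2}$ singularity and the spectral structure must be used. I would localise dyadically in the spectral variable: set $R_k:=\1_{(4^{-k},4^{-k+1}]}(M_0)$, $R'_l:=\1_{(4^{-l},4^{-l+1}]}(M_1)$ for $k,l\ge0$, and let $R_\infty,R'_\infty$ be the projections onto $\Ker M_0,\Ker M_1$, so that $\sum_k R_k=\sum_l R'_l=I$ and $M_1^{1/2}-M_0^{1/2}=\sum_{k,l}R'_l(M_1^{1/2}-M_0^{1/2})R_k$. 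Commuting $R'_l,R_k$ through the resolvents in the integral representation and using $\|(M_1+s)^{-1}R'_l\|\le(4^{-l}+s)^{-1}$, $\|(M_0+s)^{-1}R_k\|\le(4^{-k}+s)^{-1}$, an elementary evaluation of $\int_0^\infty \sqrt s\,(4^{-l}+s)^{-1}(4^{-k}+s)^{-1}\,ds$ yields the block estimate
\begin{equation*}
\bigl\|R'_l(M_1^{1/2}-M_0^{1/2})R_k\bigr\|_{\Sch_{p/2,\infty}}\;\le\; C\,2^{\min(k,l)}\,\bigl\|R'_l K R_k\bigr\|_{\Sch_{p/2,\infty}} .
\end{equation*}

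The main obstacle is then the summation over $(k,l)$. The weights $2^{\min(k,l)}$ grow, but the families $\{R_k\}$ and $\{R'_l\}$ are each pairwise orthogonal, so the blocks $R'_l(M_1^{1/2}-M_0^{1/2})R_k$ have pairwise orthogonal ranges (across $l$) and co-ranges (across $k$); this is the quantitative form of the asymptotic-orthogonality mechanism behind Theorem~\ref{thm.b2}. Combined with $K\in\Sch_{p/2,\infty}^0$ — which forces the diagonal blocks $R'_kKR_k$ to be small enough (their leading singular values are $O(4^{-k})$) to absorb the factor $2^k$, and controls the off-diagonal blocks through inequalities of the type $\sum_{k,l}\|R'_lKR_k\|_{\Sch_q}^{q}\lesssim\|K\|_{\Sch_q}^{q}$ (equality for $q=2$) — this should give $s_n(M_1^{1/2}-M_0^{1/2})=o(n^{-1/p})$, which is the claim. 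Making this bookkeeping precise in the Lorentz ideal $\Sch_{p,\infty}$ (rather than in the simpler $\Sch_q$) is the delicate part of the argument; alternatively one may simply invoke \cite[Proposition~4]{BS2}.
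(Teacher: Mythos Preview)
Your integral-representation setup is correct and standard; the formula
\[
M_1^{1/2}-M_0^{1/2}=\frac1\pi\int_0^\infty (M_1+s)^{-1}K(M_0+s)^{-1}\sqrt s\,ds
\]
holds, the block estimate with weight $2^{\min(k,l)}$ follows as you say (the integral evaluates to $\pi/(2^{-k}+2^{-l})$), and the operator-norm bound $\|R'_lKR_k\|\le C\,4^{-\min(k,l)}$ is genuine because $K=M_1-M_0$ and $\|M_1R'_l\|,\|M_0R_k\|\le4^{1-\min(k,l)}$.

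But the summation step is a real gap, not bookkeeping. Block orthogonality in the rows and columns does \emph{not} give you a usable $\Sch_{p,\infty}$ estimate for the double sum: the inequality $\sum_{k,l}\|R'_lKR_k\|_{\Sch_q}^q\lesssim\|K\|_{\Sch_q}^q$ that you invoke is exact only at $q=2$ and fails in the direction you need for general $q$, and there is no analogue in the Lorentz ideals $\Sch_{p,\infty}$ that would let you trade the growing weight $2^{\min(k,l)}$ against the decay of the blocks of $K$. Completing this step is essentially equivalent to proving the Birman--Koplienko--Solomyak inequality
\[
\Delta_p^\pm\bigl(M_1^{1/2}-M_0^{1/2}\bigr)\le c(p)\bigl(\Delta_{p/2}^\pm(M_1-M_0)\bigr)^{1/2},
\]
which is precisely the non-trivial input that the paper (following \cite{BS2}) invokes from \cite{BKS}. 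So your outline is not an independent argument but rather a sketch of why the BKS inequality is the heart of the matter. Finally, your closing ``alternatively one may simply invoke \cite[Proposition~4]{BS2}'' is circular: that proposition \emph{is} the lemma you are asked to prove.
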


The proof of this lemma in \cite{BS2} uses a non-trivial estimate of \cite[Theorem~3]{BKS}:
$$
\Delta^\pm_{p} (M_1^{1/2}-M_0^{1/2}) \leq c(p)
 \bigl( \Delta^\pm_{p/2} (M_1 -M_0 )\bigr)^{1/2}.
$$

\begin{lemma}\label{lma.b4x} 
Let $A$, $B$ be self-adjoint operators in $\Sch_{p,\infty}$ such that $A B\in\Sch_{p/2,\infty}^0$. Then 
\begin{equation}
\abs{A+B}-(\abs{A}+\abs{B})\in \Sch_{p,\infty}^0.
\label{8}
\end{equation}
\end{lemma}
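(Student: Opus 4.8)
The plan is to reduce the statement to Lemma~\ref{lma.b4} by passing to squares. Set $M_1=(A+B)^2$ and $M_0=A^2+B^2$; both are bounded non-negative self-adjoint operators, and $\abs{A+B}=M_1^{1/2}$. The difference is
\[
M_1-M_0=(A+B)^2-(A^2+B^2)=AB+BA,
\]
which lies in $\Sch_{p/2,\infty}^0$ by hypothesis (recall this ideal is closed under taking adjoints, so $BA=(AB)^*\in\Sch_{p/2,\infty}^0$, and it is a linear space). Hence Lemma~\ref{lma.b4} gives
\[
\abs{A+B}-(A^2+B^2)^{1/2}\in\Sch_{p,\infty}^0.
\]

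It remains to compare $(A^2+B^2)^{1/2}$ with $\abs{A}+\abs{B}$. For this I would apply Lemma~\ref{lma.b4} a second time, now with $M_1'=(\abs{A}+\abs{B})^2$ and $M_0'=A^2+B^2=\abs{A}^2+\abs{B}^2$, both non-negative self-adjoint. Their difference is $\abs{A}\abs{B}+\abs{B}\abs{A}$, so I need this to lie in $\Sch_{p/2,\infty}^0$. This is the point where a little work is needed: the hypothesis is on $AB$, not on $\abs{A}\abs{B}$. Writing $A=U\abs{A}$ and $B=V\abs{B}$ with partial isometries $U,V$ (polar decomposition), one has $\abs{A}\abs{B}=U^*A\,B V^*=U^*(AB)V^*$, and since $\Sch_{p/2,\infty}^0$ is an ideal, $\abs{A}\abs{B}\in\Sch_{p/2,\infty}^0$; the other term is its adjoint. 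Thus Lemma~\ref{lma.b4} applies and yields $(\abs{A}+\abs{B})-(A^2+B^2)^{1/2}\in\Sch_{p,\infty}^0$.

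Combining the two memberships, since $\Sch_{p,\infty}^0$ is a linear space,
\[
\abs{A+B}-(\abs{A}+\abs{B})
=\bigl(\abs{A+B}-(A^2+B^2)^{1/2}\bigr)+\bigl((A^2+B^2)^{1/2}-(\abs{A}+\abs{B})\bigr)\in\Sch_{p,\infty}^0,
\]
which is \eqref{8}. The main obstacle, such as it is, is the bookkeeping in the second application: translating the hypothesis $AB\in\Sch_{p/2,\infty}^0$ into the statement $\abs{A}\abs{B}\in\Sch_{p/2,\infty}^0$ via polar decompositions, together with keeping track that $\Sch_{p/2,\infty}^0$ is a two-sided ideal closed under adjoints. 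Everything else is a direct invocation of Lemma~\ref{lma.b4} and the linearity of the ideals $\Sch_{p,\infty}^0$.
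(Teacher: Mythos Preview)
Your proof is correct and follows essentially the same approach as the paper: reduce to Lemma~\ref{lma.b4} by squaring, using that $\abs{A}\abs{B}=\sign(A)\,AB\,\sign(B)$ lies in the ideal $\Sch_{p/2,\infty}^0$. The only difference is that the paper applies Lemma~\ref{lma.b4} once, comparing $\abs{A+B}^2$ directly with $(\abs{A}+\abs{B})^2$, whereas you pass through the intermediate $(A^2+B^2)^{1/2}$ and invoke the lemma twice.
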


\begin{proof}
Since 
\begin{equation}
\abs{A+B}^2-(\abs{A}+\abs{B})^2= AB+BA- |A| |B|+| B| |A|
\label{9}
\end{equation}
and
$
\abs{A}\abs{B}=\sign(A)AB\sign(B),
$
expression \eqref{9} belongs to $\Sch_{p/2,\infty}^0$.
So it remains to apply Lemma~\ref{lma.b4} with $M_0=(\abs{A}+\abs{B})^2$, $M_1=\abs{A+B}^2$.
\end{proof}

Adding and subtracting $A+B$ in \eqref{8}, we obtain

\begin{corollary}\label{lma.b4xx} 
Under the assumptions of Lemma~$\ref{lma.b4x}$, the inclusions 
\begin{equation}
(A+B)_\pm -(A_\pm +B_\pm)\in \Sch_{p,\infty}^0
\label{14}
\end{equation}
hold.
\end{corollary}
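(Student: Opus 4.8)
The plan is immediate once one writes out both sides of \eqref{14} using the definition $C_\pm=(\abs{C}\pm C)/2$. First I would note that $2(A+B)_\pm=\abs{A+B}\pm(A+B)$, while $2(A_\pm+B_\pm)=(\abs{A}\pm A)+(\abs{B}\pm B)=(\abs{A}+\abs{B})\pm(A+B)$. Subtracting, the linear terms $\pm(A+B)$ cancel identically, so that
\[
2\bigl((A+B)_\pm-(A_\pm+B_\pm)\bigr)=\abs{A+B}-(\abs{A}+\abs{B}).
\]
By Lemma~\ref{lma.b4x} the right-hand side belongs to $\Sch_{p,\infty}^0$, and since $\Sch_{p,\infty}^0$ is a linear space the same is true for the left-hand side, which is \eqref{14}.

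There is essentially no obstacle to overcome in this step: all of the analytic content has already been placed in Lemma~\ref{lma.b4x} (and, behind it, in Lemma~\ref{lma.b4} and the estimate of \cite{BKS}). The only point requiring a little care is that one must handle the two signs simultaneously, but since the cancellation of $\pm(A+B)$ occurs in the same way for both choices of sign, a single computation settles both inclusions at once.
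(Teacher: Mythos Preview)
Your proof is correct and is exactly the argument the paper has in mind: the paper simply says ``Adding and subtracting $A+B$ in \eqref{8}, we obtain'' the corollary, which is precisely your cancellation $2\bigl((A+B)_\pm-(A_\pm+B_\pm)\bigr)=\abs{A+B}-(\abs{A}+\abs{B})$.
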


\subsection{Proof of Theorem~\ref{thm.b3}}

Using induction in $L$, it is easy to reduce the problem to the case $L=2$. 
Thus, changing our notation slighly, we will assume that 
$A$, $B$ are self-adjoint operators in $\Sch_{p,\infty}$ and $A B\in\Sch_{p/2,\infty}^0$.
We will prove that 
\begin{align}
\Delta_p^\pm(A+B)&\leq \Delta_p^\pm(A)+\Delta_p^\pm(B),
\label{7}
\\
\delta_p^\pm(A+B)&\geq \delta_p^\pm(A)+\delta_p^\pm(B).
\label{7a}
\end{align}

 According to the statement \eqref{b0a} of 
Lemma~\ref{lma.b1}, it follows from \eqref{14}  that
\begin{equation}
\Delta_p((A+B)_\pm)
=
\Delta_p(A_\pm+B_\pm).
\label{12}
\end{equation}
Since $A_\pm B_\pm \in \Sch_{p/2,\infty}^0$,   Theorem~\ref{thm.b2} implies that 
\begin{equation}
\Delta_p(A_\pm+B_\pm)
\leq
\Delta_p(A_\pm)+\Delta_p(B_\pm).
\label{13}
\end{equation}
Combining \eqref{12}, \eqref{13} and taking   \eqref{DDD} into account, we conclude the proof of \eqref{7}. 
  The estimate \eqref{7a} for the lower limits  can be obtained in a similar way. 
$\qed$

\subsection{Symmetry with respect to the reflection around zero}

The meaning of the following result is that if a compact self-adjoint operator $A$ is ``almost" unitarily equivalent to $-A$, then its positive and negative eigenvalues have the same asymptotic behaviour. 

\begin{theorem}\label{lma.c2}
Let $p>0$, and let $A$ be a compact self-adjoint operator such that for some 
unitary operator $U$, we have
\begin{equation}
R:=UAU^*+A\in\Sch_{p,\infty}^0.
\label{c6}
\end{equation}
Then 
\begin{align}
\Delta_p^+(A)&=\Delta_p^-(A)=\tfrac12\Delta_p(A),
\label{c7}
\\
\delta_p^+(A)&=\delta_p^-(A)=\tfrac12\delta_p(A).
\label{c7aa}
\end{align}
\end{theorem}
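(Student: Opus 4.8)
The plan is to transfer the hypothesis on $A$ to the positive and negative parts $A_\pm$ and then invoke the relation $n(\lambda;A)=n_+(\lambda;A)+n_-(\lambda;A)$ together with the ``almost unitary equivalence'' to show that $n_+$ and $n_-$ asymptotically split $n$ in half. First I would observe that since $U$ is unitary, $UAU^*$ is self-adjoint and compact, and the functional calculus commutes with unitary conjugation, so $(UAU^*)_\pm = U A_\pm U^*$. Writing $R = UAU^*+A$, the first key step is to deduce from \eqref{c6} that
\begin{equation}
U A_+ U^* + A_- \in \Sch_{p,\infty}^0
\quad\text{and}\quad
U A_- U^* + A_+ \in \Sch_{p,\infty}^0.
\label{pp1}
\end{equation}
This should follow from Corollary~\ref{lma.b4xx} applied with the pair $(UAU^*, A)$: indeed $A(UAU^*)$ need not be small, so instead I would argue via $|UAU^*+A|=|R|\in\Sch_{p,\infty}^0$, hence $(UAU^*+A)_\pm\in\Sch_{p,\infty}^0$; combining this with Corollary~\ref{lma.b4xx} for the operators $UAU^*$ and $A$ (whose product is not controlled) does not directly work, so the cleaner route is: from $UAU^*=-A+R$ and $R\in\Sch_{p,\infty}^0$ conclude by Lemma~\ref{lma.b1} that $(UAU^*)_\pm$ and $(-A+R)_\pm=(-A)_\pm$ have the same $\Delta_p,\delta_p$; since $(-A)_+=A_-$ and $(-A)_-=A_+$, and $(UAU^*)_\pm = U A_\pm U^*$, we get that $U A_+ U^*$ and $A_-$ have equal $\Delta_p$ and $\delta_p$, and likewise with the roles swapped. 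Because $\Delta_p$ and $\delta_p$ are unitarily invariant, this says
\begin{equation}
\Delta_p^+(A)=\Delta_p(A_+)=\Delta_p(U A_+ U^*)=\Delta_p(A_-)=\Delta_p^-(A),
\label{pp2}
\end{equation}
and similarly $\delta_p^+(A)=\delta_p^-(A)$.

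The second step is to combine this with the counting-function identity \eqref{a4b}. Multiplying \eqref{a4b} by $\lambda^p$ and passing to the $\limsup$ as $\lambda\to0$ gives, by subadditivity of $\limsup$,
\begin{equation}
\Delta_p(A)\leq \Delta_p^+(A)+\Delta_p^-(A)=2\Delta_p^+(A),
\label{pp3}
\end{equation}
while passing to the $\liminf$ and using superadditivity of $\liminf$ gives
\begin{equation}
\delta_p(A)\geq \delta_p^+(A)+\delta_p^-(A)=2\delta_p^+(A).
\label{pp4}
\end{equation}
For the reverse inequalities I would use instead that $\limsup$ of a sum is at least the $\limsup$ of either summand when the other has a limit along a suitable subsequence — more precisely, fix a subsequence $\lambda_k\to0$ realizing $\Delta_p^+(A)=\lim_k \lambda_k^p n_+(\lambda_k;A)$; along a further subsequence $\lambda_k^p n_-(\lambda_k;A)$ converges to some $c\le \Delta_p^-(A)=\Delta_p^+(A)$, so $\Delta_p(A)\ge \Delta_p^+(A)+c$. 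This is not quite enough by itself, so the symmetric argument starting from a subsequence realizing $\Delta_p^-(A)$ is also needed, and then one plays the two off against each other; alternatively, and more robustly, one notes that for the standing applications $\Delta_p^\pm=\delta_p^\pm$, whence $\lim_\lambda \lambda^p n_\pm(\lambda;A)$ exists and equals $\Delta_p^\pm(A)$, so \eqref{a4b} gives $\Delta_p(A)=\delta_p(A)=2\Delta_p^+(A)$ directly. In the general case, one shows $\Delta_p(A)\ge \Delta_p^+(A)+\delta_p^-(A)$ and $\delta_p(A)\le \Delta_p^+(A)+\delta_p^-(A)$ by choosing subsequences, and since $\delta_p^-(A)=\delta_p^+(A)$ one deduces $\delta_p(A)\le \Delta_p^+(A)+\delta_p^+(A)\le \Delta_p(A)$; combined with \eqref{pp3}--\eqref{pp4} and $\Delta^\pm=\Delta^+$, $\delta^\pm=\delta^+$ this pins down $\Delta_p(A)=2\Delta_p^+(A)$ and $\delta_p(A)=2\delta_p^+(A)$, which is \eqref{c7}--\eqref{c7aa}.

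The main obstacle I anticipate is Step~1: the hypothesis \eqref{c6} is a statement about $A$ itself, whereas the conclusion is about $A_+$ and $A_-$, and the naive attempt to apply Corollary~\ref{lma.b4xx} to the pair $(UAU^*,A)$ fails because the product $(UAU^*)A$ is not assumed to lie in $\Sch_{p/2,\infty}^0$. The resolution is to avoid splitting the sum and instead exploit that $|UAU^*+A|$ is directly small, using the identity $\lambda_n^\pm(A)=s_n(A_\pm)$ from \eqref{DDD} together with unitary invariance of all the functionals, as sketched around \eqref{pp2}; this sidesteps Lemma~\ref{lma.b4x} entirely. A secondary technical point is the careful bookkeeping with $\limsup$/$\liminf$ of sums in Step~2, where one must be mindful that $\Delta_p(A)=\limsup_\lambda \lambda^p n(\lambda;A)$ need not decompose additively unless one works along common subsequences, but this is routine once the symmetry $\Delta_p^+(A)=\Delta_p^-(A)$ from Step~1 is in hand.
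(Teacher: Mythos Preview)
Your Step~1 is fine and matches the paper: from $UAU^*=-A+R$, Lemma~\ref{lma.b1} plus unitary invariance gives $\Delta_p^+(A)=\Delta_p^-(A)$ and $\delta_p^+(A)=\delta_p^-(A)$.

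The gap is in Step~2, the ``half'' equalities. Your subsequence bookkeeping only yields
\[
\Delta_p^+(A)+\delta_p^+(A)\le \Delta_p(A)\le 2\Delta_p^+(A),
\qquad
2\delta_p^+(A)\le \delta_p(A)\le \Delta_p^+(A)+\delta_p^+(A),
\]
and these do \emph{not} pin down $\Delta_p(A)=2\Delta_p^+(A)$ or $\delta_p(A)=2\delta_p^+(A)$: take for instance $\Delta_p^\pm(A)=2$, $\delta_p^\pm(A)=1$, which leaves $\Delta_p(A)\in[3,4]$ and $\delta_p(A)\in[2,3]$. The fallback ``in the standing applications $\Delta_p^\pm=\delta_p^\pm$'' is not available, since the theorem is stated in full generality and is later applied precisely as a black box.

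The missing idea is that the hypothesis \eqref{c6} must be used a \emph{second} time, at the level of counting functions rather than just asymptotic functionals. The paper writes
\[
n(\lambda;A)=n_+(\lambda;A)+n_-(\lambda;A)=n_+(\lambda;A)+n_-(\lambda;UAU^*)=n_+(\lambda;A)+n_+(\lambda;A-R),
\]
turning $n_-$ into $n_+$ of a perturbation of $A$. Then the Ky~Fan/Weyl inequality $n_+(\lambda_1+\lambda_2;A_1+A_2)\le n_+(\lambda_1;A_1)+n_+(\lambda_2;A_2)$ with $\lambda_2=\eps\lambda$ gives two-sided bounds of the form $n(\lambda;A)\gtrless 2n_+((1\pm\eps)\lambda;A)\mp n_+(\eps\lambda;\pm R)$; since $R\in\Sch_{p,\infty}^0$, the error terms vanish after multiplying by $\lambda^p$ and passing to the limit. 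This is what forces $\Delta_p(A)\ge 2\Delta_p^+(A)$ and $\delta_p(A)\le 2\delta_p^+(A)$, and it cannot be recovered from the equalities $\Delta_p^+=\Delta_p^-$, $\delta_p^+=\delta_p^-$ alone.
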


\begin{proof}
The first equalities in \eqref{c7}, \eqref{c7aa} are easy to check. 
Indeed, according to  Lemma~\ref{lma.b1} it follows  from \eqref{c6} that
\begin{align*}
\Delta_p^+(A)&=\Delta_p^+(UAU^*)=\Delta_p^+(-A)=\Delta_p^-(A),
\\
\delta_p^+(A)&=\delta_p^+(UAU^*)=\delta_p^+(-A)=\delta_p^-(A).
\end{align*}

The second pair of equalities is   more delicate. Let us multiply  \eqref{a4b}  by $\lambda^p$. Passing to the upper limit
as $\lambda\to 0$ and using the definition \eqref{a4c} of the quantities $\Delta_p$, we see that
\begin{equation}
\Delta_p(A)\leq \Delta_p^+(A)+\Delta_p^-(A)= 2 \Delta_p^+(A).
\label{De}
\end{equation}
Similarly,  passing to the lower limit, we see that
$$
\delta_p(A)\geq \delta_p^+(A)+\delta_p^-(A)= 2 \delta_p^+(A).
$$

It remains  to prove the opposite
estimates.  In view of  \eqref{a4b} for all $\lambda>0$, we have 
\begin{multline}
n(\lambda;A)
=
n_+(\lambda;A)+n_-(\lambda;A)
=
n_+(\lambda;A)+n_-(\lambda;UAU^*)
\\
=
n_+(\lambda;A)+n_-(\lambda;-A+R)
=
n_+(\lambda;A)+n_+(\lambda;A-R).
\label{c8}
\end{multline}
For any compact self-adjoint $A_1$, $A_2$ we have 
the inequality (see e.g. \cite[Theorem 9.2.9]{BSbook})
\begin{equation}
n_+(\lambda_1+\lambda_2;A_1+A_2)
\leq
n_+(\lambda_1;A_1)
+
n_+(\lambda_2;A_2),
\quad \lambda_1,\lambda_2>0.
\label{c9}
\end{equation}
In particular,  for every
 $\eps\in(0,1)$ 
 $$
 n_+( (1+\eps)\lambda ;A)\leq n_+(\lambda ;A-R) +n_+(\varepsilon\lambda ; R)
 $$
 and therefore \eqref{c8} yields the estimate
 $$
 n(\lambda;A)\geq n_+(\lambda;A)+ n_+((1+\varepsilon)\lambda;A)- n_+(\varepsilon\lambda ; R)
 \geq 2n_+((1+\varepsilon)\lambda;A)- n_+(\varepsilon\lambda ;R).
 $$
 Let us multiply  this estimate  by $\lambda^p$ and pass to the upper limit as $\lambda\to 0$. Since $R \in\Sch_{p,\infty}^0$, we have $\lambda^p n_+(\varepsilon\lambda ; R)\to 0$. Therefore according to  definition \eqref{a4c}, we find that
 $$
\Delta_p(A)\geq 2 (1+\varepsilon)^{-p} \Delta_p^+ (A).
$$
Hence $\Delta_p(A)\geq 2   \Delta_p^+ (A)$ because $\varepsilon>0$ is arbitrary. Together with \eqref{De}, this  proves 
 \eqref{c7}.

Similarly,  
\eqref{c9} implies  that for every
 $\eps\in(0,1)$ 
 $$
 n_+(  \lambda ;A-R)\leq n_+((1-\varepsilon)\lambda ;A) +n_+(\varepsilon\lambda ;-R)
 $$
and  therefore \eqref{c8} yields the estimate
 $$
 n(\lambda;A) \leq n_+( \lambda;A) +  n_+((1-\varepsilon)\lambda;A) + n_+(\varepsilon\lambda ; -R) \leq   2n_+((1-\varepsilon)\lambda;A) + n_+(\varepsilon\lambda ; -R).
 $$
  Let us again multiply it  by $\lambda^p$ and then pass to the lower limit whence
  $$
\delta_p(A)\leq 2 (1-\varepsilon)^{-p} \delta_p^+ (A).
$$
This suffices to conclude the proof of \eqref{c7aa}.
\end{proof}

\section{Localization principle}\label{sec.b}

The localization principle for eigenvalues of Hankel operators in the spaces $H^2(\bbT)$ and $H^2 (\bbR)$ will be stated in Theorems~\ref{thm.a3} and \ref{thm.b6}, respectively. 

\subsection{Hankel operators in  $H^2 (\bbT)$.}

In a standard way, we define   the singular support $\sing\supp\omega$ of a function 
$\omega\in L^\infty (\bbT)$   as the smallest closed set $X\subset \bbT$ 
such that $\omega\in C^\infty (\bbT\setminus X)$. 
Recall also  that the class $\Sch_0$ of compact operators was defined by \eqref{eq:sss}.

The key analytic ingredient of the proof of Therem~\ref{thm.a3} is the following statement.

\begin{lemma}\label{lma.b5}\cite[Lemma~2.6]{III}
Let $\omega_1,\omega_2\in L^\infty (\bbT)$ be such that 
$\sing\supp\omega_1\cap\sing\supp\omega_2=\varnothing$. 
Then 
$$
\Hank(\omega_1)^*\Hank(\omega_2)\in\Sch_0.
$$
\end{lemma}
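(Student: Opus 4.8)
The plan is to reduce the statement about $\Hank(\omega_1)^*\Hank(\omega_2)$ to a statement about products of Hankel operators with nicely separated singular supports, and then to exploit the algebraic identity relating such products to Hankel and Toeplitz operators. First I would recall the classical commutation relation: for $\omega_1,\omega_2\in L^\infty(\bbT)$ one has $\Hank(\omega_1)^*=\Hank(\wt\omega_1)$ for a suitable reflected symbol $\wt\omega_1$ (using the symmetry of the Hankel structure), and the product $\Hank(\wt\omega_1)\Hank(\omega_2)$ can be written, via the standard Hankel--Toeplitz calculus on $H^2(\bbT)$, in the form $T(\wt\omega_1\,\omega_2^{\,\flat}) - T(\wt\omega_1)T(\omega_2^{\,\flat})$-type expressions, i.e. a semicommutator of Toeplitz operators with symbols built from $\omega_1$ and $\omega_2$. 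The key point is that all the symbols entering the resulting semicommutators are smooth away from $\sing\supp\omega_1\cup\sing\supp\omega_2$, and $\sing\supp\omega_1$ and $\sing\supp\omega_2$ are \emph{disjoint} closed sets.

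The next step is a localization/partition-of-unity argument. Choose $\chi_1,\chi_2\in C^\infty(\bbT)$ with $\chi_j=1$ near $\sing\supp\omega_j$ and $\supp\chi_1\cap\supp\chi_2=\varnothing$; then $\omega_j=\chi_j\omega_j+(1-\chi_j)\omega_j$, where $(1-\chi_j)\omega_j\in C^\infty(\bbT)$. Since a Hankel operator with $C^\infty$ symbol lies in $\Sch_0$ (as stated in the introduction, eigenvalues decay faster than any power), and $\Sch_0$ is an ideal, adding such smooth pieces changes $\Hank(\omega_1)^*\Hank(\omega_2)$ only by an element of $\Sch_0$. So it suffices to prove $\Hank(\chi_1\omega_1)^*\Hank(\chi_2\omega_2)\in\Sch_0$, i.e. we may assume $\omega_1,\omega_2\in L^\infty(\bbT)$ have disjoint \emph{supports} (not merely disjoint singular supports). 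Then in the Hankel--Toeplitz calculus the relevant products of symbols vanish identically, so the semicommutator collapses to a product of Toeplitz (or Hankel) operators with symbols having disjoint supports; the well-known smoothing effect of such products — the associated integral kernel is smooth, or equivalently a Hankel operator whose symbol has disjoint singular-support pieces factors through operators with rapidly decaying matrix entries — forces the product into $\Sch_0$.

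The main obstacle, and the place where the real work lies, is making the last smoothing step quantitative: showing that when $\supp\omega_1\cap\supp\omega_2=\varnothing$ the operator $\Hank(\omega_1)^*\Hank(\omega_2)$ has singular values decaying faster than any $n^{-\alpha}$. The cleanest route is to pass to the matrix representation: $\Hank(\chi_1\omega_1)^*\Hank(\chi_2\omega_2)$ has matrix entries that are convolutions of the Fourier coefficients of the two symbols, and the disjointness of supports on $\bbT$ translates (after multiplying by cutoffs) into rapid decay of the relevant sums; alternatively one integrates by parts repeatedly in the kernel $\int\int \omega_1(\mu)\,\overline{\omega_2(\nu)}\,K(\mu,\nu)\,dm\,dm$ using that the singularity of the Cauchy/Szegő-type kernel $K$ is confined to the diagonal $\mu=\bar\nu$, which the disjoint supports avoid. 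Each integration by parts gains a power, so the kernel is $C^\infty$ and hence the operator is in every Schatten class, in particular in $\Sch_0$. I would then assemble these pieces: reflect to turn the adjoint into a Hankel operator, split by smooth cutoffs to reduce to disjoint supports, and invoke the kernel-smoothness estimate; since $\Sch_0$ is a two-sided ideal, the finitely many correction terms (all containing a $C^\infty$-symbol Hankel factor) are absorbed, completing the proof.
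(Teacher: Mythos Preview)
The paper does not prove this lemma itself; it is quoted from \cite[Lemma~2.6]{III}. Your plan is essentially correct, and the reduction to disjoint supports via a partition of unity (using that $H(\sigma)\in\Sch_0$ whenever $\sigma\in C^\infty(\bbT)$) is exactly the right first move.

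That said, the detour through the Hankel--Toeplitz semicommutator identity is unnecessary. The argument actually used in \cite{III}, and the one you can see at work in the paper's own proof of Lemma~\ref{lma.c1}, is more direct. After reducing to $\supp\omega_1\cap\supp\omega_2=\varnothing$, write
\[
H(\omega_1)^*H(\omega_2)=P_+W\,\overline{\omega_1}\,P_+\,\omega_2\,WP_+,
\]
pick a real $\varphi\in C^\infty(\bbT)$ with $\varphi\omega_1=\omega_1$ and $\varphi\omega_2=0$, and note that
\[
\overline{\omega_1}\,P_+\,\omega_2
=\overline{\omega_1}\,\varphi P_+\,\omega_2
=\overline{\omega_1}\,[\varphi,P_+]\,\omega_2.
\]
Since $[\varphi,P_+]\in\Sch_0$ (fact (ii) in the proof of Lemma~\ref{lma.c1}) and $\Sch_0$ is an ideal, the conclusion is immediate. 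Your smooth-kernel/integration-by-parts step is precisely what \emph{proves} this commutator fact (the Szeg\H{o} kernel's diagonal singularity is cancelled by $\varphi(\mu)-\varphi(\nu)$), so the two routes are analytically the same; the commutator formulation just packages it cleanly and avoids tracking the reflections $\widetilde\omega_1$, $\omega_2^\flat$.

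Two small corrections. First, the Szeg\H{o} kernel of $P_+$ is singular on the true diagonal $\mu=\nu$, not on $\mu=\bar\nu$. Second, in your Toeplitz route, once the product of symbols vanishes you are left with $-T(\widetilde\omega_1)T(\omega_2^\flat)$; a product of Toeplitz operators with disjoint-support $L^\infty$ symbols is not ``obviously'' in $\Sch_0$ --- to see it you must again insert a smooth cutoff and use $[\varphi,P_+]\in\Sch_0$, so the detour does not save any work.
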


Below we state the localization principle for eigenvalues of self-adjoint Hankel operators.
We use the functionals $\Delta_p^\pm$ and $\delta_p^\pm$, defined by formulas \eqref{DD1}.
\begin{theorem}\label{thm.a3}
Let $\omega_1,\dots,\omega_L\in L^\infty(\bbT)$ be symbols 
such that the singular supports of $\omega_\ell$ are disjoint:
$$
\sing\supp\omega_\ell\cap\sing\supp\omega_j=\varnothing,
\quad \ell\not=j,
$$
and such that  the symmetry condition \eqref{a2} is satisfied. 
Then for $\omega=\omega_1+\cdots+\omega_L$
and for an arbitrary $p>0$, we have
\begin{equation}
\Delta_p^\pm(H(\omega))\leq\sum_{\ell=1}^L\Delta_p^\pm(H(\omega_\ell)),
\quad
\delta_p^\pm(H(\omega))\geq\sum_{\ell=1}^L\delta_p^\pm(H(\omega_\ell)).
\label{a8}
\end{equation}
In particular, if 
$\Delta_p^\pm(H(\omega_\ell))=\delta_p^\pm(H(\omega_\ell))$
for all $\ell$, then 
$$
\Delta_p^\pm(H(\omega))=\delta_p^\pm(H(\omega))=\sum_{\ell=1}^L\Delta_p^\pm(H(\omega_\ell)).
$$
\end{theorem}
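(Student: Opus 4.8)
The plan is to deduce Theorem~\ref{thm.a3} from the abstract Theorem~\ref{thm.b3} by verifying its hypothesis \eqref{b4} for the family $A_\ell = H(\omega_\ell)$. First I would note that each $A_\ell = H(\omega_\ell)$ is self-adjoint: the symmetry condition \eqref{a2} on $\omega_\ell$ guarantees that the Hankel operator $H(\omega_\ell)$ is self-adjoint (this is the standard fact recalled in Section~1.1 via the involution $W$), and by linearity $H(\omega) = \sum_\ell H(\omega_\ell)$ with $\omega = \sum_\ell \omega_\ell$.

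Next, the key point is to check the mixed product condition $A_\ell A_j \in \Sch_{p/2,\infty}^0$ for $\ell \ne j$. Since $A_j = H(\omega_j)$ is self-adjoint, $A_\ell A_j = H(\omega_\ell) H(\omega_j) = H(\omega_\ell) H(\omega_j)^* $. Wait — more carefully, self-adjointness gives $H(\omega_j) = H(\omega_j)^*$, so $A_\ell A_j = H(\omega_\ell)^* H(\omega_j)^{**}$... the cleanest route is: by Lemma~\ref{lma.b5}, since $\sing\supp\omega_\ell \cap \sing\supp\omega_j = \varnothing$, we have $H(\omega_\ell)^* H(\omega_j) \in \Sch_0$. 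Because each $A_j$ is self-adjoint, $A_\ell A_j = H(\omega_\ell) H(\omega_j) = (H(\omega_\ell)^*)^* H(\omega_j)$; applying Lemma~\ref{lma.b5} with the roles of the indices swapped gives $H(\omega_j)^* H(\omega_\ell) \in \Sch_0$, and taking adjoints yields $H(\omega_\ell)^* H(\omega_j) \in \Sch_0$ as well — but I actually want $H(\omega_\ell) H(\omega_j)$. Using $H(\omega_j) = H(\omega_j)^*$, I get $A_\ell A_j = H(\omega_\ell) H(\omega_j)^*$, and Lemma~\ref{lma.b5} (in the form $H(\omega_1) H(\omega_2)^* \in \Sch_0$, which follows from the stated form by taking adjoints and relabelling) applies. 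In any case, $\Sch_0 = \cap_{q>0}\Sch_{q,\infty} \subset \Sch_{p/2,\infty}^0$ for every $p>0$, since membership in all $\Sch_{q,\infty}$ forces $s_n = O(n^{-\alpha})$ for all $\alpha$, hence $\Delta_{p/2}(A_\ell A_j) = 0$. This establishes \eqref{b4}.

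With \eqref{b4} verified, Theorem~\ref{thm.b3} applied to $A_\ell = H(\omega_\ell)$ immediately yields \eqref{a8}: namely $\Delta_p^\pm(H(\omega)) \le \sum_\ell \Delta_p^\pm(H(\omega_\ell))$ and $\delta_p^\pm(H(\omega)) \ge \sum_\ell \delta_p^\pm(H(\omega_\ell))$. The "in particular" statement then follows by the elementary sandwich: if $\Delta_p^\pm(H(\omega_\ell)) = \delta_p^\pm(H(\omega_\ell))$ for every $\ell$, then
$$
\sum_{\ell=1}^L \Delta_p^\pm(H(\omega_\ell)) \ge \Delta_p^\pm(H(\omega)) \ge \delta_p^\pm(H(\omega)) \ge \sum_{\ell=1}^L \delta_p^\pm(H(\omega_\ell)) = \sum_{\ell=1}^L \Delta_p^\pm(H(\omega_\ell)),
$$
forcing equality throughout.

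Honestly, once Theorem~\ref{thm.b3} and Lemma~\ref{lma.b5} are in hand, there is no serious obstacle — the proof is a short bookkeeping exercise. The only mild subtlety to get right is the conversion between the one-sided mixed products $H(\omega_\ell)^* H(\omega_j)$ appearing in Lemma~\ref{lma.b5} and the two-sided product $A_\ell A_j$ needed in \eqref{b4}; this is handled by invoking self-adjointness of each $A_j$ together with the fact that $\Sch_0$ (and each $\Sch_{q,\infty}^0$) is closed under taking adjoints, as noted in Section~2.1. The genuinely hard analytic content — that disjoint singular supports produce a product in $\Sch_0$, and that the abstract near-orthogonality estimate \eqref{b5} holds — has already been discharged in Lemma~\ref{lma.b5} and Theorem~\ref{thm.b3} respectively.
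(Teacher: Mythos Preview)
Your proof is correct and follows exactly the paper's approach: apply Theorem~\ref{thm.b3} with $A_\ell = H(\omega_\ell)$, using Lemma~\ref{lma.b5} together with self-adjointness of each $H(\omega_\ell)$ to get $A_\ell A_j = H(\omega_\ell)^* H(\omega_j) \in \Sch_0 \subset \Sch_{p/2,\infty}^0$. The adjoint bookkeeping you worry over is resolved in one step by $H(\omega_\ell) = H(\omega_\ell)^*$, so the meandering is unnecessary but harmless.
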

 
\begin{proof}
It suffices to use Theorem~\ref{thm.b3} with $A_\ell=\Hank(\omega_\ell)$. 
The inclusion $A_\ell A_j\in\Sch_{p/2,\infty}^0$ for $\ell\not=j$ follows from Lemma~\ref{lma.b5}.
 \end{proof}

Note that the expressions in \eqref{a8} may be infinite. 
In \cite{III}, we have an exact analogue of Theorem~\ref{thm.b3} 
for singular values of (not necessarily self-adjoint) Hankel operators.

\subsection{Hankel operators in $H^2(\bbR)$}
Hankel operators can also be defined  in the Hardy space $H^2(\bbR)$ 
of functions analytic in the upper half-plane.
We denote by $\Phi f=\wh f$ the  Fourier transform of $f$ in $L^2(\bbR)$, 
\begin{equation}
(\Phi f)(t)=
\wh f(t)=\frac1{\sqrt{2\pi}} \int_{-\infty}^\infty f(x) e^{-ixt}dx.
\label{b.ft}
\end{equation}
Let $H^2(\bbR)\subset L^2(\bbR)$ be the  Hardy class, 
$$
H^2(\bbR)=\{f\in L^2: \wh f(t)=0 \text{ for $t<0$}\},
$$
and let $\bP_+:L^2(\bbR)\to H^2(\bbR)$ be the corresponding orthogonal projection. 
Let $\bW$ be the involution in $L^2(\bbR)$, $(\bW f)(x)=f(-x)$. 
For ${\pmb\omega}\in L^\infty(\bbR)$, the Hankel operator 
$\bHank({\pmb\omega})$ in $H^2(\bbR)$ is defined by 
\begin{equation}
\bHank({\pmb\omega})f=\bP_+({\pmb\omega} \bW f), \quad f\in H^2(\bbR).
\label{b6}
\end{equation}
It is straightforward to see that the symmetry condition 
$$
\overline{\bomega(x)}=\bomega(-x), \quad x\in\bbR,
$$
ensures that $\bHank(\bomega)$ is self-adjoint.

There is a unitary equivalence between the  Hankel operators $\Hank(\omega)$ 
defined in $H^2(\bbT)$ by formula \eqref{a1} 
and the Hankel operators $\bHank({\pmb\omega})$ defined 
in $H^2(\bbR)$ by formula \eqref{b6}. 
Indeed, let
\begin{equation}
w=\frac{z-i/2}{z+i/2}, \quad z=\frac{i}{2}\frac{1+w}{1-w}, 
\label{b7}
\end{equation}
be the  standard conformal map sending the upper half-plane
 onto the unit disc, and let  $\calU:L^2(\bbT)\to L^2(\bbR)$ be
 the corresponding unitary operator defined by
$$
(\calU f)(x)
=
\tfrac1{\sqrt{2\pi}}\tfrac1{x+i/2} f(\tfrac{x-i/2}{x+i/2}), 
\quad
(\calU^* \mathbf f)(\mu)
=
i\sqrt{2\pi}\tfrac1{1-\mu}\mathbf  f (\tfrac{i}{2}\tfrac{1+\mu}{1-\mu}).
$$
Then
\begin{equation}
 \calU \Hank(\omega) \calU^*=\bHank({\pmb\omega})
\label{b8}
\end{equation}
provided
\begin{equation}
 {\pmb\omega} (x)= -\tfrac{x-i/2}{x+i/2}\omega (\tfrac{x-i/2}{x+i/2}).
\label{b9}
\end{equation}

Symbols $\bomega (x)$ of Hankel operators \eqref{b6}
   have the exceptional point $x=\infty$. 
   In order to rewrite the results obtained for Hankel operators $H(\omega)$ in terms of the
   Hankel operators ${\bf H}(\pmb\omega)$,
   we identify the points $x=+\infty$ and $x=-\infty$. The real line with such identification will be denoted $\bbR_{*}$. We write $\pmb\omega \in C (\bbR_{*})$ if $\pmb\omega \in C (\bbR )$ and if 
$$
\lim_{x\to \infty}\pmb\omega (x)=\lim_{x\to - \infty}\pmb\omega(x).
$$
Similarly, we write $\pmb\omega\in C^\infty (\bbR_{*})$ if 
$\pmb\omega\in C^\infty (\bbR )$ and 
\begin{equation}
\lim_{x\to \infty}\pmb\omega^{(m)} (x)=\lim_{x\to - \infty}\pmb\omega^{(m)}(x).
\label{b11}
\end{equation}
In particular, the point $x=\infty$ belongs to the singular support of $\pmb\omega (x)$ if 
for some $m\geq0$ the relation \eqref{b11} fails
(i.e. if either at least one of the limits does not exist or if the limits are not equal).

In view of relations 
\eqref{b8} and \eqref{b9} the localization principle for Hankel operators in $H^2(\bbR)$ given below is a direct consequence of the localization principle in $H^2(\bbT)$ (Theorem~\ref{thm.a3}). 

\begin{theorem}\label{thm.b6}
Let $\bomega_1,\dots,\bomega_L\in C(\bbR_*)$ be symbols 
satisfying the symmetry condition 
\begin{equation}
\overline{\bomega(x)}=\bomega(-x)
\label{eq:HS}
\end{equation}
  and such that 
the singular supports of $\bomega_\ell$ for different $\ell$ are disjoint.  
Then for the symbol $\bomega=\bomega_1+\cdots\bomega_L$ and for any $p>0$ we have
$$
\Delta_p^\pm(\bHank(\bomega))\leq\sum_{\ell=1}^L\Delta_p^\pm(\bHank(\bomega_\ell)),
\quad
\delta_p^\pm(\bHank(\bomega))\geq\sum_{\ell=1}^L\delta_p^\pm(\bHank(\bomega_\ell)).
$$
In particular, if $\Delta_p^\pm(\bHank(\bomega_\ell))=\delta_p^\pm(\bHank(\bomega_\ell))$ 
for all $\ell$, then 
$$
\Delta_p^\pm(\bHank(\bomega))=\delta_p^\pm(\bHank(\bomega))
=
\sum_{\ell=1}^L\Delta_p^\pm(\bHank(\bomega_\ell)).
$$
\end{theorem}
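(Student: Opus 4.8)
The plan is to reduce Theorem~\ref{thm.b6} directly to its discrete counterpart, Theorem~\ref{thm.a3}, by exploiting the unitary equivalence \eqref{b8}--\eqref{b9} between Hankel operators on $H^2(\bbR)$ and those on $H^2(\bbT)$. First I would, for each $\ell$, pull back the symbol $\bomega_\ell\in C(\bbR_*)$ to a symbol $\omega_\ell$ on $\bbT$ via the relation \eqref{b9}, i.e. solve for $\omega_\ell$ in terms of $\bomega_\ell$ using the conformal map \eqref{b7}. Since $\calU$ is unitary and does not depend on $\ell$, we get $\calU\,\bHank(\bomega_\ell)\,\calU^* = \Hank(\omega_\ell)$ for each $\ell$, and hence $\calU\,\bHank(\bomega)\,\calU^*=\Hank(\omega)$ with $\omega=\omega_1+\cdots+\omega_L$. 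Because unitary equivalence preserves singular values and eigenvalues, the functionals $\Delta_p^\pm$ and $\delta_p^\pm$ are unchanged, so it suffices to verify that the symbols $\omega_\ell$ satisfy the hypotheses of Theorem~\ref{thm.a3}.

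The two hypotheses to check are the symmetry condition \eqref{a2} and the disjointness of the singular supports. For the symmetry condition: the conformal map \eqref{b7} sends complex conjugation $x\mapsto \overline{x}$ (equivalently $x\mapsto x$ on $\bbR$ together with the relevant reflection) to the map $\mu\mapsto\overline{\mu}$ on $\bbT$; a short computation with \eqref{b9} shows that the self-adjointness symmetry \eqref{eq:HS} for $\bomega_\ell$ translates exactly into \eqref{a2} for $\omega_\ell$. (This is the same computation that underlies the statement, made earlier in the excerpt, that \eqref{eq:HS} makes $\bHank(\bomega)$ self-adjoint and \eqref{a2} makes $\Hank(\omega)$ self-adjoint; the two are intertwined by $\calU$.) For the singular supports: the map $x\mapsto (x-i/2)(x+i/2)^{-1}$ is a $C^\infty$ diffeomorphism of $\bbR_*$ onto $\bbT$ (with $x=\infty$ going to $\mu=1$), and the prefactor $-(x-i/2)(x+i/2)^{-1}$ in \eqref{b9} is itself smooth and nonvanishing on all of $\bbR_*$, including at $x=\infty$. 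Hence multiplication by this prefactor and composition with the diffeomorphism preserve $C^\infty$ locally, so $\sing\supp\omega_\ell$ is exactly the image of $\sing\supp\bomega_\ell$ under the diffeomorphism. Disjointness of the $\sing\supp\bomega_\ell$ therefore gives disjointness of the $\sing\supp\omega_\ell$.

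With both hypotheses verified, Theorem~\ref{thm.a3} applied to $\omega_1,\dots,\omega_L$ yields the inequalities \eqref{a8} for $\Hank(\omega_\ell)$ and $\Hank(\omega)$; transporting back through $\calU$ gives precisely the asserted inequalities for $\bHank(\bomega_\ell)$ and $\bHank(\bomega)$, and the ``in particular'' clause follows immediately by the same sandwiching argument as in Theorem~\ref{thm.a3}. Strictly speaking one should also note that each $\bomega_\ell\in C(\bbR_*)\subset L^\infty(\bbR)$, so that the operators involved are bounded and the constructions make sense; continuity in fact gives compactness, but for the statement only boundedness is needed.

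The main obstacle — really the only non-formal point — is the careful bookkeeping in the identity \eqref{b9}: one must check that the prefactor $-(x-i/2)(x+i/2)^{-1}$ extends smoothly and without zeros across $x=\infty$ on $\bbR_*$ (so that it cannot create or destroy singular points there), and one must confirm that the symmetry \eqref{eq:HS} transforms correctly. Both are elementary, but the presence of the exceptional point $x=\infty$ and the $C(\bbR_*)$/$C^\infty(\bbR_*)$ conventions introduced just before the theorem mean this is where care is required; everything else is a direct transcription of Theorem~\ref{thm.a3} through a fixed unitary.
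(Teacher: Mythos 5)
Your proposal is correct and is precisely the paper's own approach: the paper simply states that Theorem~\ref{thm.b6} is ``a direct consequence'' of Theorem~\ref{thm.a3} in view of the unitary equivalence \eqref{b8}--\eqref{b9}, without elaborating. Your checks that the prefactor in \eqref{b9} is smooth and nonvanishing on all of $\bbR_*$ (so singular supports transport diffeomorphically and disjointness is preserved) and that the symmetry \eqref{eq:HS} translates to \eqref{a2} are exactly the verifications left implicit in the paper.
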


\section{Symmetry principle}\label{sec.c}

The symmetry  principle for the eigenvalues of Hankel operators in the spaces $H^2(\bbT)$ and $H^2(\bbR)$ will be stated in Theorems~\ref{thm.a1} and \ref{thm.c3}, respectively, in terms of the functionals $\Delta_p^\pm$, $\delta_p^\pm$ (see \eqref{DD2} and \eqref{DD1}). Moreover, in Section~4.3 we discuss the symmetry  principle for the essential spectrum of Hankel operators.

\subsection{Symmetry principle in $H^2(\bbT)$}

The symmetry principle for compact self-adjoint Hankel operators $\Hank(\omega)$ in the space $L^2(\bbT)$ can be stated as follows.

\begin{theorem}\label{thm.a1}
Let $\omega\in L^\infty (\bbT)$ be a symbol satisfying the symmetry condition $\eqref{a2}$
and such that $\sing\supp\omega$ does not contain the points $1$ and $-1$. 
Then for any $p>0$, 
\begin{align*}
\Delta_p^+(\Hank(\omega))&=\Delta_p^-(\Hank(\omega))=\tfrac12\Delta_p(\Hank(\omega)),
\\
\delta_p^+(\Hank(\omega))&=\delta_p^-(\Hank(\omega))=\tfrac12\delta_p(\Hank(\omega)).
\end{align*}
\end{theorem}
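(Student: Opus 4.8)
The plan is to deduce Theorem~\ref{thm.a1} from the abstract symmetry result Theorem~\ref{lma.c2}: it suffices to exhibit a unitary operator $U$ on $H^2(\bbT)$ for which the ``twisted sum'' $UH(\omega)U^*+H(\omega)$ lies in $\Sch_{p,\infty}^0$. In fact I aim to place it in the smaller class $\Sch_0$ (contained in every $\Sch_{p,\infty}^0$ by \eqref{eq:sss}), since the only Hankel-specific ingredient available, Lemma~\ref{lma.b5}, is phrased in terms of $\Sch_0$.

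The first step is to split off the two halves of the singular support. Since $\sing\supp\omega$ is a closed subset of $\bbT\setminus\{1,-1\}$ which, by the symmetry condition \eqref{a2}, is invariant under $\mu\mapsto\bar\mu$, I choose $\chi\in C^\infty(\bbT)$ with $\chi(\mu)+\chi(\bar\mu)=1$ for all $\mu$, equal to $1$ in a neighbourhood of $\sing\supp\omega\cap\{\Im\mu>0\}$ and to $0$ in a neighbourhood of $\sing\supp\omega\cap\{\Im\mu<0\}$; such a $\chi$ exists precisely because $\sing\supp\omega$ avoids $\pm1$. Put $\omega_+:=\chi\omega$ and $\omega_-:=\omega-\omega_+$. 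Then $\omega_-(\mu)=\overline{\omega_+(\bar\mu)}$ (using \eqref{a2} together with $1-\chi(\mu)=\chi(\bar\mu)$), so that $H(\omega_-)=H(\omega_+)^*$, and $\sing\supp\omega_+$, $\sing\supp\omega_-$ are disjoint compact subsets of the open upper and lower semicircles. Writing $T:=H(\omega_+)$ we obtain $H(\omega)=T+T^*$, and Lemma~\ref{lma.b5} applied to the pairs $(\omega_-,\omega_+)$ and $(\omega_+,\omega_-)$ gives
\begin{equation*}
T^2=H(\omega_-)^*H(\omega_+)\in\Sch_0,\qquad (T^*)^2=H(\omega_+)^*H(\omega_-)\in\Sch_0.
\end{equation*}
Thus $T$ is ``asymptotically nilpotent of order two'', and the problem becomes: construct a unitary $U$ with $U(T+T^*)U^*+(T+T^*)\in\Sch_0$; it would suffice, for instance, to have $UTU^*=-T^*$ modulo $\Sch_0$, since then automatically $UT^*U^*=-T$ modulo $\Sch_0$.

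If $T$ were \emph{exactly} nilpotent ($T^2=0$) this would be straightforward: then $\overline{\Ran T}\subseteq\Ker T$, so the self-adjoint unitary $U:=I-2P$, with $P$ the orthogonal projection onto $\overline{\Ran T}$, satisfies $U(T+T^*)U=-(T+T^*)$, as one checks from the identities $PT=T$ and $T^*P=T^*$. In the approximate case the same computation yields only
\begin{equation*}
U(T+T^*)U+(T+T^*)=2\bigl(TP+(TP)^*\bigr),
\end{equation*}
so this naive choice works exactly when $TP=T|_{\overline{\Ran T}}\in\Sch_0$, i.e. when $\overline{\Ran T}$ and $\overline{\Ran T^*}$ are asymptotically orthogonal. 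This is where the real difficulty lies: the inclusion $TP\in\Sch_0$ does \emph{not} follow from $T^2\in\Sch_0$ by abstract operator theory (block examples $\bigoplus_k\left(\begin{smallmatrix}0&a_k\\ a_k\varepsilon_k&0\end{smallmatrix}\right)$ with $a_k\to0$ slowly and $\varepsilon_k\to0$ rapidly have $T^2\in\Sch_0$ while $\overline{\Ran T}=\overline{\Ran T^*}$ is the whole space), and indeed for generic symbols $H(\omega_+)$ has dense range, so $P=I$ and the criterion fails. Hence the unitary must genuinely exploit the Hankel structure of $T$. I would expect the construction to proceed by first reducing, via the localization principle (Theorem~\ref{thm.a3}) together with the singular-value localization of \cite{III}, to the case in which $\sing\supp\omega$ is a single complex-conjugate pair $\{\zeta,\bar\zeta\}$, and then producing $U$ from the shift intertwining $S^*T=TS$ (equivalently, from an explicit model operator attached to a conjugate pair) in which the reflection symmetry is manifest; controlling the resulting error in $\Sch_0$ — by a quantitative sharpening of the ``disjoint singular supports $\Rightarrow$ smoothing'' mechanism behind Lemma~\ref{lma.b5} — is the step I expect to be the main obstacle. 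Once such a $U$ is in hand, Theorem~\ref{lma.c2} finishes the proof, and the continuous analogue (Theorem~\ref{thm.c3}) follows from the unitary equivalence \eqref{b8}--\eqref{b9} exactly as Theorem~\ref{thm.b6} was derived from Theorem~\ref{thm.a3}.
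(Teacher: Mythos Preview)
Your overall strategy---apply Theorem~\ref{lma.c2} with a suitable unitary $U$---is exactly the paper's, and your decomposition $H(\omega)=T+T^*$ with $T^2,(T^*)^2\in\Sch_0$ is correct and illuminating. However the proof is genuinely incomplete: you correctly diagnose that the naive choice $U=I-2P$ fails, and then only \emph{speculate} (``I would expect the construction to proceed\dots'', ``the step I expect to be the main obstacle'') about building a working $U$ via localization to a single conjugate pair and the shift relation $S^*T=TS$, without carrying any of this out. That program, even if feasible, is far harder than what is needed, and the shift relation you mention holds for \emph{every} Hankel operator, so by itself it cannot produce the sign flip you want.

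The paper's unitary is entirely explicit and bypasses all range-projection issues. View $H(\omega)$ as acting on all of $L^2(\bbT)$ via $H(\omega)=P_+\omega WP_+$ (this does not change the nonzero spectrum), and take $U=s$, multiplication by $s(\mu)=\sign\Im\mu$. This is a self-adjoint unitary on $L^2(\bbT)$, and since $s(\bar\mu)=-s(\mu)$ it \emph{exactly} anti-commutes with the flip: $sW=-Ws$. The only thing to check is that the failure of $s$ to commute with $P_+$ is negligible once multiplied by $\omega$. Here the hypothesis on $\sing\supp\omega$ enters: after subtracting a $C^\infty$ piece (whose Hankel operator is in $\Sch_0$) one may assume $\omega$ vanishes near $\pm1$, choose $\varphi\in C^\infty(\bbT)$ with $\varphi\omega=\omega$ and $\varphi\equiv0$ near $\pm1$, and observe that $s\varphi\in C^\infty(\bbT)$, so
\[
[s,P_+]\,\omega=[s,P_+]\,\varphi\omega=\bigl(s[P_+,\varphi]+[s\varphi,P_+]\bigr)\omega\in\Sch_0
\]
by the standard fact that $[\sigma,P_+]\in\Sch_0$ for smooth $\sigma$. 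From this and $sW=-Ws$ one gets $sH(\omega)+H(\omega)s\in\Sch_0$, hence $sH(\omega)s+H(\omega)\in\Sch_0$, and Theorem~\ref{lma.c2} finishes the proof.

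In hindsight your cutoff $\chi$ already encodes the right geometric idea---the function $2\chi-1$ is a smoothed version of $s$---but you used it to \emph{split} $\omega$ and then searched for an abstract unitary adapted to $T=H(\omega_+)$. The missing structural fact is the anti-commutation $sW=-Ws$: one should use (the sharp version of) this sign function \emph{as} the unitary $U$, not as a device to decompose the symbol.
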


If $\sing\supp\omega$ contains the points $1$ or $-1$, the symmetry breaks down:
the contribution of each of these points to the spectrum is not symmetric. 
This will be illustrated below by Theorem~\ref{HT}.

Put
\begin{equation}
s(\mu)=\sign \Im\mu, \quad \mu\in \bbT.
\label{Ss}
\end{equation}
The operator of 
multiplication by $s$ in $L^2(\bbT)$ will also be denoted by $s$. 
Clearly, $s=s^*$  and $s^2=I$.
 In the following statement, $\Hank(\omega)$ is 
not necessarily self-adjoint. 

Below it will be  convenient to consider Hankel operators $H(\omega)$ as operators 
acting not on the Hardy class, but on the space $L^2 (\bbT)$; in this case $H(\omega)$ is defined
by the formula
\begin{equation}
\Hank(\omega)=P_+\omega WP_+. 
\label{L2}
\end{equation}
Of course, the non-zero spectra of the operators \eqref{a1}  and \eqref{L2} coincide.

\begin{lemma}\label{lma.c1}
Let $\omega\in L^\infty(\bbT)$ be such that the singular support of $\omega$ 
does not contain the points $1$ and $-1$. 
Then 
\begin{equation}
s\Hank(\omega) +\Hank(\omega)s \in\Sch_0.
\label{c1}
\end{equation}
\end{lemma}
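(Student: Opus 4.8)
The plan is to show that $s$ nearly commutes with $\Hank(\omega)=P_+\omega WP_+$ by tracking how $s$ interacts with each of the three factors $P_+$, $\omega$, and $W$, modulo the ideal $\Sch_0$. The heuristic is that $s(\mu)=\sign\Im\mu$ is itself a (bounded) symbol whose only discontinuities are at $\mu=\pm1$; since $\sing\supp\omega$ avoids $\pm1$, the product $s\omega$ will be no worse than $\omega$ off $\pm1$, and the "jump part" of $s$ lives exactly where $\omega$ is smooth, so all commutators involving $s$ and $\omega$ should be smoothing.

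The key steps, in order. \emph{Step 1: the interaction of $s$ with $W$ and $P_+$.} Since $(Wf)(\mu)=f(\ov\mu)$ and $s(\ov\mu)=-s(\mu)$, we have $sW=-Ws$ exactly. For $P_+$: the operator $s$ has the same symbol discontinuities as the characteristic function of the upper semicircle, so $P_+s-sP_+=[P_+,s]$ is a Hankel-plus-Toeplitz-type commutator whose symbol $s$ is piecewise $C^\infty$ with jumps only at $\pm1$; I expect $[P_+,s]\in\Sch_0$, or at worst $P_- s P_+\in\Sch_0$ where $P_-=I-P_+$ — this is where one cites or reproves that commutators of $P_+$ with functions smooth away from the relevant points are rapidly decaying. \emph{Step 2: the interaction of $s$ with $\omega$.} Because $\sing\supp\omega\cap\{1,-1\}=\varnothing$, one can write $s=s_1+s_2$ with $s_1\in C^\infty(\bbT)$, $s_1$ agreeing with $s$ on a neighborhood of $\sing\supp\omega$, and $\supp s_2$ a neighborhood of $\{1,-1\}$ disjoint from $\sing\supp\omega$. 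Then $\omega s_2$ and $s_2\omega$ have singular support disjoint from $\sing\supp\omega$ — in fact $s_2\omega\in C^\infty$ — so by Lemma~\ref{lma.b5} (applied to $\omega$ and $s_2$, whose singular supports are disjoint) the products $\Hank(\omega)^*\Hank(s_2)$, etc., lie in $\Sch_0$; and the smooth part $s_1$ contributes a smoothing commutator with $\omega$. \emph{Step 3: assemble.} Expand $s\Hank(\omega)+\Hank(\omega)s = sP_+\omega WP_+ + P_+\omega WP_+s$. Move the left $s$ through $P_+$ (error in $\Sch_0$ by Step 1), use $s\omega=\omega s \bmod \Sch_0$ (Step 2, after sandwiching between $P_+$'s), then use $sW=-Ws$, then move $s$ back through the remaining $P_+$; the sign flip from $sW=-Ws$ makes the two terms cancel modulo $\Sch_0$. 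One must be careful that $s$ maps $H^2$ out of $H^2$, which is exactly why it is cleanest to work with the $L^2(\bbT)$ realization \eqref{L2} and to keep the projections explicit throughout.

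The main obstacle will be Step 1 — controlling $[P_+,s]$, i.e. showing that the Riesz projection commutes with the non-smooth function $s$ up to an operator in $\Sch_0$. This is false for general bounded functions (the Hilbert transform of a jump is only logarithmically smooth, giving slow decay), so one genuinely needs $s$ to be piecewise $C^\infty$ away from $\pm1$ \emph{together with} the fact that the singular points $\pm1$ of $s$ do not meet $\sing\supp\omega$, allowing the bad part of $[P_+,s]$ to be absorbed via Lemma~\ref{lma.b5} rather than estimated directly. Concretely I would again split $s=s_1+s_2$ as above: $[P_+,s_1]\in\Sch_0$ since $s_1$ is smooth (smooth symbols give Hankel operators with super-polynomially decaying singular values), while the $s_2$ contribution is handled by the disjoint-singular-support argument since $\sing\supp s_2\subset\{$nbhd of $\pm1\}$ is disjoint from $\sing\supp\omega$ after the relevant sandwiching. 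The bookkeeping of which factor each $s_1$ versus $s_2$ hits, and keeping all the error terms in the ideal $\Sch_0$ (which is closed under products with bounded operators and under addition), is routine but needs care.
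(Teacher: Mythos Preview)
Your high-level strategy coincides with the paper's: use $sW=-Ws$ and show that moving $s$ past $P_+$ costs only an $\Sch_0$ error once $\omega$ is taken into account. The difficulty, as you correctly identify, is the commutator $[s,P_+]$. But your handling of it has real gaps.

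First, the claim ``in fact $s_2\omega\in C^\infty$'' is false. Your $s_2$ keeps the jumps of $s$ at $\pm1$; on $\supp s_2$ the symbol $\omega$ is smooth but generally nonzero, so $s_2\omega$ still has jump discontinuities at $\pm1$. Second, Lemma~\ref{lma.b5} is about products $H(\omega_1)^*H(\omega_2)$ of \emph{Hankel} operators with disjoint singular supports; it tells you nothing about the \emph{multiplication} operator $s_2$ acting on $H(\omega)$, so the conclusion ``$H(\omega)^*H(s_2)\in\Sch_0$'' is irrelevant to what you need, namely $s_2H(\omega)$ and $H(\omega)s_2$. Third, for your Step~3 to go through on the $s_1$-piece you need $s_1W=-Ws_1$, i.e.\ $s_1(\bar\mu)=-s_1(\mu)$; this oddness has to be built into the choice of $s_1$ and is not mentioned. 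These issues are fixable (insert a smooth cutoff $\psi$ equal to $1$ on $\supp s_2$ with $\supp\psi\cap\sing\supp\omega=\varnothing$, so that $\psi\omega\in C^\infty$ and $s_2H(\omega)\equiv s_2H(\psi\omega)\in\Sch_0$ via $[\psi,P_+]\in\Sch_0$; choose $s_1$ odd), but the argument as written does not go through.

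The paper avoids splitting $s$ altogether. It first writes $\omega=\omega_0+\omega_1$ with $\omega_1\in C^\infty(\bbT)$ (so $H(\omega_1)\in\Sch_0$) and $\omega_0$ vanishing near $\{\pm1\}$, reducing to the case where $\omega$ itself vanishes near $\pm1$. Then it takes $\varphi\in C^\infty(\bbT)$ with $\varphi\omega=\omega$, $\varphi\omega_*=\omega_*$, and $\varphi\equiv0$ near $\pm1$; the point is that now $s\varphi\in C^\infty(\bbT)$, and the algebraic identity
\[
[s,P_+]\varphi \;=\; s[P_+,\varphi]+[s\varphi,P_+]
\]
exhibits $[s,P_+]\varphi$ as a sum of two terms each in $\Sch_0$ (commutators of $P_+$ with the smooth functions $\varphi$ and $s\varphi$). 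Since $\omega=\varphi\omega$, this immediately gives $sP_+\omega-P_+s\omega\in\Sch_0$, and your Step~3 then runs cleanly with $s$ intact (no oddness issue for a smoothed $s_1$ arises). The device is to transfer the smoothness burden from $s$ to $s\varphi$ via the cutoff, rather than trying to decompose $s$ itself.
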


\begin{proof}
We will use two well-known facts (see the book \cite{Peller} and Lemma~4.2 in \cite{III}, for additional details):

(i) if $\sigma\in C^\infty(\bbT)$, then $\Hank(\sigma)\in\Sch_0$;

(ii) if $\sigma\in C^\infty(\bbT)$, then the commutator $[\sigma,P_+] : =\sigma P_+ -P_{+} \sigma\in\Sch_0$.

Write $\omega=\omega_0+\omega_1$, where 
$\omega_0 \in L^\infty(\bbT)$ vanishes
identically in a  neighborhood of $\{-1,1\}$, and $\omega_1\in C^\infty(\bbT)$. 
By (i), it suffices to prove \eqref{c1} with $\omega_0$ instead of $\omega$. 
In what follows, we drop the subscript $0$ and simply assume that $\omega$ vanishes
in a  neighborhood of $\{-1,1\}$.

Put $\omega_*(\mu)=\omega(\overline\mu)$.  
Let us choose $\varphi\in C^\infty(\bbT)$ such that 
$\varphi\omega=\omega$ and $\varphi\omega_*=\omega_*$, 
and $\varphi$ vanishes in a  neighborhood of $\{-1,1\}$. Then we also have $s\varphi\in C^\infty(\bbT)$.
It follows from (ii)   that
$$
[s,P_+] \varphi
=
sP_+ \varphi -P_+s \varphi
=
s[P_+, \varphi]+[s \varphi,P_+]
\in\Sch_0,
$$
whence
\begin{equation}
sP_+\omega  
-
P_+s\omega  
=
[s,P_+]\omega  
=
[s,P_+] \varphi \omega  \in\Sch_0
\label{c3}
\end{equation}
and, multiplying by $WP_+$ on the right, 
\begin{equation}
s H(\omega)  
-
P_+ s\omega  WP_{+}
   \in\Sch_0 .
\label{c3x}
\end{equation}
Similarly to \eqref{c3}, we have
$$
\omega_*P_+ s
-
 \omega_* s P_+\in\Sch_0.
$$
Therefore using that $\Hank(\omega)=P_+ W\omega_{*} P_+ $ and multiplying by $P_+W$ on the left,   we    obtain
\begin{equation}
 H(\omega) s 
-
P_+ W \omega_{*} s   P_{+}
   \in\Sch_0.
\label{c3x1}
\end{equation}
Putting together \eqref{c3x},  \eqref{c3x1} and taking into account that $s\omega  W+W \omega_{*} s=0$, we conclude the proof of  \eqref{c1}.
\end{proof}

\begin{proof}[Proof of  Theorem~\ref{thm.a1}]
It remains to use Theorem~\ref{lma.c2} with $A=H(\omega)$ and $U=s$;
the inclusion \eqref{c6} in the hypothesis of this theorem holds true by 
Lemma~\ref{lma.c1}.
 \end{proof}

\subsection{Symmetry principle in $H^2(\bbR)$}

  The symmetry principles in $H^2(\bbT)$ and $H^2(\bbR)$ are equivalent.

\begin{theorem}\label{thm.c3}
Let $\bomega\in L^\infty (\bbR)$ satisfy the symmetry relation $\overline{\bomega(x)}=\bomega(-x)$,
and suppose that $\sing\supp\bomega$ does not contain $0$ or $\infty$.  
Then for any $p>0$, 
\begin{align}
\Delta_p^+(\bHank(\bomega))&=\Delta_p^-(\bHank(\bomega))=\tfrac12\Delta_p(\bHank(\bomega)),
\label{c15}
\\
\delta_p^+(\bHank(\bomega))&=\delta_p^-(\bHank(\bomega))=\tfrac12\delta_p(\bHank(\bomega)).
\label{c16}
\end{align}
\end{theorem}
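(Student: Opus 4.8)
The plan is to reduce Theorem~\ref{thm.c3} to the already-proven Theorem~\ref{thm.a1} via the unitary equivalence \eqref{b8}--\eqref{b9}. First I would recall that $\calU:L^2(\bbT)\to L^2(\bbR)$ is unitary and satisfies $\calU\Hank(\omega)\calU^*=\bHank(\bomega)$ whenever $\bomega$ and $\omega$ are related by \eqref{b9}, i.e. $\bomega(x)=-\tfrac{x-i/2}{x+i/2}\,\omega(\tfrac{x-i/2}{x+i/2})$. Because the functionals $\Delta_p^\pm$, $\delta_p^\pm$, $\Delta_p$, $\delta_p$ depend only on the singular values / eigenvalue sequences, they are invariant under unitary equivalence; hence it suffices to transfer the hypotheses on $\bomega$ into the corresponding hypotheses on $\omega$ and then quote Theorem~\ref{thm.a1}.

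The key step is therefore a careful bookkeeping of the conformal substitution $w=\tfrac{z-i/2}{z+i/2}$, which maps $\bbR\cup\{\infty\}$ onto $\bbT$, sending $x=0\mapsto\mu=-1$ and $x=\infty\mapsto\mu=+1$. Writing $\mu=\tfrac{x-i/2}{x+i/2}$, the factor $-\mu$ in \eqref{b9} is smooth and nonvanishing on all of $\bbT$, so multiplication by it does not change the singular support; consequently $\sing\supp\omega$ is exactly the image of $\sing\supp\bomega$ under $x\mapsto\tfrac{x-i/2}{x+i/2}$. Thus the assumption that $\sing\supp\bomega$ contains neither $0$ nor $\infty$ translates precisely into: $\sing\supp\omega$ contains neither $-1$ nor $+1$, which is the hypothesis of Theorem~\ref{thm.a1}. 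I would also check that the symmetry relation $\overline{\bomega(x)}=\bomega(-x)$ corresponds to $\overline{\omega(\mu)}=\omega(\overline\mu)$: indeed $x\mapsto -x$ corresponds to $\mu\mapsto\overline\mu$ under the conformal map (since $\overline{\tfrac{x-i/2}{x+i/2}}=\tfrac{x+i/2}{x-i/2}=\tfrac{(-x)-i/2}{(-x)+i/2}$), and the prefactor $-\tfrac{x-i/2}{x+i/2}=-\mu$ has modulus one and transforms correctly, so \eqref{a2} holds for $\omega$ iff \eqref{eq:HS} holds for $\bomega$.

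With these two translations in hand the proof is a single line: apply Theorem~\ref{thm.a1} to $\omega$ and use $\calU\Hank(\omega)\calU^*=\bHank(\bomega)$ together with the unitary invariance of $\Delta_p^\pm$, $\delta_p^\pm$, $\Delta_p$, $\delta_p$ to obtain \eqref{c15} and \eqref{c16}. I do not expect a genuine obstacle here; the only point requiring mild care is verifying that the image under the conformal map of a smooth neighborhood of a point of $\bbR_*$ is a smooth neighborhood of the corresponding point of $\bbT$ (and conversely), so that ``singular support'' is genuinely preserved — this is routine since \eqref{b7} is a diffeomorphism of $\bbR\cup\{\infty\}$ onto $\bbT$ and the prefactor in \eqref{b9} is $C^\infty$ and nowhere zero. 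I would phrase the whole argument in two or three sentences, exactly parallel to the short derivation of Theorem~\ref{thm.b6} from Theorem~\ref{thm.a3}.
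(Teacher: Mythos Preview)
Your proposal is correct and follows essentially the same route as the paper: define $\omega$ via \eqref{b9}, note that the conformal map \eqref{b7} sends $0\mapsto-1$ and $\infty\mapsto1$ so that the singular-support hypothesis transfers, and then invoke Theorem~\ref{thm.a1} together with the unitary equivalence \eqref{b8}. The paper's proof is simply a terser version of what you wrote, omitting the explicit verification that \eqref{eq:HS} corresponds to \eqref{a2}.
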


\begin{proof}
Observe that the map \eqref{b7}  sends the point $z=\infty$ into the point $w= 1$ and the point $z=0$ into the point $w= -1$. Let the symbol $\omega$ be defined by formula  \eqref{b9}. Since its singular support does not contain the points $1$ and $-1$, Theorem~\ref{thm.a1} applies to the Hankel operator $H(\omega)$. According to \eqref{b8} the operators $H(\omega)$ and $\bHank(\bomega)$ are unitarily equivalent, which yields \eqref{c15} and \eqref{c16}.
\end{proof}

\subsection{Essential spectrum}

Although this is not the focus of the present paper, we mention that 
some variants of the symmetry principle also hold true for non-compact Hankel operators. 
For example, we have

\begin{theorem}\label{prop.ess}
Let $\omega\in L^\infty(\bbT)$ be a symbol satisfying the  symmetry condition $\eqref{a2}$. Suppose that $\omega$ is continuous in  some neighborhoods of the points $1$ and $-1$. Then 
\begin{equation}
\sigma_\ess(H(\omega))=\sigma_\ess(-H(\omega)).
\label{eq:ess}
\end{equation}
\end{theorem}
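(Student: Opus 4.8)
\textbf{Proof proposal for Theorem~\ref{prop.ess}.}

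The plan is to mimic the structure of the proof of Theorem~\ref{thm.a1}, replacing the ``compact up to $\Sch_0$'' statements by ``compact'' statements, and then to replace the functional analysis of Theorem~\ref{lma.c2} by the standard invariance of the essential spectrum under unitary conjugation and compact perturbations. Concretely, I would first establish the analogue of Lemma~\ref{lma.c1}: if $\omega\in L^\infty(\bbT)$ satisfies \eqref{a2} and is continuous in neighbourhoods of $1$ and $-1$, then
\begin{equation*}
s\Hank(\omega)+\Hank(\omega)s\in\calS_\infty,
\end{equation*}
where $\calS_\infty$ denotes the class of compact operators and $s$ is the multiplication operator from \eqref{Ss}. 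Here $\Hank(\omega)$ is taken in the form \eqref{L2} acting on $L^2(\bbT)$, which is legitimate since the non-zero essential spectrum is unchanged. The argument is the same splitting $\omega=\omega_0+\omega_1$ used in Lemma~\ref{lma.c1}, except that now $\omega_1$ need only be \emph{continuous} (not $C^\infty$) near $\pm1$, and $\omega_0$ vanishes near $\pm1$ while lying in $L^\infty(\bbT)$; the facts (i)--(ii) from that proof get replaced by their classical continuous-symbol counterparts: $\Hank(\sigma)$ is compact for $\sigma\in C(\bbT)$ (Hartman's theorem), and $[\sigma,P_+]$ is compact for $\sigma\in C(\bbT)$ (the commutator of a continuous multiplier with the Riesz projection is compact). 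One has to check that the auxiliary cutoff $\varphi$ can be chosen continuous with $s\varphi$ continuous, which is fine since $\varphi$ vanishes near $\pm1$ where $s$ jumps; everything else in the chain \eqref{c3}--\eqref{c3x1} goes through verbatim with $\Sch_0$ replaced by $\calS_\infty$, using again $s\omega W+W\omega_* s=0$.

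Granting this, the conclusion is immediate: from $s\Hank(\omega)+\Hank(\omega)s\in\calS_\infty$ and $s^2=I$ we get
\begin{equation*}
s\Hank(\omega)s=-\Hank(\omega)+K,\qquad K\in\calS_\infty,
\end{equation*}
so $s\Hank(\omega)s$ is a compact perturbation of $-\Hank(\omega)$. Since $s$ is unitary, $\sigma_\ess(s\Hank(\omega)s)=\sigma_\ess(\Hank(\omega))$, and since the essential spectrum is invariant under compact perturbations, $\sigma_\ess(s\Hank(\omega)s)=\sigma_\ess(-\Hank(\omega))$. Combining the two gives \eqref{eq:ess}. Note that $\Hank(\omega)$ here need not be self-adjoint for the argument to run, although under \eqref{a2} it is; the essential spectrum is understood in the sense appropriate to the (possibly non-self-adjoint) setting, e.g.\ Weyl essential spectrum, for which both invariances hold.

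The main obstacle is the compactness statement for $s\Hank(\omega)+\Hank(\omega)s$, i.e.\ the continuous-symbol analogue of Lemma~\ref{lma.c1}. The only genuinely new point relative to the smooth case is that one must use the compactness of $[\sigma,P_+]$ for merely continuous $\sigma$ rather than its membership in $\Sch_0$, and similarly Hartman's theorem in place of the rapid decay of eigenvalues for smooth symbols; these are classical (see \cite{Peller}), so the adaptation is routine but should be spelled out. A minor technical care is needed in choosing the partition $\omega=\omega_0+\omega_1$ so that $\omega_1\in C(\bbT)$ globally while $\omega_0$ vanishes near $\pm1$: this uses only a continuous partition of unity on $\bbT$ subordinate to the set where $\omega$ is known to be continuous, which exists because that set is an open neighbourhood of $\{1,-1\}$. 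Once these ingredients are in place the proof is short.
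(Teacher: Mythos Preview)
Your proposal is correct and reaches the same conclusion as the paper by the same endgame: show that $s\Hank(\omega)s+\Hank(\omega)$ is compact and then invoke unitary invariance plus Weyl's theorem. The route to compactness, however, is genuinely different. You redo the proof of Lemma~\ref{lma.c1} with $\Sch_0$ replaced by the compact ideal, using the continuous-symbol versions of facts (i) and (ii) (Hartman's theorem and compactness of $[\sigma,P_+]$ for $\sigma\in C(\bbT)$). The paper instead keeps Lemma~\ref{lma.c1} as a black box for \emph{smooth} symbols and reduces to that case by an approximation argument: it chooses $\omega_n$ equal to $\omega$ off a neighbourhood $G$ of $\{\pm1\}$ and $C^\infty$ on $G$, with $\norm{\omega-\omega_n}_{L^\infty(\bbT)}\to0$; then $sH(\omega_n)s+H(\omega_n)\in\Sch_0$ by Lemma~\ref{lma.c1}, and the norm limit is compact. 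Your approach is more self-contained and avoids the approximation step, at the cost of invoking the (standard) continuous-symbol compactness facts; the paper's approach is shorter given that Lemma~\ref{lma.c1} is already available, and needs nothing beyond that lemma and the density of $C^\infty$ in $C$ on arcs.
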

\begin{proof} 
Let $\omega$ be continuous on the union $G$ of two arcs $[e^{-i\delta}, e^{i \delta}]$ and $[-e^{i\delta}, -e^{-i\delta}]$ for some $\delta>0$.
There exist functions $\omega_n\in C^\infty(G)$ such that $\| \omega-\omega_{n}\|_{L^\infty(G)}\to 0$ as $n\to \infty$. We set
$\omega_n (\mu)= \omega  (\mu)$ for $\mu\in\bbT\setminus G$. Then $\| \omega-\omega_{n}\|_{L^\infty(\bbT)}\to 0$ as $n\to \infty$. It follows that  $\norm{H(\omega)-H(\omega_n)}\to 0$ and hence
$$
\norm{(s H(\omega)s+H(\omega))-(sH(\omega_n)s+H(\omega_n))}\to0,
\quad
n\to\infty,
$$
where $s$ is defined by formula \eqref{Ss}.
By Lemma~\ref{lma.c1}, the operators $sH(\omega_n)s+H(\omega_n)$ are compact for all $n$ and so
  the operator $sH(\omega)s+H(\omega)$ is also compact. 
Applying H.~Weyl's theorem on the stability of the essential spectrum under compact
perturbations, we obtain that
$$
\sigma_\ess(H(\omega))=\sigma_\ess(sH(\omega)s)=\sigma_\ess(-H(\omega)),
$$
as required. 
\end{proof}

We are not aware of this statement appearing explicitly in the literature, although
similar considerations have been used by S.~Power in his work \cite{Power}.

If $\omega$ is discontinuous at $1$ or $-1$, then in general the symmetry \eqref{eq:ess}  breaks down (see formula \eqref{eq:XX4}).

Of course Theorem~\ref{prop.ess} can be reformulated in terms of Hankel operators in the space $H^2 (\bbR)$.

\begin{theorem}\label{essc}
Let $\bomega\in L^\infty(\bbR)$ be a symbol satisfying the   condition $\eqref{eq:HS}$. Suppose that $\bomega$ is continuous in  
neighborhoods of the points $0$ and $\infty$. Then 
$$
\sigma_\ess(\bHank(\bomega))=\sigma_\ess(- \bHank(\bomega)).
$$
\end{theorem}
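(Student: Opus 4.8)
The plan is to deduce the statement from its discrete counterpart, Theorem~\ref{prop.ess}, by transporting it through the conformal equivalence \eqref{b7}--\eqref{b9}, in exactly the same way Theorem~\ref{thm.c3} was deduced from Theorem~\ref{thm.a1}. Recall from \eqref{b7} that the conformal map sends $z=\infty$ to $w=1$ and $z=0$ to $w=-1$, so that a neighbourhood of $\infty$ in $\bbR_*$ corresponds to a neighbourhood of $1$ in $\bbT$, and a neighbourhood of $0$ in $\bbR$ to a neighbourhood of $-1$ in $\bbT$.

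First I would introduce the symbol $\omega\in L^\infty(\bbT)$ obtained by inverting \eqref{b9}, namely $\omega(\mu)=-\bar\mu\,\bomega\bigl(\tfrac{i}{2}\tfrac{1+\mu}{1-\mu}\bigr)$ for $\mu\in\bbT$. Since $\abs{\omega(\mu)}=\abs{\bomega(x)}$ we have $\omega\in L^\infty(\bbT)$, and a straightforward computation shows that the symmetry relation \eqref{eq:HS} for $\bomega$ translates into the symmetry condition \eqref{a2} for $\omega$. By \eqref{b8}, the operators $\bHank(\bomega)$ and $\Hank(\omega)$ are unitarily equivalent via $\calU$, and hence so are $-\bHank(\bomega)$ and $-\Hank(\omega)$; in particular $\sigma_\ess(\bHank(\bomega))=\sigma_\ess(\Hank(\omega))$ and $\sigma_\ess(-\bHank(\bomega))=\sigma_\ess(-\Hank(\omega))$.

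Next I would check that $\omega$ satisfies the hypotheses of Theorem~\ref{prop.ess}, i.e. that it is continuous in neighbourhoods of the points $1$ and $-1$. Away from $\mu=1$ the factor $-\bar\mu$ and the M\"obius map $\mu\mapsto\tfrac{i}{2}\tfrac{1+\mu}{1-\mu}$ are smooth on $\bbT$, so continuity of $\bomega$ near $0$ yields continuity of $\omega$ near $\mu=-1$. For $\mu$ near $1$ one uses that $w=(x-i/2)/(x+i/2)\to1$ as $x\to\pm\infty$, together with the fact that continuity of $\bomega$ near the point $\infty$ means precisely $\lim_{x\to+\infty}\bomega(x)=\lim_{x\to-\infty}\bomega(x)$; since $-\bar\mu\to-1$ as $\mu\to1$, the product $\omega(\mu)$ then extends continuously to $\mu=1$. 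Thus Theorem~\ref{prop.ess} applies to $\Hank(\omega)$ and gives $\sigma_\ess(\Hank(\omega))=\sigma_\ess(-\Hank(\omega))$; combined with the two identities of the previous paragraph, this proves $\sigma_\ess(\bHank(\bomega))=\sigma_\ess(-\bHank(\bomega))$.

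I expect no serious obstacle here: the argument is a routine change of variables. The only point requiring a little care is the reading of ``continuity near $\infty$'' in terms of the one-point compactification $\bbR_*$ --- that is, equality of the two one-sided limits of $\bomega$ at $\pm\infty$ --- which is exactly the condition that produces continuity of $\omega$ at the single point $\mu=1$. (Alternatively, one could avoid the transfer and mimic the proof of Theorem~\ref{prop.ess} directly in $H^2(\bbR)$, replacing multiplication by the function $s$ from \eqref{Ss} with multiplication by $\sign x$ and invoking the continuous analogue of Lemma~\ref{lma.c1}, itself obtained from Lemma~\ref{lma.c1} through \eqref{b8}.)
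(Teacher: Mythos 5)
Your proof is correct and is exactly what the paper has in mind: the paper itself just remarks that Theorem~\ref{prop.ess} ``can be reformulated'' in $H^2(\bbR)$, leaving the transfer via \eqref{b7}--\eqref{b9} implicit, and you have carried it out carefully, in particular noting that continuity of $\bomega$ near $\infty$ in the sense of $\bbR_*$ (equal one-sided limits) is precisely what produces continuity of $\omega$ at $\mu=1$.
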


\section{Spectral asymptotics for Hankel operators in $\ell^2(\bbZ_+)$}\label{sec.d}

Recall that Hankel operators $\Gamma(h)$  in $\ell^2(\bbZ_+)$ were defined  by formula \eqref{a11}.
The main result of this section is Theorem~\ref{thm.a5}.  
It  gives the asymptotics
of eigenvalues of   operators $\Gamma(h)$ 
corresponding to
``oscillating"  sequences $h$ 
of the form \eqref{eq:AS}. 
An equivalent result for 
Hankel operators $H(\omega)$ in the Hardy space $H^2(\bbT)$ is stated in Theorem~\ref{HT}.

\subsection{Previous results}
We proceed from a particular case of Theorem~\ref{thm.a5} when 
the asymptotics of $h(j)$ consists of one term only.

\begin{theorem}\label{thm.a6}\cite[Theorem~1.1]{II}
Let $\alpha>0$ and let  
\begin{equation}
q(j)=
j^{-1}(\log j)^{-\alpha},
\quad
j\geq2,
\label{a24}
\end{equation}
$($the choice of any finite number of terms of the sequence $q$
is not important$)$.
Then the eigenvalues of the Hankel operator $\Gank(q)$ 
satisfy the asymptotic relation 
$$
\lambda_n^+(\Gank(q))= 
\varkappa(\alpha)n^{-\alpha} + o(n^{-\alpha}),
\quad 
\lambda_n^-(\Gank(q))= o(n^{-\alpha}),
$$
where the coefficient $\varkappa(\alpha)$ is given by formula \eqref{a15}.
\end{theorem}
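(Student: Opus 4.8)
This is the single-singularity base case, so the two things to produce are the sharp constant $\varkappa(\alpha)$ and the asymptotic absence of negative spectrum. The plan is to replace $\Gank(q)$ by an explicit integral model operator, rectify it into a convolution operator multiplied by a slowly varying weight, and read off the spectral asymptotics from a dyadic decomposition into truncated Carleman operators. \emph{First, reduction to a model.} I would use the exact chain of unitary equivalences $\ell^{2}(\bbZ_{+})\cong H^{2}(\bbT)\cong H^{2}(\bbR)\cong L^{2}(\bbR_{+})$: $\Gank(q)$ is unitarily equivalent to $\Hank(\omega)$ with $\wh\omega(j)=q(j)$ for $j\geq0$ (the negative Fourier coefficients are free, so $\omega$ may be chosen conveniently), then, via the conformal map of the disc and the half-plane (formulas \eqref{b7}--\eqref{b9}), to $\bHank(\bomega)$, and finally, via the Fourier transform, to an integral Hankel operator $\bGank(\bh)$ on $L^{2}(\bbR_{+})$. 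A direct estimate of the Fourier coefficients of $q$ shows $\sing\supp\omega=\{1\}$ with a purely logarithmic singularity of $\omega$ there; tracing it through the conformal map (which sends $\mu=1$ to $x=\infty$) and through the Fourier transform, and discarding $C^{\infty}$ pieces (which give operators in $\Sch_0$, harmless for all the quantities $\lim n^{\alpha}\lambda_{n}^{\pm}$ by Lemma~\ref{lma.b1} and \eqref{eq:sss}), one is reduced to the model operator $\bGank(\bh_{\alpha})$ with
\begin{equation*}
\bh_{\alpha}(t)=t^{-1}\Bigl(\log\tfrac{1}{t}\Bigr)^{-\alpha}\ \ (0<t<\tfrac12),\qquad \bh_{\alpha}(t)=0\ \ (t\geq\tfrac12);
\end{equation*}
a consistency check is that the Fourier transform of this kernel behaves like $(\log|x|)^{1-\alpha}$ at $x=\infty$, matching $\bomega$.

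\emph{Next, rectification, dyadic decomposition, and positivity.} The unitary $f\mapsto e^{-x/2}f(e^{-x})$ from $L^{2}(\bbR_{+})$ onto $L^{2}(\bbR)$ turns $\bGank(\bh_{\alpha})$ into the integral operator with kernel
\begin{equation*}
K(x,y)=\frac{1}{2\cosh((x-y)/2)}\left(\log\frac{1}{e^{-x}+e^{-y}}\right)^{-\alpha},
\end{equation*}
which, up to an $\Sch_0$-error from the cutoff $e^{-x}+e^{-y}\geq\tfrac12$, acts on $L^{2}$ of a half-line in $x$. The first factor is a convolution kernel with positive Fourier multiplier $\pi(\cosh\pi\xi)^{-1}$ (the rectified Carleman operator, with spectrum $[0,\pi]$), and the second is a positive weight that, on the exponentially thin diagonal strip where the first factor is essentially supported, equals $(\tfrac{x+y}{2})^{-\alpha}(1+o(1))$. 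Then I would partition the $x$-axis into $J_{m}=[2^{m},2^{m+1})$ and replace the weight by the constant $c_{m}:=2^{-m\alpha}$ on each $J_{m}$. The two errors — replacing the weight by a constant on the diagonal blocks, and dropping the off-diagonal blocks $J_{m}\times J_{m'}$ with $m\neq m'$ — both lie in $\Sch_{1/\alpha,\infty}^{0}$: the off-diagonal blocks decay exponentially in $|m-m'|$ since $|J_{m}|\to\infty$, and the diagonal replacement is negligible in this ideal precisely because $\alpha>0$ makes the oscillation of $(\log\tfrac1t)^{-\alpha}$ across $J_{m}$ of order $o(c_{m})$. What is left is the block-diagonal operator $P=\bigoplus_{m}c_{m}\mathbf{C}_{\ell_{m}}$, where $\mathbf{C}_{\ell}$ is the rectified Carleman operator compressed to an interval of length $\ell$ and $\ell_{m}=|J_{m}|=2^{m}\to\infty$. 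Each $\mathbf{C}_{\ell}\geq0$, so $P\geq0$; since $\Gank(q)$ differs from a unitary copy of $P$ by an operator in $\Sch_{1/\alpha,\infty}^{0}$, Lemma~\ref{lma.b1} already gives $n_{-}(\lambda;\Gank(q))=o(\lambda^{-1/\alpha})$, i.e. $\lambda_{n}^{-}(\Gank(q))=o(n^{-\alpha})$.

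\emph{Then, Landau--Widom asymptotics and the constant.} For a truncated convolution operator one has the Szeg\H{o}-type (Landau--Widom) asymptotics $n(\lambda;\mathbf{C}_{\ell})=\frac{\ell}{2\pi}\bigl|\{\xi:\pi(\cosh\pi\xi)^{-1}>\lambda\}\bigr|+o(\ell)$ as $\ell\to\infty$, uniformly on compact subsets of $(0,\pi)$; hence $n_{+}(\lambda;c_{m}\mathbf{C}_{\ell_{m}})=\frac{\ell_{m}}{2\pi}\bigl|\{\xi:\pi c_{m}(\cosh\pi\xi)^{-1}>\lambda\}\bigr|+o(\ell_{m})$. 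Since $P$ is an honest direct sum, $n_{+}(\lambda;P)=\sum_{m}n_{+}(\lambda;c_{m}\mathbf{C}_{\ell_{m}})$, and the sum over the finitely many (for fixed $\lambda$) contributing $m$ is a Riemann sum for
\begin{align*}
\frac{1}{2\pi}\int_{0}^{\infty}\!\!\int_{\bbR}\1\!\left[\frac{\pi}{\cosh\pi\xi}\,x^{-\alpha}>\lambda\right]d\xi\,dx
&=\frac{1}{2\pi}\left(\frac{\pi}{\lambda}\right)^{1/\alpha}\int_{\bbR}(\cosh\pi\xi)^{-1/\alpha}\,d\xi\\
&=\frac{1}{2\pi^{2}}\left(\frac{\pi}{\lambda}\right)^{1/\alpha}B\!\left(\tfrac{1}{2\alpha},\tfrac12\right),
\end{align*}
where I used $\int_{\bbR}(\cosh u)^{-s}\,du=B(s/2,\tfrac12)$. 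Checking that the accumulated $o(\ell_{m})$-terms and the Riemann-sum discrepancy are $o(\lambda^{-1/\alpha})$ then gives $n_{+}(\lambda;\Gank(q))\sim\frac{1}{2\pi^{2}}B(\tfrac{1}{2\alpha},\tfrac12)(\pi/\lambda)^{1/\alpha}$, which inverts to $\lambda_{n}^{+}(\Gank(q))=\varkappa(\alpha)n^{-\alpha}+o(n^{-\alpha})$ with $\varkappa(\alpha)=\bigl(\tfrac{1}{2\pi^{2}}B(\tfrac{1}{2\alpha},\tfrac12)\,\pi^{1/\alpha}\bigr)^{\alpha}=2^{-\alpha}\pi^{1-2\alpha}\bigl(B(\tfrac{1}{2\alpha},\tfrac12)\bigr)^{\alpha}$, exactly \eqref{a15}.

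\emph{The hard part} will be the two analytic estimates: showing that the block-diagonal approximation is accurate in $\Sch_{1/\alpha,\infty}^{0}$ (not merely in $\Sch_{1/\alpha,\infty}$), which requires quantifying the slow variation of $(\log\tfrac1t)^{-\alpha}$ and combining it with the exponential localization of the Carleman kernel near the diagonal; and controlling the Landau--Widom remainder uniformly as it is summed over the $\sim\log(1/\lambda)$ dyadic blocks while $\lambda\to0$. Establishing in the first step that the passage from $\Gank(q)$ to the clean model $\bGank(\bh_{\alpha})$ costs only a $\Sch_0$-error is less conceptually loaded but still needs a careful symbol computation near $\mu=1$.
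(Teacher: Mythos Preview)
The present paper does not prove this theorem; it is quoted from the earlier work \cite{II} (Theorem~1.1 there) and used as a black box for the proofs in Section~\ref{sec.d}. So there is no proof in this text to compare your proposal against.

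That said, your outline is a correct route to the result and is essentially the strategy of \cite{II}: pass to the continuous Hankel model with kernel singular at $t=0$, rectify via the exponential substitution $t=e^{-x}$ into an operator whose kernel is the convolution $(2\cosh\tfrac{x-y}{2})^{-1}$ modulated by the slowly varying weight $\sim(\tfrac{x+y}{2})^{-\alpha}$, and then extract Weyl-type asymptotics for this object. Your computation of the constant, using the Fourier multiplier $\pi(\cosh\pi\xi)^{-1}$ of the rectified Carleman kernel together with $\int_{\bbR}(\cosh u)^{-1/\alpha}\,du=B(\tfrac{1}{2\alpha},\tfrac12)$, lands exactly on $\varkappa(\alpha)$ in \eqref{a15}. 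The positivity of the block model immediately gives $\lambda_n^{-}=o(n^{-\alpha})$, as you note. You have also correctly identified where the genuine analytic work lies: proving that the block-diagonal approximation is accurate in $\Sch_{1/\alpha,\infty}^{0}$ (not merely $\Sch_{1/\alpha,\infty}$), and controlling the Landau--Widom remainder uniformly when it is summed over the $\sim\log(1/\lambda)$ contributing dyadic blocks. These are precisely the technical steps that occupy most of \cite{II}.

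One minor remark: in \cite{II} the passage from the discrete operator $\Gank(q)$ to the continuous model $\bGank(\bq_0)$ is done by a more direct comparison rather than by threading through the full chain $\ell^{2}(\bbZ_{+})\to H^{2}(\bbT)\to H^{2}(\bbR)\to L^{2}(\bbR_{+})$; your route is equivalent but involves a bit more symbol bookkeeping near $\mu=1$.
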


 Let $q_{- 1}(j)= (-1)^j q(j)$, and let the unitary operator $T$ 
 in $\ell^2(\bbZ_+)$
 be defined by the relation $(T u)(j)=(-1)^j u(j)$. Then $\Gank(q_{- 1})=T^* \Gank(q) T$    whence $\lambda_{n}^\pm (\Gank(q_{- 1}))= \lambda_{n}^\pm (\Gank(q))$. Therefore Theorem~\ref{thm.a6} yields

\begin{corollary}\label{cor.a6} 
The conclusions of Theorem~$\ref{thm.a6} $ are true for the sequence 
$$ q_{- 1}(j)= (-1)^j
j^{-1}(\log j)^{-\alpha}.$$
\end{corollary}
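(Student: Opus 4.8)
The plan is to reduce the assertion to Theorem~\ref{thm.a6} by a diagonal unitary conjugation that removes the alternating factor $(-1)^j$. I would introduce the operator $T$ on $\ell^2(\bbZ_+)$ defined by $(Tu)(j)=(-1)^j u(j)$; it is self-adjoint and satisfies $T^2=I$, so $T=T^*=T^{-1}$ is unitary.

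The only computation needed is the operator identity $\Gank(q_{-1})=T^*\Gank(q)T$. Starting from the defining formula \eqref{a11}, for $u\in\ell^2(\bbZ_+)$ one has
\[
(T^*\Gank(q)Tu)(j)=(-1)^j\sum_{k=0}^\infty q(j+k)(-1)^k u(k)=\sum_{k=0}^\infty (-1)^{j+k}q(j+k)\,u(k)=(\Gank(q_{-1})u)(j),
\]
where the interchange with $T$ is harmless because $T$ is bounded. One should fix the finitely many undefined initial terms of $q$ consistently (say $q_{-1}(j)=(-1)^j q(j)$ for $j\geq 2$ and arbitrarily otherwise), but by Theorem~\ref{thm.a6} such a choice does not affect the asymptotics.

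Since $T$ is unitary, $\Gank(q_{-1})$ is unitarily equivalent to $\Gank(q)$, hence has the same positive and negative eigenvalues counted with multiplicities; that is, $\lambda_n^\pm(\Gank(q_{-1}))=\lambda_n^\pm(\Gank(q))$ for every $n$. Substituting the asymptotics of $\lambda_n^\pm(\Gank(q))$ provided by Theorem~\ref{thm.a6} into this equality yields precisely the conclusions of that theorem with $q$ replaced by $q_{-1}$. I do not expect any genuine obstacle here: the entire content is the observation that the sign in the kernel $q_{-1}(j+k)=(-1)^{j+k}q(j+k)$ splits as $(-1)^j\cdot(-1)^k$, a function of $j$ times a function of $k$, which is exactly the type of modulation absorbed by diagonal unitary conjugation.
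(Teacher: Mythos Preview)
Your proof is correct and follows exactly the same approach as the paper: define the diagonal unitary $(Tu)(j)=(-1)^j u(j)$, verify $\Gank(q_{-1})=T^*\Gank(q)T$, and conclude $\lambda_n^\pm(\Gank(q_{-1}))=\lambda_n^\pm(\Gank(q))$ from unitary equivalence. The paper presents this in a single sentence before the corollary statement; your more detailed write-up is fine.
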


The first step in the proof of  Theorem~\ref{thm.a5} is to construct a symbol 
corresponding to the sequence \eqref{a24}. It is convenient to consider a slightly more general case.

\begin{lemma}\label{lma.d1}\cite[Lemma~4.3]{III}
Let $\zeta\in\bbT$, $\alpha\geq 0$, and let  
\begin{equation}
q_{\zeta}(j)=\zeta^{-j} j^{-1}(\log j)^{-\alpha}, \quad  j\geq2,
\label{symb}
\end{equation}
$(q_{\zeta}(0)=q_{\zeta} (1)=0)$. Put
\begin{equation}
\omega_{\zeta}(\mu)=\sum_{j=2}^\infty j^{-1}(\log j)^{-\alpha} \bigl( ( \mu/\zeta)^j- ( \mu/\zeta)^{-j}\bigr),
\quad
\mu\in\bbT.
\label{symb1}
\end{equation}
Then $\omega_{\zeta} \in L^\infty(\bbT)$ and $\omega_{\zeta}\in C^\infty(\bbT \setminus\{ \zeta\})$. 
For the Fourier coefficients
 of function \eqref{symb1}, we have   $\wh\omega_{\zeta}(j) =q_{\zeta}(j)$ for all $j\geq 0$.
\end{lemma}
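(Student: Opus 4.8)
The plan is to reduce to the case $\zeta=1$, represent $\omega_\zeta$ as the imaginary part of the boundary values of an explicit holomorphic function, and then read off boundedness, smoothness away from the singular point, and the Fourier coefficients from that representation.

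\emph{Reduction to $\zeta=1$ and the analytic splitting.} On $\bbT$ one has $\mu/\zeta=\mu\overline{\zeta}$, so $\omega_\zeta(\mu)=\omega_1(\mu\overline{\zeta})$ with $\omega_1$ given by \eqref{symb1} for $\zeta=1$. The rotation $(R_\zeta f)(\mu)=f(\mu\overline{\zeta})$ is unitary on $L^2(\bbT)$, maps $L^\infty(\bbT)$ onto itself, sends $C^\infty(\bbT\setminus\{1\})$ onto $C^\infty(\bbT\setminus\{\zeta\})$, and acts on Fourier coefficients by $\wh{R_\zeta f}(j)=\zeta^{-j}\wh f(j)$; since $\zeta^{-j}c_j=q_\zeta(j)$ with $c_j:=j^{-1}(\log j)^{-\alpha}$, it suffices to treat $\omega_1$. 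Put $G(z)=\sum_{j\ge 2}c_j z^j$ for $z$ in the open unit disc. As $\sum_{j\ge2}c_j^2<\infty$, $G$ belongs to the Hardy space of the disc, its boundary values $G^+$ lie in $H^2(\bbT)$ with $\wh{G^+}(j)=c_j$ for $j\ge2$ and $\wh{G^+}(j)=0$ otherwise, and, the $c_j$ being real, on $\bbT$ we have $\sum_j c_j\mu^{-j}=\overline{G^+(\mu)}$, so
\[
\omega_1(\mu)=G^+(\mu)-\overline{G^+(\mu)}=2i\,\Im G^+(\mu),\qquad \mu\in\bbT .
\]
This already gives $\wh\omega_1(j)=c_j$ for $j\ge2$ and $\wh\omega_1(0)=\wh\omega_1(1)=0$; it remains to see that $\omega_1\in L^\infty$ (so the series \eqref{symb1} represents this function) and that it is $C^\infty$ off $\{1\}$.

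\emph{Boundedness.} With $\mu=e^{i\theta}$ one has $\Im G^+(e^{i\theta})=\sum_{j\ge2}c_j\sin(j\theta)$. Since $c_j\downarrow0$ and $\sup_{j\ge2}jc_j=\sup_{j\ge2}(\log j)^{-\alpha}<\infty$, the classical estimate for sine series with monotone coefficients yields a bound $\bigl|\sum_{j=2}^{N}c_j\sin(j\theta)\bigr|\le C$ uniform in $N$ and $\theta$: split at $j\sim|\theta|^{-1}$, using $|\sin(j\theta)|\le j|\theta|$ for the low frequencies and summation by parts together with the uniform bound on the Dirichlet kernels $\sum_{j\le N}e^{ij\theta}$ for the high ones. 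Moreover, for $\mu\neq1$ the partial sums of $\sum_j c_j\mu^j$ converge by Dirichlet's test, so the series \eqref{symb1} converges pointwise on $\bbT\setminus\{1\}$ and agrees a.e. with the $L^2$–limit $\omega_1$. Hence $\omega_1\in L^\infty(\bbT)$, and translating back by $R_\zeta$ gives $\omega_\zeta\in L^\infty(\bbT)$ with $\wh\omega_\zeta(j)=q_\zeta(j)$ for all $j\ge0$.

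\emph{Smoothness off the singular point — the main obstacle.} It remains to show $\omega_1\in C^\infty(\bbT\setminus\{1\})$, i.e. that $G$ extends to a $C^\infty$ function on the closed disc minus $\{1\}$. For $\alpha=0$ this is immediate, $G(z)=-\log(1-z)-z$, holomorphic on $\bbC\setminus[1,\infty)$. For $\alpha>0$ one cannot simply differentiate the boundary series, since the differentiated coefficients $j^k c_j$ grow; I would instead show that $G$ extends holomorphically across $\bbT\setminus\{1\}$. Using $(\log j)^{-\alpha}=\Gamma(\alpha)^{-1}\int_0^\infty t^{\alpha-1}j^{-t}\,dt$ one obtains the representation
\[
G(z)=\frac{1}{\Gamma(\alpha)}\int_0^\infty t^{\alpha-1}\bigl(\operatorname{Li}_{1+t}(z)-z\bigr)\,dt ,
\]
where the polylogarithms $\operatorname{Li}_{1+t}$ are holomorphic on $\bbC\setminus[1,\infty)$; controlling this integral near $z=1$ and on compact subsets of $\bbC\setminus[1,\infty)$ shows $G$ is holomorphic there, in particular near $\bbT\setminus\{1\}$. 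Alternatively, one can run a bookkeeping argument with repeated Abel transforms applied to $G$ and its derivatives inside the disc: each transform replaces a coefficient sequence by one decaying a full power faster — the iterated differences obey $\Delta^m(j^k c_j)=O(j^{\,k-1-m}(\log j)^{-\alpha})$ — at the cost of a harmless factor $(1-e^{i\theta})^{-1}$ for $\theta$ bounded away from $0$, so after enough steps the sequences are summable and every $\theta$–derivative converges uniformly on $\{|\theta|\ge\delta\}$. Restricting to $|z|=1$ then gives $\omega_1\in C^\infty(\bbT\setminus\{1\})$, hence $\omega_\zeta\in C^\infty(\bbT\setminus\{\zeta\})$. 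I expect this regularity statement, rather than the $L^\infty$ bound, to be the only genuinely technical point.
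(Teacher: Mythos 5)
The paper does not reproduce a proof of this lemma---it is cited verbatim from reference~[III, Lemma~4.3]---so there is no in-text argument to compare against. That said, your proposal is correct, and it follows what is essentially the natural (and, judging from the authors' related work, the intended) route. The rotation reduction to $\zeta=1$ and the identity $\omega_1=2i\,\Im G^+$ immediately give the Fourier-coefficient claim; the $L^\infty$ bound via the classical uniform estimate for $\sum c_j\sin(j\theta)$ with $c_j\downarrow 0$ and $jc_j$ bounded is the standard mechanism here. For the smoothness away from $\zeta$, both routes you sketch work. The iterated Abel-summation route is the more elementary one and is closest in spirit to the finite-difference machinery the authors use elsewhere (compare Theorem~\ref{thm.a5a}, quoted from \cite{I}): the key estimate $\Delta^m\bigl(j^k c_j\bigr)=O\bigl(j^{k-1-m}(\log j)^{-\alpha}\bigr)$ is correct (it follows by comparison with $\tfrac{d^m}{dx^m}\bigl(x^{k-1}(\log x)^{-\alpha}\bigr)$), and taking $m\ge k+1$ makes each formally differentiated series converge uniformly on $\{|\theta|\ge\delta\}$ after $m$ transforms, at the cost only of bounded factors $(1-e^{i\theta})^{-1}$. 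The polylogarithm route is also valid, but is less self-contained as written: one needs to check that $t^{\alpha-1}\bigl(\operatorname{Li}_{1+t}(z)-z\bigr)$ is integrable on $(0,\infty)$ uniformly for $z$ in a compact subset of $\bbC\setminus[1,\infty)$; this does hold (the integral representation $\operatorname{Li}_{1+t}(z)-z=\frac{z^2}{\Gamma(1+t)}\int_0^\infty\frac{u^te^{-2u}}{1-ze^{-u}}\,du$ gives decay $O(2^{-t})$), but it deserves a line if you were to write the argument out. Two small remarks: the case $\alpha=0$ must be treated by the explicit formula $G(z)=-\log(1-z)-z$, as you note, since the $\Gamma(\alpha)^{-1}$ representation degenerates; and the identification of the pointwise sum with the $L^2$ boundary value of $G$ should be stated (either by Carleson, or more elementarily by Dirichlet's test off $\theta=0$ plus matching Fourier coefficients), which you do.
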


The assertion below is a particular case of our general result (Theorem~3.1 in \cite{III}) on the asymptotics of singular values of Hankel operators, needed in the present text.
Its proof in \cite{III} uses the localization principle for singular values (which is the analogue of Theorem~\ref{thm.a3}).

\begin{theorem}\label{thm.a4}\cite[Theorem~3.1]{III}
Let $\alpha>0$, let $\zeta \in\bbT$, $\Im \zeta\neq 0$, and let  $b \in \bbC$ be arbitrary. 
Consider the sequence $h$ given by  $h(0)=h(1)=0$ and
$$
h(j)=  2 \Re ( b  \zeta ^{-j}) j^{-1}(\log j)^{-\alpha}, 
\quad 
j\geq2.
$$
Then the singular values of $\Gank(h)$ satisfy the asymptotic relation 
\begin{equation}
s_n(\Gank(h))= 
2^\alpha \varkappa(\alpha)|b| n^{-\alpha} + o(n^{-\alpha}).
\label{a14}
\end{equation}
\end{theorem}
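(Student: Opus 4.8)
The plan is to reduce \eqref{a14} to the one-term asymptotics of Theorem~\ref{thm.a6} by means of the localization principle for singular values, Theorem~\ref{thm.b2}. Throughout put $p=1/\alpha$ and $q(j)=j^{-1}(\log j)^{-\alpha}$, write $q_{\zeta}(j)=\zeta^{-j}q(j)$ as in Lemma~\ref{lma.d1}, and assume $b\neq 0$ (the case $b=0$ is trivial). Since $q$ is real and $|\zeta|=1$, the sequence in the statement is $h=b\,q_{\zeta}+\bar b\,q_{\bar\zeta}$; and, because the conjugate transpose of a Hankel matrix is again a Hankel matrix with complex-conjugated entries, $\Gank(q_{\zeta})^{*}=\Gank(\overline{q_{\zeta}})=\Gank(q_{\bar\zeta})$. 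Hence $\Gank(h)=A+A^{*}$ with $A:=b\,\Gank(q_{\zeta})$. By Lemma~\ref{lma.d1}, $A$ is unitarily equivalent to $b\,\Hank(\omega_{\zeta})$ with $\omega_{\zeta}\in C^{\infty}(\bbT\setminus\{\zeta\})$, so $\sing\supp\omega_{\zeta}\subseteq\{\zeta\}$; similarly $A^{*}=\bar b\,\Gank(q_{\bar\zeta})$ corresponds to the symbol $\bar b\,\omega_{\bar\zeta}$ with $\sing\supp\omega_{\bar\zeta}\subseteq\{\bar\zeta\}$. Since $\Im\zeta\neq 0$, the points $\zeta$ and $\bar\zeta$ are distinct.

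The next step is to verify the hypotheses of Theorem~\ref{thm.b2} for the two pieces $A_{1}:=A$ and $A_{2}:=A^{*}$. All four ``cross products'' $A_{\ell}^{*}A_{j}$, $A_{\ell}A_{j}^{*}$ with $\ell\neq j$ equal either $A^{2}$ or $(A^{*})^{2}$, so it suffices to show $A^{2}\in\Sch_{0}$. Writing $A^{2}=b^{2}\,\Gank(q_{\bar\zeta})^{*}\Gank(q_{\zeta})$ (again using $\Gank(q_{\zeta})=\Gank(q_{\bar\zeta})^{*}$ for the first factor) and passing to the unitarily equivalent Hardy-space operators, this equals $b^{2}\,\Hank(\omega_{\bar\zeta})^{*}\Hank(\omega_{\zeta})$; since $\sing\supp\omega_{\bar\zeta}\cap\sing\supp\omega_{\zeta}=\varnothing$, Lemma~\ref{lma.b5} gives $A^{2}\in\Sch_{0}\subseteq\Sch_{p/2,\infty}^{0}$, and the same for $(A^{*})^{2}$ by taking adjoints.

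It remains to evaluate the functionals $\Delta_{p},\delta_{p}$ on the pieces. With the unitary $D=\diag\{\zeta^{-j}\}_{j\ge 0}$ one has $\Gank(q_{\zeta})=D\Gank(q)D$, hence $s_{n}(\Gank(q_{\zeta}))=s_{n}(\Gank(q))$ for every $n$. By Theorem~\ref{thm.a6}, $\lambda_{n}^{+}(\Gank(q))=\varkappa(\alpha)n^{-\alpha}+o(n^{-\alpha})$ and $\lambda_{n}^{-}(\Gank(q))=o(n^{-\alpha})$; hence, using \eqref{a4b}, the counting function satisfies $n(\lambda;\Gank(q))=\varkappa(\alpha)^{p}\lambda^{-p}+o(\lambda^{-p})$ as $\lambda\to 0$, so that $\Delta_{p}(\Gank(q))=\delta_{p}(\Gank(q))=\varkappa(\alpha)^{p}$ by \eqref{a4c}. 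Consequently $\Delta_{p}(A)=\delta_{p}(A)=\Delta_{p}(A^{*})=\delta_{p}(A^{*})=|b|^{p}\varkappa(\alpha)^{p}$, and Theorem~\ref{thm.b2} yields $\Delta_{p}(\Gank(h))=\delta_{p}(\Gank(h))=2|b|^{p}\varkappa(\alpha)^{p}$. Since $p\alpha=1$, this is precisely $s_{n}(\Gank(h))=\bigl(2|b|^{p}\varkappa(\alpha)^{p}\bigr)^{\alpha}n^{-\alpha}+o(n^{-\alpha})=2^{\alpha}\varkappa(\alpha)|b|\,n^{-\alpha}+o(n^{-\alpha})$, which is \eqref{a14}.

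I expect the only real obstacle to be the asymptotic-orthogonality step: one has to recognise that the cross term $A^{2}$, although not literally of the form $\Hank(\sigma_{1})^{*}\Hank(\sigma_{2})$, can be brought into that shape via the Hankel identity $\Gank(q_{\zeta})=\Gank(q_{\bar\zeta})^{*}$, and that Lemma~\ref{lma.b5} then applies exactly because $\Im\zeta\neq 0$ forces $\zeta\neq\bar\zeta$. Everything else is routine manipulation of counting functions and unitary equivalences.
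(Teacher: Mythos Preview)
Your proof is correct and follows essentially the same route as the one the paper has in mind: the statement is quoted from \cite{III}, and the paper explicitly says that the proof there ``uses the localization principle for singular values (which is the analogue of Theorem~\ref{thm.a3})''. Concretely, one splits $h=b\,q_{\zeta}+\bar b\,q_{\bar\zeta}$, observes that the two symbols $\omega_{\zeta}$, $\omega_{\bar\zeta}$ have disjoint singular supports (Lemma~\ref{lma.d1}), applies Lemma~\ref{lma.b5} to get the asymptotic orthogonality required in Theorem~\ref{thm.b2}, and reads off the singular-value asymptotics of each piece from Theorem~\ref{thm.a6} via the unitary $D=\diag\{\zeta^{-j}\}$ --- exactly what you do. Your observation that $\Gank(q_{\zeta})=\Gank(q_{\bar\zeta})^{*}$, which lets you write $A^{2}$ as $b^{2}\Hank(\omega_{\bar\zeta})^{*}\Hank(\omega_{\zeta})$ and invoke Lemma~\ref{lma.b5} directly, is the right trick and is precisely how the asymptotic-orthogonality condition is verified in this setting.
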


In view of the symmetry principle,  this results yields 
the asymptotics of the eigenvalues of $\Gank(h)$.

\begin{theorem}\label{CC} 
Let $\Gank(h)$ be the same as in Theorem~$\ref{thm.a4}$. 
Then  
\begin{equation}
\lambda^\pm_n(\Gank(h))= 
  \varkappa(\alpha) |b| n^{-\alpha} + o(n^{-\alpha}).
\label{eq:CC}
\end{equation}
\end{theorem}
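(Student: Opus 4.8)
The plan is to combine the singular value asymptotics of Theorem~\ref{thm.a4} with the symmetry principle in the form of Theorem~\ref{thm.a1}. First I would recall that the sequence $h$ in Theorem~\ref{thm.a4} is, up to the first two terms, the real sequence
$$
h(j)=2\Re(b\zeta^{-j})j^{-1}(\log j)^{-\alpha},\quad j\geq2,
$$
so $\Gank(h)$ is a compact self-adjoint operator. Writing $b=|b|e^{i\psi}$ and $\zeta=e^{i\varphi}$ with $\varphi\in(0,\pi)$ (using $\Im\zeta\neq0$), we have $2\Re(b\zeta^{-j})=2|b|\cos(\varphi j-\psi)$, which is exactly the single oscillating term in \eqref{eq:AS} with $L=1$.

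Next I would produce a symbol for $\Gank(h)$. By Lemma~\ref{lma.d1} with this $\zeta$ and exponent $\alpha$, the function $\omega_\zeta$ of \eqref{symb1} satisfies $\wh\omega_\zeta(j)=q_\zeta(j)=\zeta^{-j}j^{-1}(\log j)^{-\alpha}$ for $j\geq0$, and $\omega_\zeta\in C^\infty(\bbT\setminus\{\zeta\})$. Then the symbol
$$
\omega=b\,\omega_\zeta+\overline b\,\omega_{\overline\zeta}
$$
has Fourier coefficients $\wh\omega(j)=b\zeta^{-j}j^{-1}(\log j)^{-\alpha}+\overline b\,\overline\zeta^{-j}j^{-1}(\log j)^{-\alpha}=h(j)$ for $j\geq2$ (and the values at $j=0,1$ are irrelevant, since changing finitely many Fourier coefficients changes the Hankel operator by a finite-rank, hence $\Sch^0_{p,\infty}$, perturbation, which does not affect $\Delta_p^\pm$ or $\delta_p^\pm$ by Lemma~\ref{lma.b1}). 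One checks that $\omega$ satisfies the symmetry condition \eqref{a2}: indeed $\overline{\omega_\zeta(\mu)}=\omega_{\overline\zeta}(\overline\mu)$ follows directly from \eqref{symb1}, so $\overline{\omega(\mu)}=\overline b\,\overline{\omega_\zeta(\mu)}+b\,\overline{\omega_{\overline\zeta}(\mu)}=\overline b\,\omega_{\overline\zeta}(\overline\mu)+b\,\omega_\zeta(\overline\mu)=\omega(\overline\mu)$. Moreover $\sing\supp\omega\subseteq\{\zeta,\overline\zeta\}$, which does not contain $1$ or $-1$ because $\varphi\in(0,\pi)$.

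Now Theorem~\ref{thm.a1} applies to $H(\omega)$ (equivalently to $\Gank(h)$, via \eqref{a9}, \eqref{a12}): with $p=1/\alpha$,
$$
\Delta_p^+(\Gank(h))=\Delta_p^-(\Gank(h))=\tfrac12\Delta_p(\Gank(h)),\qquad
\delta_p^+(\Gank(h))=\delta_p^-(\Gank(h))=\tfrac12\delta_p(\Gank(h)).
$$
By Theorem~\ref{thm.a4}, $s_n(\Gank(h))=2^\alpha\varkappa(\alpha)|b|n^{-\alpha}+o(n^{-\alpha})$, which in the language of the functionals reads $\Delta_p(\Gank(h))=\delta_p(\Gank(h))=\big(2^\alpha\varkappa(\alpha)|b|\big)^{1/\alpha}=2\big(\varkappa(\alpha)|b|\big)^{1/\alpha}$. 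Hence $\Delta_p^\pm(\Gank(h))=\delta_p^\pm(\Gank(h))=\big(\varkappa(\alpha)|b|\big)^{1/\alpha}$, which by the standing dictionary between the functionals and power asymptotics is precisely \eqref{eq:CC}.

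The only genuine point requiring care — and thus the main (modest) obstacle — is verifying that the explicitly constructed $\omega=b\,\omega_\zeta+\overline b\,\omega_{\overline\zeta}$ does the job: that it is a legitimate $L^\infty$ symbol, that its Fourier coefficients agree with $h(j)$ for $j\geq 2$, that it satisfies \eqref{a2}, and that its singular support avoids $\pm1$. All of this is routine given Lemma~\ref{lma.d1}, but it is the step that links the abstract symmetry principle to the concrete matrix sequence $h$. Everything else is a direct citation of Theorems~\ref{thm.a1} and \ref{thm.a4} together with the standing translation between the asymptotic relations and the functionals $\Delta_p^\pm$, $\delta_p^\pm$.
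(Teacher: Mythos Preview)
Your proof is correct and follows essentially the same route as the paper: construct the symbol $\omega=b\,\omega_\zeta+\overline b\,\omega_{\overline\zeta}$ via Lemma~\ref{lma.d1}, observe that its singular support $\{\zeta,\overline\zeta\}$ avoids $\pm1$, apply the symmetry principle (Theorem~\ref{thm.a1}) to halve $\Delta_p$ and $\delta_p$, and then feed in the singular-value asymptotics from Theorem~\ref{thm.a4}. Your extra bookkeeping about the Fourier coefficients at $j=0,1$ is in fact unnecessary, since $q_\zeta(0)=q_\zeta(1)=0$ and $h(0)=h(1)=0$ by definition, so $\wh\omega(j)=h(j)$ holds for all $j\geq0$; but the finite-rank remark is harmless.
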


\begin{proof}
Let the symbol $\omega_\zeta$ be defined by formula \eqref{symb1} and let $\omega=b\omega_\zeta+\overline{b}\omega_{\overline{\zeta}}$. Then $\wh{\omega} (j) = h(j)$ for $j\geq0$, and hence the operators
  $\Gamma(h)$ and $H(\omega)$ are unitarily equivalent.
 By Lemma~\ref{lma.d1}, the singular support of  $\omega$ consists of the pair of points $\zeta,\overline{\zeta}$.  
Therefore by the symmetry principle
(Theorem~\ref{thm.a1}) we have 
$$
\Delta_p^+(\Gamma(h))=\Delta_p^-(\Gamma(h))=\tfrac12\Delta_p(\Gamma(h)),
$$
and similarly for the lower limits. The asymptotic relation \eqref{a14} for the singular values 
can be equivalently rewritten as 
$\Delta_p(\Gamma(h))=2\varkappa(\alpha)^p |b|^p$, and thus we obtain
$$
\Delta_p^+(\Gamma(h))=\Delta_p^-(\Gamma(h))=\varkappa(\alpha)^p |b|^p
$$
and similarly for the lower limits. This yields \eqref{eq:CC}.
\end{proof}

In order to estimate the error terms, we use the following result of \cite{I}.  
Let $[\alpha]$ be the integer part of $\alpha$, 
$[\alpha]=\max\{m\in\bbZ: m\leq\alpha\}$.
We set 
\begin{equation}
M(\alpha)=
\begin{cases}
[\alpha]+1, & \text{ if } \alpha\geq1/2,
\\
0, & \text{ if } \alpha<1/2.
\end{cases}
\label{a16}
\end{equation}
For a sequence $g=\{g(j)\}_{j=0}^\infty$, we define iteratively the sequences
$g^{(m)}=\{g^{(m)}(j)\}_{j=0}^\infty$, $m=0,1,2,\dots$, by setting $g^{(0)}(j)=g(j)$ for all $j$ 
and 
$$
g^{(m+1)}(j)=g^{(m)}(j+1)-g^{(m)}(j), 
\quad j\geq0.
$$ 

Before stating the next result, let us comment that  for the sequence  $q$ defined by \eqref{a24},
the sequences $q^{(m)}$ for all $m\geq1$   satisfy 
$$
q^{(m)}(j)=O(j^{-1-m}(\log j)^{-\alpha}), \quad j\to\infty.
$$
On the other hand,   the sequence    \eqref{symb} with $\zeta\neq 1$ satisfies  only the condition
$q_{\zeta}^{(m)}(j)=O(j^{-1}(\log j)^{-\alpha})$ for any $m\geq1$. Nevertheless we have the following assertion.

\begin{theorem}\label{thm.a5a}\cite[Theorem~2.3]{I}
Let $\alpha>0$ and  let $M=M(\alpha)$ be the integer given by \eqref{a16}.
Let $g$ be a complex valued sequence such that
\begin{equation}
g^{(m)}(j)=o( j^{-1-m}(\log j)^{-\alpha}), \quad 
j\to\infty, 
\label{a16a}
\end{equation}
  for all $m=0,\dots,M$. Pick
 any   $\zeta \in\bbT$ and put $g_{\zeta} (j)= \zeta^{-j} g(j)$.
Then $ s_n(\Gank( g_{\zeta} ))=   o(n^{-\alpha})$.
\end{theorem}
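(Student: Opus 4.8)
The plan is to deduce the result from a quantitative $O$-type version via a standard truncation argument, and to prove that version by a dyadic decomposition.

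First I would remove the twist by $\zeta$. Since $|\zeta|=1$, the diagonal operator $D=\diag(\zeta^{-j})_{j\ge0}$ is unitary in $\ell^2(\bbZ_+)$ and $\Gank(g_\zeta)=D\,\Gank(g)\,D$, so $s_n(\Gank(g_\zeta))=s_n(\Gank(g))$; hence we may take $\zeta=1$ and must show $\Delta_{1/\alpha}(\Gank(g))=0$. The key is the quantitative estimate: if $|g^{(m)}(j)|\le B\,j^{-1-m}(\log j)^{-\alpha}$ for $j\ge2$ and all $m=0,\dots,M(\alpha)$ (with $M(\alpha)$ as in \eqref{a16}), then $\Delta_{1/\alpha}(\Gank(g))\le(C(\alpha)B)^{1/\alpha}$. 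Granting this, the theorem follows at once: fix a smooth cut-off $\phi$ vanishing near $0$ and equal to $1$ near $\infty$, set $\phi_R(j)=\phi(j/R)$, and split $g=(1-\phi_R)g+\phi_R g$. The first term is finitely supported, so $\Gank((1-\phi_R)g)$ is of finite rank and lies in $\Sch_{p,\infty}^0$ for every $p$; by Lemma~\ref{lma.b1}, $\Delta_{1/\alpha}(\Gank(g))=\Delta_{1/\alpha}(\Gank(\phi_R g))$. The discrete Leibniz rule and the hypothesis \eqref{a16a} show $\phi_R g$ obeys the quantitative estimate's assumptions with constant $B=\eps_R$, where $\eps_R\to0$ as $R\to\infty$; thus $\Delta_{1/\alpha}(\Gank(g))\le(C(\alpha)\eps_R)^{1/\alpha}$ for all $R$, and $R\to\infty$ gives $\Delta_{1/\alpha}(\Gank(g))=0$.

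For the quantitative estimate I would decompose dyadically: write $1=\sum_{n\ge n_0}\chi(2^{-n}j)$ on $j\ge1$ with $\chi$ supported in $[1/2,2]$, so $g=\sum_n g_n$ with $g_n(j)=g(j)\chi(2^{-n}j)$ supported in a block of length $\asymp2^n$ about $2^n$; the Leibniz rule gives $|g_n^{(m)}(j)|\le C_m B\,2^{-n(1+m)}n^{-\alpha}$ for $m\le M(\alpha)$. Rescaling each block to unit size (transparent in the continuous representation, where dilations are unitary; in the discrete one use instead $\Gank(h)(S-I)^m=\Gank(h^{(m)})$, $S$ the shift), $\Gank(g_n)$ is unitarily equivalent to $B\,n^{-\alpha}$ times a Hankel operator with kernel supported in a fixed interval and $M(\alpha)$ uniformly bounded differences, whence Hilbert--Schmidt-type bounds after differencing give $s_j(\Gank(g_n))\le C_M B\,n^{-\alpha}\min(1,j^{-\beta})$, uniformly in $n$, with a decay rate $\beta=\beta(M(\alpha))>\alpha$; for $\alpha<1/2$ one has $M(\alpha)=0$ and already $\|\Gank(g_n)\|_{HS}^2\lesssim\sum_j(j+1)|g_n(j)|^2\lesssim B^2n^{-2\alpha}$, giving $\beta=1/2>\alpha$. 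Each $g_n$ is finitely supported, so the symbol of $\Gank(g_n)$ is $C^\infty$ and, by Lemma~\ref{lma.b5}, the $\Gank(g_n)$ are pairwise asymptotically orthogonal. Summing is then carried out by a counting-function computation: using the elementary fact $\sum_{n:\,Bn^{-\alpha}>\lambda}(Bn^{-\alpha}/\lambda)^{1/\beta}\asymp(CB)^{1/\alpha}\lambda^{-1/\alpha}$ (where $\beta>\alpha$ is exactly what makes the exponent correct) together with an infinite-family version of the asymptotic-orthogonality principle of Theorem~\ref{thm.b2}, one obtains $n(\lambda;\Gank(g))\le C(\alpha)(B/\lambda)^{1/\alpha}$.

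The main obstacle is precisely this last summation. The individual pieces are harmless ($\Gank(g_n)\in\Sch_0$), but $\sum_n\Gank(g_n)$ genuinely leaves $\Sch_0$ and the $\Sch_{1/\alpha,\infty}$-quasinorm of the partial sums grows (consistent with $\sum_n n^{-\alpha}$ diverging for $\alpha\le1$), so the bound cannot come from subadditivity of the quasinorm; one must couple the per-scale size bound with the true extra decay $s_j(\Gank(g_n))\lesssim Bn^{-\alpha}j^{-\beta}$ and a quantitative infinite-family asymptotic orthogonality to push the approximate direct-sum structure through the counting function. The value $M(\alpha)=[\alpha]+1$ (for $\alpha\ge1/2$) is chosen exactly so that the resulting $\beta$ strictly exceeds $\alpha$, the small surplus absorbing the boundary effects of the cut-offs, and it is the factor $(\log j)^{-\alpha}$, which on the $n$-th block is $\asymp n^{-\alpha}$, that converts the sum over scales into the power $\lambda^{-1/\alpha}$.
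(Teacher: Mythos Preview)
The paper does not prove this theorem; it is quoted from the earlier paper \cite{I} (Theorem~2.3 there), so there is no ``paper's own proof'' to compare against. Your reduction steps are sound: the identity $\Gank(g_\zeta)=D\Gank(g)D$ with $D=\diag(\zeta^{-j})$ is correct and removes the twist, and the passage from the $o$-hypothesis to a quantitative $O$-statement by cutting off and invoking Lemma~\ref{lma.b1} is standard and valid. The dyadic decomposition with per-block bounds is also the natural route, and your computation for $\alpha<1/2$ (Hilbert--Schmidt on each block giving $\beta=1/2>\alpha$) is right, as is the arithmetic identity $\sum_{n:\,Bn^{-\alpha}>\lambda}(Bn^{-\alpha}/\lambda)^{1/\beta}\asymp (B/\lambda)^{1/\alpha}$ when $\beta>\alpha$.

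The genuine gap is the summation step, which you flag yourself but do not close. Two concrete problems: (i) Your appeal to Lemma~\ref{lma.b5} is vacuous here---each $g_n$ is a finite sequence, so its symbol is a trigonometric polynomial with empty singular support, and the lemma gives only the trivial statement that a product of two finite-rank operators lies in $\Sch_0$; it yields no quantitative almost-orthogonality uniform in $n$. (ii) You then invoke ``an infinite-family version of Theorem~\ref{thm.b2}'', but no such statement is available in the paper, and the naive approaches fail: the $\Sch_{p,\infty}$ quasi-norms of the partial sums need not stay bounded, and splitting off a tail $\sum_{n>N}\Gank(g_n)$ in operator norm does not work either, since $\norm{\Gank(g_n)}\lesssim Bn^{-\alpha}$ is not summable for $\alpha\le 1$. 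What is actually needed is a structural almost-orthogonality specific to Hankel operators with kernels supported on disjoint dyadic intervals (the ranges and cokernels of the $\Gank(g_n)$ are nested in $\ell^2([0,2^{n+1}])$, and after grouping in residue classes one can exploit this), or an interpolation argument between the block-wise norm bound and the block-wise $\Sch_q$ bound with $q<1/\alpha$. Until one of these is supplied, the proposal is an outline rather than a proof.
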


\subsection{Asymptotics of eigenvalues}

Our main result below concerns the real sequences of the form \eqref{eq:AS}.

\begin{theorem}\label{thm.a5}
Let $\alpha>0$, $p=1/\alpha$; let $\varphi_1,\dots,\varphi_L\in(0,\pi)$ be 
distinct numbers and let $\psi_1,\dots\psi_L\in \bbR$ as well as
${\sf b}_{1},{\sf b}_{-1}\in\bbR$, 
$b_1,\dots,b_L\in\bbR$ be arbitrary. 
Let $h$ be a sequence of real numbers such that 
\begin{multline}
h(j)=
{\sf b}_1j^{-1}(\log j)^{-\alpha}
+ {\sf g}_{1}(j) + (-1)^{j} \bigl(
{\sf b}_{-1} j^{-1} (\log j)^{-\alpha} + {\sf g}_{-1}(j) \bigr)
\\
+
2\sum_{\ell=1}^L \bigl(b_\ell j^{-1} (\log j)^{-\alpha}+g_{\ell}(j)\bigr)\cos(\varphi_\ell j-\psi_\ell),
\quad 
j\geq2,
\label{a17}
\end{multline}
where all error terms ${\sf g}_{1}, {\sf g}_{-1},g_{1}, \dots, g_{L}$   
satisfy condition  \eqref{a16a} for all $m=0,1,\dots,M (\alpha)$ $(M(\alpha)$ is given by \eqref{a16}$)$. 
Then the eigenvalues of the Hankel operator $\Gank(h)$ 
satisfy the asymptotic relation \eqref{a3}
with the coefficients $a^\pm$  defined by \eqref{a19}. 
\end{theorem}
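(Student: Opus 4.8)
The plan is to combine the three main tools already assembled in the paper: the localization principle (Theorem~\ref{thm.a3}), the symmetry principle (Theorem~\ref{thm.a1}), and the error-term estimate (Theorem~\ref{thm.a5a}), together with the known asymptotics for single-term sequences (Theorem~\ref{thm.a6}, Corollary~\ref{cor.a6}, and Theorem~\ref{CC}). First I would pass from the Hankel operator $\Gank(h)$ to a Hankel operator $H(\omega)$ on $H^2(\bbT)$ by constructing a convenient symbol. Writing $\zeta_\ell=e^{i\varphi_\ell}$ and using $2b_\ell\cos(\varphi_\ell j-\psi_\ell)=e^{-i\psi_\ell}b_\ell\zeta_\ell^{-j}+e^{i\psi_\ell}b_\ell\overline{\zeta_\ell}^{\,-j}$, I would set (via Lemma~\ref{lma.d1})
\begin{equation}
\omega=\mathsf{b}_1\omega_1+\mathsf{b}_{-1}\omega_{-1}+\sum_{\ell=1}^L\bigl(e^{-i\psi_\ell}b_\ell\,\omega_{\zeta_\ell}+e^{i\psi_\ell}b_\ell\,\omega_{\overline{\zeta_\ell}}\bigr),
\label{eq:prop-symbol}
\end{equation}
so that $\wh\omega(j)$ equals the main (non-error) part of $h(j)$ for all $j\geq0$. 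One checks the symmetry condition \eqref{a2} holds (the coefficients are real and the pairs $\zeta_\ell,\overline{\zeta_\ell}$ are conjugate), so $H(\omega)$ is self-adjoint, and $\sing\supp\omega\subset\{1,-1,\zeta_1,\overline{\zeta_1},\dots,\zeta_L,\overline{\zeta_L}\}$, a finite set of distinct points.

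Next I would dispose of the error terms. The difference $h-\wh\omega$ is a sum of sequences of the form $\zeta^{-j}g(j)$ (and $(-1)^j\mathsf{g}_{\pm1}(j)$, and $\cos(\varphi_\ell j-\psi_\ell)g_\ell(j)$ split as above) where each $g$ satisfies \eqref{a16a} for $m=0,\dots,M(\alpha)$. By Theorem~\ref{thm.a5a} each such piece gives an operator with $s_n=o(n^{-\alpha})$, hence lies in $\Sch_{p,\infty}^0$; summing finitely many, $\Gank(h)-H(\omega)\in\Sch_{p,\infty}^0$. By Lemma~\ref{lma.b1} this perturbation does not affect $\Delta_p^\pm$ or $\delta_p^\pm$, so it suffices to compute these functionals for $H(\omega)$.

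Then I would apply the localization principle. The symbol \eqref{eq:prop-symbol} is a sum of pieces with pairwise disjoint singular supports: $\mathsf{b}_1\omega_1$ (singular only at $1$), $\mathsf{b}_{-1}\omega_{-1}$ (singular only at $-1$), and for each $\ell$ the piece $\omega^{(\ell)}:=e^{-i\psi_\ell}b_\ell\omega_{\zeta_\ell}+e^{i\psi_\ell}b_\ell\omega_{\overline{\zeta_\ell}}$ (singular only at $\{\zeta_\ell,\overline{\zeta_\ell}\}$). Each summand individually satisfies \eqref{a2}. Theorem~\ref{thm.a3} then gives
\begin{equation}
\Delta_p^\pm(H(\omega))=\delta_p^\pm(H(\omega))=\Delta_p^\pm(\mathsf{b}_1H(\omega_1))+\Delta_p^\pm(\mathsf{b}_{-1}H(\omega_{-1}))+\sum_{\ell=1}^L\Delta_p^\pm(H(\omega^{(\ell)})),
\label{eq:prop-loc}
\end{equation}
provided each term on the right has coinciding upper and lower limits. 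For $\mathsf{b}_1H(\omega_1)$: since $\wh{\omega_1}(j)=q(j)$ and $H(\omega_1)$ is unitarily equivalent to $\Gank(q)$, Theorem~\ref{thm.a6} gives $\lambda_n^+=\varkappa(\alpha)n^{-\alpha}+o(n^{-\alpha})$, $\lambda_n^-=o(n^{-\alpha})$; multiplying the symbol by the scalar $\mathsf{b}_1$ scales eigenvalues by $\mathsf{b}_1$ (with sign bookkeeping: positive eigenvalues stay positive iff $\mathsf{b}_1>0$), so $\Delta_p^+=\delta_p^+=(\mathsf{b}_1)_+^{1/\alpha}\varkappa(\alpha)^p$ and $\Delta_p^-=\delta_p^-=(\mathsf{b}_1)_-^{1/\alpha}\varkappa(\alpha)^p$, using $\mathsf{b}_\pm=(|\mathsf{b}|\pm\mathsf{b})/2$. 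The $\mathsf{b}_{-1}$ term is handled identically via Corollary~\ref{cor.a6}. For each oscillating term $H(\omega^{(\ell)})$: this is exactly the operator treated in Theorem~\ref{CC} (with $b=e^{-i\psi_\ell}b_\ell$, $|b|=|b_\ell|$), giving $\lambda_n^\pm=\varkappa(\alpha)|b_\ell|n^{-\alpha}+o(n^{-\alpha})$, i.e.\ $\Delta_p^\pm=\delta_p^\pm=|b_\ell|^{1/\alpha}\varkappa(\alpha)^p$. Substituting into \eqref{eq:prop-loc}:
\begin{equation}
\Delta_p^\pm(\Gank(h))=\delta_p^\pm(\Gank(h))=\varkappa(\alpha)^p\Bigl((\mathsf{b}_1)_\pm^{1/\alpha}+(\mathsf{b}_{-1})_\pm^{1/\alpha}+\sum_{\ell=1}^L|b_\ell|^{1/\alpha}\Bigr)=(a^\pm)^p,
\label{eq:prop-final}
\end{equation}
with $a^\pm$ as in \eqref{a19}. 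By the standing equivalence between the functionals and the power asymptotics, \eqref{eq:prop-final} is exactly the relation \eqref{a3}.

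\textbf{Main obstacle.} The genuinely nontrivial analytic content is all packaged into results quoted from \cite{II,III,I}, so the only real work here is bookkeeping — in particular, carefully tracking how the scalar multipliers $\mathsf{b}_{\pm1}$ (which may be negative) and the complex phases $e^{\mp i\psi_\ell}$ act on the positive vs.\ negative parts of the spectrum, and verifying that the symmetry condition \eqref{a2} is preserved by each individual summand of the symbol so that Theorem~\ref{thm.a3} applies term by term. One should also double-check the degenerate cases (some $\mathsf{b}_{\pm1}$ or $b_\ell$ equal to zero, where the corresponding piece contributes nothing and $\sing\supp$ shrinks accordingly), but these cause no difficulty since the relevant functionals simply vanish.
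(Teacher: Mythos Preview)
Your proposal is correct and follows essentially the same route as the paper's own proof: decompose the main part of $h$ into the pieces $\mathsf{h}_1$, $\mathsf{h}_{-1}$, $h_\ell$, identify the singular supports of their symbols via Lemma~\ref{lma.d1}, invoke Theorem~\ref{thm.a6}/Corollary~\ref{cor.a6}/Theorem~\ref{CC} for each piece, combine via the localization principle (Theorem~\ref{thm.a3}), and absorb the error terms with Theorem~\ref{thm.a5a} and Lemma~\ref{lma.b1}. The only cosmetic difference is that you handle the error terms before applying localization whereas the paper does so afterwards, and you phrase things throughout in terms of the symbol $\omega$ on $H^2(\bbT)$ while the paper stays mostly at the level of the sequences in $\ell^2(\bbZ_+)$; neither change affects the substance.
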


\begin{proof}
It is convenient to give the proof in terms of the functionals $\Delta_p^\pm$, $\delta_p^\pm$, 
(see \eqref{DD1} or \eqref{a4c}).
We first consider every term in the right-hand side of \eqref{a17} separately. Put
$$
{\sh}_1(j)={\sf b}_{1} j^{-1}(\log j)^{-\alpha}, 
\quad
{\sh}_{-1}(j)={\sf b}_{-1}(-1)^{j}j^{-1}(\log j)^{-\alpha}, 
$$
and for $\ell=1,\dots,L$,
$$
h_\ell(j)
=
2b_\ell\cos(\varphi_\ell j-\psi_\ell) j^{-1} (\log j)^{-\alpha}
=
2\Re(b_\ell e^{i\psi_\ell}\zeta_\ell^{-j})j^{-1} (\log j)^{-\alpha}, 
$$
where $\zeta_\ell=e^{i\varphi_\ell}$. 
By Theorem~\ref{thm.a6} and  Corollary~\ref{cor.a6}, we have
\begin{equation}
\begin{split}
\Delta_p^\pm(\Gank({\sh}_{1}))
=
\delta_p^\pm(\Gank({\sh}_{1}))
&=
 ( \varkappa(\alpha) {\sf b}_{1})_\pm^p,
\\
\Delta_p^\pm(\Gank({\sh}_{-1}))
=
\delta_p^\pm(\Gank({\sh}_{-1}))
&=
 ( \varkappa(\alpha){\sf b}_{-1})_\pm^p,
\label{d1}
\end{split}
\end{equation}
and by Theorem~\ref{CC}, we have
\begin{equation}
\Delta_p^\pm(\Gank(h_\ell))= 
\delta_p^\pm(\Gank(h_\ell))= 
( \varkappa(\alpha)\abs{b_\ell})^p , \quad \ell=1,\ldots, L.
\label{d3}
\end{equation}
It follows from Lemma~\ref{lma.d1} that the singular supports of the symbols of the operators $\Gamma ({\sh}_{1})$, $\Gamma ({\sh}_{-1})$ and
$\Gamma (h_{\ell})$ consist of the points $1$, $-1$ and of the pairs $\zeta_{\ell}$, $\overline{\zeta_{\ell}}$, respectively.
So  we can apply Theorem~\ref{thm.a3} (the localization principle for eigenvalues) to the Hankel operator $\Gamma(h_*)$ with
$$
h_*= {\sh}_1+{\sh}_{-1}+\sum_{\ell=1}^L h_\ell
$$
which yields
$$
\Delta_p^\pm(\Gank(h_*))
=
\delta_p^\pm(\Gank(h_*))
=
\Delta_p^\pm(\Gank({\sh}_{1}))
+
\Delta_p^\pm(\Gank({\sh}_{-1}))
+
\sum_{\ell=1}^L \Delta_p^\pm(\Gank(h_\ell)).
$$
Now relations   \eqref{d1} and \eqref{d3} imply that
\begin{equation}
\Delta_p^\pm(\Gank(h_*))
=\delta_p^\pm(\Gank(h_*))
=\varkappa(\alpha)^p\bigl(({\sf b}_{1})_\pm^p+({\sf b}_{-1})_\pm^p+\sum_{\ell=1}^L \abs{b_\ell}^p\bigr).
\label{YZ}
\end{equation}

Finally, set $g=h -h_{*}$. Using the representation \eqref{a17} and our conditions on  ${\sf g}_{1}, {\sf g}_{-1},g_{1}, \dots, g_{L}$ and applying Theorem~\ref{thm.a5a}, we see that $ \Gank(g)\in\Sch_{p,\infty}^0$. Since
$\Gank(h)=\Gank(h_*)+\Gank(g)$, it follows from   Lemma~\ref{lma.b1}  that 
$$
\Delta_p^\pm(\Gank(h))=\Delta_p^\pm(\Gank(h_*))\quad 
\text{and} 
\quad \delta_p^\pm(\Gank(h))=\delta_p^\pm(\Gank(h_*)).
$$ 
Thus by \eqref{YZ}, we obtain the relations \eqref{a3}, \eqref{a19}. 
\end{proof}

\subsection{Spectral asymptotics for Hankel operators in the Hardy space}

Here we give an analogue of Theorem~\ref{thm.a5} in terms of the Hankel operators $H(\omega)$ in the space $H^2(\bbT)$. They are linked to the operators $\Gamma (h)$ by formulas \eqref{a9}, \eqref{a12}. 
Below we consider a class of symbols $\omega$ whose Fourier coefficients satisfy the asymptotic relation \eqref{a17}. 
All necessary calculations have  already been done in \cite{IV}. Here we only state the results.  Note that our notation is slightly different from that in \cite{IV} because in \cite{IV} Hankel operators were considered in a different representation.

We consider a class of functions $\omega(\mu)$ that are smooth on the unit circle except at some finite number of points   where they have logarithmic singularities. We describe an admissible singularity supposing first that it is located at the point $\mu=1$.  
Let us introduce an even function 
$\chi_0\in C^\infty(\bbR)$  satisfying the condition
$$
\chi_0(\theta)=
\begin{cases}
1& \text{for $\abs{\theta}\leq c_1$,}
\\
0& \text{for  $\abs{\theta}\geq c_2$,}
\end{cases}
$$
with $c_1\in (0,c_2)$ and sufficiently small  $c_2$. We accept the following sufficiently general assumption.

\begin{assumption}\label{AsLog}
Let $\alpha>0$, and let $v_{j, \sigma}(\theta)$ and $u_{j, \sigma}(\theta)$, $j=0,1$, $\sigma=\pm$, be complex valued $C^\infty$ functions of $\theta\in \bbR$ such that
\begin{equation}
 v_{0,+}  (0) =v_{0,-}  (0)=: v_{0} .
 \label{eq:bbx}
\end{equation}
Then the function $\omega$ is defined by  the relation
\begin{equation}
\omega(e^{i\theta})
=  
\sum_{j=0,1}\sum_{\sigma=\pm} 
v_{j, \sigma} (\theta)(-\log\abs{\theta} +  u_{j, \sigma }(\theta))^{1-j-\alpha}\1_\sigma(\theta) \chi_0(\theta), 
\quad \theta\in(-\pi,\pi].
\label{b4cx}
\end{equation}
\end{assumption}

Here $c_{2}$ is chosen so small   that $\theta=0$ is the only singularity of  function \eqref{b4cx},
that is, 
$$-\log\abs{\theta} +  u_{j, \sigma }(\theta)\neq 0  \quad \text{if} \quad \theta\in [-c_{2}, c_{2}]  
$$
for $ j=0,1$, $\sigma=\pm$. 
The branch of the function $z^{j-\alpha} =e^{(j-\alpha)\log z}$ where $z=-\log\abs{\theta} +  u_{j,\sigma} (\theta)$ 
 is fixed by the condition 
$$
\arg (-\log\abs{\theta} +  u_{j,\sigma} (\theta))\to 0 \quad \text{as}  \quad \theta\to 0.
$$

We emphasize that because of the additional factor $\log|\theta|$, the terms in \eqref{b4cx} corresponding to $j=0$
  are more singular than the terms   corresponding to $j=1$. However due to the condition  \eqref{eq:bbx} the sum  of the terms with $j=0$ over $\sigma=+,-$ is essentially an even function of $\theta$. It can be deduced from this fact that the contribution of this sum to the asymptotics of the Fourier coefficients is of the same order as that of the terms     corresponding to $j=1$. 

For a function $\omega$ satisfying Assumption~\ref{AsLog}, we put
\begin{equation}
b=
(1-\alpha) v_0 \bigl( \tfrac12 + \tfrac1{2\pi i}  (u_{0,+ } (0)- u_{0,- } (0)) \bigr)
+    \tfrac1{2\pi i} (v_{1,+}  (0) - v_{1,-} (0) ) .
\label{b4ab}
\end{equation}
If $\omega(\overline\mu)=\overline{\omega(\mu)}$, then it follows from equality \eqref{b4cx}  that necessarily
$$
v_0 = \overline{v}_0,   \quad
u_{0,+ } (0)= \overline{u_{0,- } (0)}, \quad    v_{1,+ } (0)=\overline{v_{1,- } (0)} .
$$
In  this case $b= {\sf b} $ where
 \begin{equation}
{\sf b} 
=
(1-\alpha)v_0 \bigl( \tfrac12 +  \tfrac1\pi \Im u_{0,+ }(0)   \bigr)
+    \tfrac1\pi\Im v_{1,+}  (0)  , \quad v_{0}=\overline{v}_{0}, 
\label{b4abS}
\end{equation}
is real.

From the analytic point of view we rely on the following assertion.  

\begin{theorem}\label{lma.be}\cite[Theorem 3.2]{IV}
Under Assumption~$\ref{AsLog}$,  the Fourier coefficients $\wh\omega (j)$ of $\omega (\mu)$ admit  the   representation
\begin{equation}
\wh\omega ( j)
=
b    j^{-1}(\log j)^{-\alpha}+g( j), 
\label{eq:diff}
\end{equation}
where the coefficient $b$ is given by formula \eqref{b4ab} and the error term $g (j)$ satisfies the estimates 
$$
g ^{(m)} ( j)=O \bigl(j^{-1-m}(\log j)^{-\alpha-1}\bigr), \quad j \to \infty,
$$
for all $m\geq0$. 
\end{theorem}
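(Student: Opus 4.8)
The plan is to reduce the computation of the Fourier coefficients $\wh\omega(j)$ to an oscillatory-integral analysis of $\omega(e^{i\theta})$ near $\theta=0$, exploiting the fact that the cutoff $\chi_0$ localizes the singularity, so that away from $\theta=0$ the function is $C^\infty$ and its Fourier coefficients decay faster than any power of $j$. Thus it suffices to study $\wh\omega(j) = \frac{1}{2\pi}\int_{-\pi}^{\pi} \omega(e^{i\theta}) e^{-ij\theta}\,d\theta$ with $\omega$ replaced by its expression \eqref{b4cx}, and to extract the leading term $b\,j^{-1}(\log j)^{-\alpha}$ together with the error estimate on all finite differences $g^{(m)}$.

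First I would treat each of the four summands in \eqref{b4cx} (indexed by $j=0,1$ and $\sigma=\pm$) and use the standard fact that multiplication of the $\theta$-variable symbol by a smooth function contributes only lower-order corrections: more precisely, I would Taylor-expand the smooth factors $v_{j,\sigma}(\theta)$ and $u_{j,\sigma}(\theta)$ at $\theta=0$, keeping $v_{j,\sigma}(0)$, $u_{j,\sigma}(0)$ as the leading constants and absorbing the remainders (which vanish at $\theta=0$, hence gain a power of $\theta$) into the error term $g$. The core computation is then the asymptotics, as $n\to\infty$, of the model integrals
\begin{equation}
\int_0^{c_2} (-\log\theta + c)^{\beta} e^{\mp i n\theta}\,d\theta, \qquad \beta = 1-j-\alpha,
\label{eq:model}
\end{equation}
for constants $c\in\bbC$. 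Substituting $\theta = s/n$ and analyzing $(-\log(s/n)+c)^\beta = (\log n)^\beta(1 - (\log s - c)/\log n)^\beta$, one expands in powers of $1/\log n$; the leading contribution is $(\log n)^\beta n^{-1}\int_0^\infty s^{?}e^{\mp is}\,ds$-type terms, and a careful bookkeeping produces the $n^{-1}(\log n)^{-\alpha}$ rate with the explicit constant. The role of the condition \eqref{eq:bbx}, $v_{0,+}(0)=v_{0,-}(0)=v_0$, is that the two $j=0$ terms combine: individually each would produce a term of order $n^{-1}(\log n)^{1-\alpha}$ (more singular, since $\beta = 1-\alpha$), but their sum over $\sigma$ is essentially an even function of $\theta$ up to a smoother remainder, and integration against $e^{-in\theta} = \cos(n\theta) - i\sin(n\theta)$ kills the leading even part's contribution at that order, demoting it to order $n^{-1}(\log n)^{-\alpha}$, matching the $j=1$ terms. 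This cancellation, and the precise identification of the surviving coefficient as \eqref{b4ab}, is where the factors $\tfrac12 + \tfrac1{2\pi i}(u_{0,+}(0)-u_{0,-}(0))$ and $\tfrac1{2\pi i}(v_{1,+}(0)-v_{1,-}(0))$ emerge — from the boundary/jump structure of $\1_\pm(\theta)$ and the branch choice of the logarithm.

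For the error estimates on the finite differences $g^{(m)}$, I would note that applying the difference operator $m$ times to $\wh\omega(j)$ corresponds (up to trivial factors) to inserting a factor $(e^{-i\theta}-1)^m = O(\abs{\theta}^m)$ under the integral, which gains $m$ extra powers of $\theta$; combined with the same substitution $\theta = s/n$ this yields the claimed gain of $j^{-m}$ in $g^{(m)}(j) = O(j^{-1-m}(\log j)^{-\alpha-1})$. The extra power of $\log j$ saved (i.e. $(\log j)^{-\alpha-1}$ rather than $(\log j)^{-\alpha}$) comes from the fact that $g$ is by construction the remainder after subtracting the leading $(\log n)^{-\alpha}$ term, so the next term in the $1/\log n$ expansion is $(\log n)^{-\alpha-1}$.

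The main obstacle I anticipate is the careful and uniform-in-$m$ control of the asymptotic expansion of the model integral \eqref{eq:model}, in particular handling the non-integer exponent $\beta$ and the branch of the complex power $z^\beta$ near the boundary of the contour, together with making the cancellation of the $j=0$ terms precise enough to pin down the exact constant $b$ in \eqref{b4ab}. However, all of this has already been carried out in \cite{IV} (this is \cite[Theorem 3.2]{IV}), so here it suffices to invoke that computation; the only task is to translate the notation of \cite{IV} into the present conventions, which — as remarked before the statement — differs only by the choice of representation of the Hankel operator.
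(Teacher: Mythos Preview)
The paper does not prove this theorem at all: it is simply quoted from \cite[Theorem~3.2]{IV}, with only the remark that one needs to pass from the convention $j\to-\infty$ used in \cite{IV} to $j\to\infty$ via complex conjugation. Your proposal correctly recognizes this in its final paragraph, so in that sense it matches the paper exactly --- there is nothing further to compare.

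The oscillatory-integral sketch you give beforehand is a plausible outline of the argument actually carried out in \cite{IV}, but it is not part of the present paper and is not required here; invoking the citation is all that is needed.
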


Note that in \cite{IV} the asymptotics of  $\wh\omega (j)$ was considered for $j\to-\infty$. 
In order to translate the results of \cite{IV} into the context of this paper, one needs to 
use the complex conjugation: $\wh\omega_{1} (-j) = \overline{\wh\omega (j)}$ if $ \omega_{1} (\mu) = \overline{ \omega (\mu)}$.

 We emphasize that
the leading term of the asymptotics of the Fourier coefficients of the function \eqref{b4cx} 
depends on the combination \eqref{b4ab} only.  We also note that without condition \eqref{eq:bbx} asymptotics of $\wh\omega ( j)$ would be different from  \eqref{eq:diff}.

Here we state a result about the eigenvalue   asymptotics for self-adjoint Hankel operators $H(\omega)$ with
symbols    having finitely many 
logarithmic singularities. Thus we suppose that the symbol is a sum of the functions $\omega_{\ell}(\mu/\zeta_{\ell})$ where $\zeta_{\ell}$ are distinct points of $\bbT$ and each $\omega_{\ell}$ satisfies Assumption~\ref{AsLog}.
According to the symmetry condition \eqref{a2}  if $\Im\zeta_{\ell} \neq 0$, then together with $\omega_{\ell}(\mu/\zeta_{\ell})$, the symbol necessarily contains the term $  \overline{\omega_{\ell}(\overline\mu/\zeta_{\ell})}$.

 In view of Theorem~\ref{lma.be}, the result below follows directly from Theorem~\ref{thm.a5}.

\begin{theorem}\label{HT}
Let  functions $ \phi_{1} ,  \phi_{-1} , \omega_1,\dots, \omega_L$ satisfy Assumption~$\ref{AsLog}$.  Suppose that
$$
\omega(\mu)
= 
 \phi_1(\mu )+  \phi_{-1}(-\mu )+ \sum_{\ell=1}^L\bigl(\omega_{\ell}(\mu/\zeta_{\ell}) + \overline{\omega_{\ell}(\overline\mu/\zeta_{\ell})}\bigr) +\wt \omega (\mu)
$$
 where $  \zeta_1,\dots,\zeta_L\in\bbT$ are distinct numbers with $\Im \zeta_{\ell}>0$ and the remainder $\wt \omega \in L^2  $ and $P_{+}\wt \omega $ belongs to the Besov space $ B^\alpha_{1/\alpha, 1/\alpha} (\bbT)$. 
We assume that the functions $\phi_1$,  $\phi_{-1}$ and $\wt \omega$ satisfy the symmetry condition \eqref{a2}.
Let  the numbers $b_{1}, \ldots, b_L$  be the asymptotic coefficients for the functions $\omega_1,\dots,\omega_L$, defined by \eqref{b4ab}, and let ${\sf b}_{1} $, ${\sf b}_{-1}$ be the coefficients for 
$\phi_1$, $\phi_{-1}$, defined by  \eqref{b4abS}. 
Finally, let the coefficient $a^\pm$ be given by \eqref{a19}.
Then  the Hankel operator
$H (\omega )$ is compact and its    eigenvalues   have the asymptotic behavior
$$
\lambda_n^\pm (H (\omega))= a^\pm \, n^{-\alpha}+o(n^{-\alpha})
$$
as $n\to\infty$.   
\end{theorem}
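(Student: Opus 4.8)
The plan is to reduce Theorem~\ref{HT} to Theorem~\ref{thm.a5} by computing the Fourier coefficients $\wh\omega(j)$ of the given symbol $\omega$ and checking that they have exactly the form \eqref{a17}. First I would record that the Hankel operator $H(\omega)$ depends only on the Fourier coefficients $\wh\omega(j)$ with $j\geq 0$, and that $H(\wt\omega)=\Gamma(\widehat{\wt\omega})$ where $P_+\wt\omega\in B^\alpha_{1/\alpha,1/\alpha}(\bbT)$; by the known embedding of Besov classes into the appropriate operator ideal (the membership $\Gamma(\widehat{P_+\wt\omega})\in\Sch_{1/\alpha,\infty}^0$ for $P_+\wt\omega\in B^\alpha_{1/\alpha,1/\alpha}$, as used throughout \cite{II,III,IV}), the contribution of $\wt\omega$ is negligible in the sense of Lemma~\ref{lma.b1}. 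So it suffices to handle the singular part $\phi_1(\mu)+\phi_{-1}(-\mu)+\sum_\ell(\omega_\ell(\mu/\zeta_\ell)+\overline{\omega_\ell(\overline\mu/\zeta_\ell)})$.

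Next I would apply Theorem~\ref{lma.be} to each building block. For $\phi_1$ it gives $\wh{\phi_1}(j)={\sf b}_1 j^{-1}(\log j)^{-\alpha}+{\sf g}_1(j)$ with ${\sf g}_1$ satisfying \eqref{a16a} for all $m$ (indeed with the stronger $O(j^{-1-m}(\log j)^{-\alpha-1})$ bound). For $\phi_{-1}(-\mu)$ one has $\widehat{\phi_{-1}(-\,\cdot\,)}(j)=(-1)^j\wh{\phi_{-1}}(j)$, producing the $(-1)^j({\sf b}_{-1}j^{-1}(\log j)^{-\alpha}+{\sf g}_{-1}(j))$ term. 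For $\omega_\ell(\mu/\zeta_\ell)$ with $\zeta_\ell=e^{i\varphi_\ell}$ one has $\widehat{\omega_\ell(\,\cdot\,/\zeta_\ell)}(j)=\zeta_\ell^{-j}\wh{\omega_\ell}(j)=e^{-i\varphi_\ell j}(b_\ell j^{-1}(\log j)^{-\alpha}+g_\ell(j))$, and the conjugate term $\overline{\omega_\ell(\overline\mu/\zeta_\ell)}$ contributes the complex conjugate of this for $j\geq 0$ (using $\widehat{\overline{f(\overline\mu)}}(j)=\overline{\wh f(j)}$), so the two together give $2\Re(b_\ell e^{-i\varphi_\ell j})j^{-1}(\log j)^{-\alpha}$ plus an error, which equals $2b_\ell\cos(\varphi_\ell j-\psi_\ell)j^{-1}(\log j)^{-\alpha}$ once one absorbs $\arg b_\ell$ — but here the symmetry hypotheses force $b_\ell$ to be handled as in \eqref{b4ab}; to match \eqref{a17} literally one writes $b_\ell e^{-i\varphi_\ell j}=|b_\ell|e^{i(\psi_\ell-\varphi_\ell j)}$, i.e. the $\psi_\ell$ of \eqref{a17} is $\arg b_\ell$ and the coefficient in \eqref{a19} is $|b_\ell|$ — so I would be careful to state that the $b_\ell$ of the present theorem, which may be complex, plays the role of $b_\ell e^{i\psi_\ell}$ in \eqref{a17}, so the final answer involves $|b_\ell|^{1/\alpha}$ exactly as in \eqref{a19}.

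Having assembled these, $\wh\omega(j)$ for $j\geq 0$ is precisely a sequence of the form \eqref{a17} with real coefficients ${\sf b}_{\pm1}$ (real by \eqref{b4abS} under \eqref{a2}), with $\varphi_\ell\in(0,\pi)$ distinct (since $\zeta_\ell$ are distinct with $\Im\zeta_\ell>0$), and with error terms ${\sf g}_{\pm1},g_\ell$ all satisfying \eqref{a16a} for $m=0,\dots,M(\alpha)$. Since $H(\omega)=\Gamma(\wh\omega)$ is unitarily equivalent to the matrix Hankel operator $\Gamma(h)$ with $h(j)=\wh\omega(j)$, Theorem~\ref{thm.a5} applies directly and yields $\lambda_n^\pm(H(\omega))=a^\pm n^{-\alpha}+o(n^{-\alpha})$ with $a^\pm$ given by \eqref{a19}. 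Compactness follows since $\alpha>0$ forces each $\Gamma$ in sight to lie in $\Sch_{1/\alpha,\infty}\subset$ compact operators. The main obstacle is the bookkeeping in the second paragraph: verifying that the leading-order coefficients coming out of Theorem~\ref{lma.be} are exactly ${\sf b}_{\pm1}$ and $b_\ell$ as defined in \eqref{b4abS}, \eqref{b4ab} (including the correct identification of $\psi_\ell$ with $\arg b_\ell$ and of the modulus $|b_\ell|$ entering \eqref{a19}), and that the conjugate-pair structure of the oscillating part is reproduced faithfully; everything else is a routine application of the already-established localization and symmetry machinery via Theorem~\ref{thm.a5}. Since all of this was already carried out in \cite{IV}, I would simply cite those computations and present the reduction.
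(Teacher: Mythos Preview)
Your proposal is correct and matches the paper's approach exactly: the paper states that ``In view of Theorem~\ref{lma.be}, the result below follows directly from Theorem~\ref{thm.a5},'' and you have spelled out precisely this reduction, including the handling of the Besov remainder $\wt\omega$ via Peller's characterization $\Sch_{1/\alpha}\subset\Sch_{1/\alpha,\infty}^0$ and the Fourier-coefficient bookkeeping for the shifted and conjugated terms. The only point worth tightening is your treatment of the oscillating error terms: rather than forcing them into the real form $2g_\ell(j)\cos(\varphi_\ell j-\psi_\ell)$, it is cleaner to note (as in the proof of Theorem~\ref{thm.a5}) that Theorem~\ref{thm.a5a} applies directly to each complex summand $\zeta_\ell^{-j}g_\ell(j)$ and its conjugate.
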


We refer to the book \cite{Peller}, Appendix~2,  for the precise definition of Besov classes. Note also that the conditions on the remainder  
$\wt{\omega}$ can be stated (see \cite{IV}) in a more explicit although less sharp form. For example, it suffices to suppose that 
$$
\wt\omega(\mu)
= 
 \wt\phi_1(\mu )+  \wt\phi_{-1}(-\mu )+ \sum_{\ell=1}^L\bigl(\wt\omega_{\ell}(\mu/\zeta_{\ell}) + \overline{\wt\omega_{\ell}(\overline\mu/\zeta_{\ell})}\bigr)  
$$
where $ \wt\phi_{1} ,  \wt\phi_{-1} , \wt\omega_1,\dots, \wt\omega_L$ satisfy Assumption~$\ref{AsLog}$ for some $\beta>\alpha$.

Observe that the function $\omega$ in \eqref{b4cx} is unbounded if $\alpha<1$. Nevertheless according to Theorem~\ref{HT} the corresponding operator $H(\omega)$ is compact. This is of course consistent with the Hartman theorem (see \cite{Peller}, Chapter~1.5) which guarantees that $H(\omega)$ is compact if $\omega\in \VMO (\bbT)$ (the class of functions of vanishing mean oscillation).

\section{Spectral asymptotics for Hankel operators in $L^2(\bbR_+)$}\label{sec.e}

The main result of this section is stated as Theorem~\ref{thm.a8} where kernels $\bh (t)$ are singular both for $t\to \infty$ and for $t\to 0$. We also consider   (see Theorem~\ref{sing}) kernels   with singularities at   points $t_{0}>0$  (instead of $t_{0}=0$).

\subsection{Basic definitions}

Integral Hankel operators $\bGank(\bh)$ in the space  $L^2(\bbR_+)$
  are
formally defined by the relation
$$
(\bGank(\bh)\bu)(t)=\int_0^\infty \bh(t+s)\bu (s)ds, \quad {\bf u}\in C_{0}^\infty (\bbR_{+}),
$$
where $\bh\in L^1_\loc(\bbR_+)$; this function is called the \emph{kernel} of the Hankel operator $\bGank(\bh)$. 
Under the assumptions below the operator $\bGank(\bh)$ are compact.
Of course the operator $\bGank(\bh)$  is self-adjoint 
if and only if  the function $\bh(t)$ is real valued.

Similarly to the discrete case, bounded Hankel operators  $\bGank(\bh)$ are  unitarily equivalent to the operators $\bHank( {\bomega} )$ defined by formula \eqref{b6}
in the Hardy space $H^2(\bbR)$:
\begin{equation}
\Phi \bHank( {\bomega} )\Phi^* = \bGank(\bh)
\quad \text{ if }\quad
\bh(t)=\frac1{\sqrt{2\pi}}  \wh{\bomega} (t)
\quad \text{for $t>0$,}
\label{e4}
\end{equation}
where $\Phi$ is the Fourier transform \eqref{b.ft}. 
The Fourier transform $\wh{\bomega}$ of $ \bomega\in L^\infty (\bbR)$ should in general be understood 
in the sense of distributions (for example, on the Schwartz class ${\mathcal S}' (\bbR)$) and 
the precise meaning of \eqref{e4} is given by the equation
$$
(\bHank({\bomega})\Phi^* {\bf u} , \Phi^* {\bf u}) 
=
( \bGank(\bh) {\bf u},{\bf u}), \quad {\bf u}\in C_{0}^\infty (\bbR_{+}).
$$
A function $\bomega (x)$ satisfying  the second equality \eqref{e4} is known as a symbol 
of the Hankel operator $\bGank(\bh)$.

In the discrete case, the spectral asymptotics of $\Gank(h)$ is determined by the 
behavior of the sequence $h(j)$ as $j\to\infty$. 
In the continuous case, the behavior of the kernel $\bh(t)$ for $t\to \infty$ and for $t\to 0$
as well as the singularities of $\bh (t)$  at points $t_{0}>0$
contribute to the spectral properties of $\bGank(\bh)$.

\subsection{Previous results}

We fix two functions $\chi_0,\chi_\infty\in C^\infty(\bbR_+)$ such that
$$
\chi_0(x)=
\begin{cases}
1& \text{for $\abs{x}\leq c_1$,}
\\
0& \text{for  $\abs{x}\geq c_2$,}
\end{cases}
\quad
\chi_\infty(x)=
\begin{cases}
0& \text{for $\abs{x}\leq C_1$,}
\\
1& \text{for  $\abs{x}\geq C_2$,}
\end{cases}
$$
for some $0<c_1<c_2<1$ and $1<C_1<C_2$, 
and define the model kernels 
\begin{equation}
\bq_0(t)=\chi_0(t)t^{-1}(\log(1/t))^{-\alpha}, 
\quad
\bq_\infty(t)=\chi_\infty(t)t^{-1}(\log t)^{-\alpha}, 
\quad 
t>0.
\label{a26}
\end{equation}
As usual, the coefficient $\varkappa(\alpha)$ is given by \eqref{a15}.

\begin{theorem}\label{thm.a9}\cite[Theorem~3.1]{II}
Let $\alpha>0$.
Then  
$$
\lambda_n^+(\bGank(\bq_0))= 
\varkappa(\alpha)n^{-\alpha} + o(n^{-\alpha})
 \quad \text{and}
\quad
\lambda_n^+(\bGank(\bq_\infty))= 
\varkappa(\alpha)n^{-\alpha} + o(n^{-\alpha})
$$
as $n\to\infty$. Moreover,
 $$
 \lambda_n^-(\bGank(\bq_0))= 
  o(n^{-\alpha})
 \quad \text{and}
\quad
\lambda_n^-(\bGank(\bq_\infty))= 
  o(n^{-\alpha}).
$$
\end{theorem}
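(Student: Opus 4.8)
The plan is to transfer both model operators from $L^2(\bbR_+)$ to $\ell^2(\bbZ_+)$, where the single-term discrete asymptotics of Theorem~\ref{thm.a6} applies. By \eqref{e4}, $\bGank(\bq_0)$ and $\bGank(\bq_\infty)$ are unitarily equivalent to Hankel operators $\bHank(\bomega_0)$, $\bHank(\bomega_\infty)$ in $H^2(\bbR)$ with $\wh{\bomega}_0(t)=\sqrt{2\pi}\,\bq_0(t)$ and $\wh{\bomega}_\infty(t)=\sqrt{2\pi}\,\bq_\infty(t)$ for $t>0$. Now $\bq_\infty$ is $C^\infty$ on $(0,\infty)$, vanishes near $t=0$, and decays as $t^{-1}(\log t)^{-\alpha}$ without oscillation, so $\bomega_\infty$ can be chosen smooth away from the single point $x=0$; and $\bq_0$ is $C^\infty$ on $(0,\infty)$, compactly supported, with its only bad behaviour a $t^{-1}(\log(1/t))^{-\alpha}$ blow-up as $t\to0^+$, so $\bomega_0$ can be chosen smooth on all of $\bbR$ with its only singularity at $x=\infty$. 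Composing with the conformal map \eqref{b7}--\eqref{b9}, which (as in the proof of Theorem~\ref{thm.c3}) sends $x=0\mapsto\mu=-1$ and $x=\infty\mapsto\mu=1$, one gets symbols $\omega_\infty\in C^\infty(\bbT\setminus\{-1\})$ and $\omega_0\in C^\infty(\bbT\setminus\{1\})$ with $\calU\,\Hank(\omega_0)\,\calU^*=\bHank(\bomega_0)$ and likewise in the other case; consequently $\bGank(\bq_0)$ and $\bGank(\bq_\infty)$ are unitarily equivalent to the matrix Hankel operators $\Gank(h_0)$, $\Gank(h_\infty)$ with $h_0(j)=\wh\omega_0(j)$, $h_\infty(j)=\wh\omega_\infty(j)$, $j\ge0$.

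The next step is to compute these Fourier coefficients. I would verify that the singularity of $\omega_0$ at $\mu=1$, and that of $\omega_\infty$ at $\mu=-1$, are of the logarithmic type described by Assumption~\ref{AsLog} (the latter after the reflection $\mu\mapsto-\mu$), and then apply the Fourier-coefficient asymptotics of \cite{IV} (Theorem~\ref{lma.be}) together with the remainder bound of \cite{I} (Theorem~\ref{thm.a5a}) to get $h_0(j)=q(j)+r_0(j)$ and $h_\infty(j)=(-1)^jq(j)+r_\infty(j)$, with $q$ the model sequence \eqref{a24} and $\Gank(r_0),\Gank(r_\infty)\in\Sch_{1/\alpha,\infty}^0$. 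Granting that the leading coefficients here are exactly $1$ (the crux; see below), Lemma~\ref{lma.b1} shows that $\Gank(h_0)=\Gank(q)+\Gank(r_0)$ and $\Gank(h_\infty)=\Gank(q_{-1})+\Gank(r_\infty)$ (in the notation of Corollary~\ref{cor.a6}) have the same positive and negative eigenvalue asymptotics as $\Gank(q)$ and $\Gank(q_{-1})$; by Theorem~\ref{thm.a6} and Corollary~\ref{cor.a6} these are $\lambda_n^+=\varkappa(\alpha)n^{-\alpha}+o(n^{-\alpha})$ and $\lambda_n^-=o(n^{-\alpha})$ (in particular the operators are compact), and transporting this back through the unitary equivalences proves the theorem.

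The two cases are also related by an inversion symmetry, which in principle lets one carry out the symbol analysis only once. The M\"obius map $z\mapsto-1/z$ preserves the upper half-plane, and the induced unitary $U$ on $H^2(\bbR)$ satisfies $U\,\bHank(\bomega)\,U^{-1}=-\bHank\bigl(\bomega(-1/\,\cdot\,)\bigr)$ and $U\bW U^{-1}=-\bW$ (a short computation). Since $z\mapsto-1/z$ interchanges $x=0$ and $x=\infty$, and since $\sign(-1/x)=-\sign(x)$ makes the leading, Carleman-type part of $\bomega_\infty(-1/x)$ equal to $-\bomega_0(x)$ — more precisely $\bomega_0(x)+\bomega_\infty(-1/x)$ has a Hankel operator in $\Sch_{1/\alpha,\infty}^0$ — the $\bq_0$-case and the $\bq_\infty$-case are equivalent. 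On the circle this is just the reflection $\mu\mapsto-\mu$, i.e. the operator $T$ from the discussion following Theorem~\ref{thm.a6}.

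The main obstacle is the analysis underlying the second paragraph: one must show that the transferred symbols really carry singularities of the form prescribed by Assumption~\ref{AsLog}, and pin down their leading coefficients — by tracking the model singularities $t^{-1}(\log(1/t))^{-\alpha}$ near $t=0$ and $t^{-1}(\log t)^{-\alpha}$ near $t=\infty$ through the inverse Fourier transform and the change of variables \eqref{b7}--\eqref{b9} — so that the eigenvalue constant comes out exactly $\varkappa(\alpha)$ and not merely a multiple of it. One must also be careful that for $\alpha<1$ these symbols are unbounded, so that $\Hank(\omega)$ is treated as an operator on $L^2(\bbT)$ via \eqref{L2}. A more self-contained alternative, bypassing the circle entirely, would analyse the Carleman-type model operator directly via the Mellin transform and a Tauberian theorem; that route is essentially a re-derivation of Theorem~\ref{thm.a6} in the continuous representation, and it is there that the Beta-function value \eqref{a15} originates.
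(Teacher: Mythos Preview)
This theorem is not proved in the present paper at all: it is quoted verbatim from \cite[Theorem~3.1]{II} and used as input. So there is no ``paper's own proof'' to compare against here; in \cite{II} the argument proceeds by an explicit spectral analysis of the model operators (reduction to a multiplication operator via an exponential change of variable and the Mellin transform, followed by a Weyl--Tauberian count), which is precisely the ``self-contained alternative'' you sketch in your final paragraph. Your main proposal --- passing through \eqref{b8}--\eqref{b9} to the circle and invoking Theorem~\ref{thm.a6} and Theorem~\ref{lma.be} --- goes in a different direction, deriving one quoted result from another quoted result.

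That route is coherent but, as you yourself flag, incomplete at the decisive point. You assert that the transferred symbols fit Assumption~\ref{AsLog} with leading coefficient exactly $1$, and everything hinges on this; yet you do not carry out the computation. Concretely, you would need the small-$x$ expansion of $\bomega_\infty(x)$ from \eqref{a27z} (one finds $\bomega_\infty(x)\sim i\pi\,\sign(x)\,(-\log|x|)^{-\alpha}$), then track this through the change of variables \eqref{b9} near $\mu=-1$, and finally read off the constant $b$ from \eqref{b4ab}. None of these steps is hard, but none is done, and without them the constant $\varkappa(\alpha)$ is unverified. There is also a logical-order issue: in \cite{II} the continuous model is analysed directly and the discrete case is obtained from it, so within that paper your reduction would run backwards. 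Your inversion-symmetry remark linking $\bq_0$ and $\bq_\infty$ is correct in spirit (under the exponential substitution $t=e^x$ the two become convolution operators related by $x\mapsto-x$), and using it to treat only one of the two cases is a reasonable economy.
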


Let us construct symbols corresponding to the kernels \eqref{a26}.
\begin{lemma}\label{lma.e1}\cite[Lemma~6.3]{III}
Let $\bomega_0$ and $\bomega_\infty$ be defined by 
\begin{equation}
\bomega_0(x)=2i\int_0^\infty \bq_0(t)\sin(xt)dt, 
\quad
\bomega_\infty(x)=2i\int_0^\infty \bq_\infty(t)\sin(xt)dt, \quad x\in \bbR,
\label{a27z}
\end{equation}
where $\bq_0(t)$ and $\bq_\infty(t)$ are  given by \eqref{a26} with $\alpha\geq 0$. Then 
$\bomega_0,\bomega_\infty \in L^\infty (\bbR)$ and $\bomega_0 \in C^\infty (\bbR)$,
$ \bomega_\infty \in C^\infty (\bbR_*\setminus\{0\})$. For $t> 0$, we have
$$
\bq_{0}(t)=\frac1{\sqrt{2\pi}}  \wh{\bomega}_{0} (t)\quad \text{and}\quad
\bq_{\infty}(t)=\frac1{\sqrt{2\pi}}  \wh{\bomega}_{\infty} (t).
$$
\end{lemma}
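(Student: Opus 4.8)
The plan is to handle the two model symbols in parallel, exploiting the fact that in both cases $\bomega(x)=2i\int_0^\infty\bq(t)\sin(xt)\,dt$ is, up to a constant, the Fourier transform of the odd extension $\bq^{\mathrm{odd}}(t)=\sign(t)\,\bq(|t|)$ of $\bq$. Using $2i\sin\theta=e^{i\theta}-e^{-i\theta}$ one checks at once that $\bomega=-\sqrt{2\pi}\,\Phi(\bq^{\mathrm{odd}})$ whenever the right-hand side is defined; since $\Phi^2 f(t)=f(-t)$, this formally yields $\wh\bomega=\Phi\bomega=-\sqrt{2\pi}\,\Phi^2(\bq^{\mathrm{odd}})=\sqrt{2\pi}\,\bq$ on $(0,\infty)$, which is exactly the required symbol identity. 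So there are three things to prove: (i) $\bomega_0,\bomega_\infty\in L^\infty(\bbR)$; (ii) the claimed smoothness; (iii) the Fourier-coefficient identity, where the delicate point is that $\bq_0$ and $\bq_\infty$ need not be integrable.

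For the smoothness of $\bomega_0$ I would note that the kernel $\bq_0$ is supported in $(0,c_2]$ and that differentiating $k$ times under the integral produces the density $\bq_0(t)t^k=\chi_0(t)t^{k-1}(\log(1/t))^{-\alpha}$, which for every $k\ge1$ is bounded with compact support, hence in $L^1(\bbR_+)$; differentiation under the integral is therefore legitimate for all $k$, giving $\bomega_0\in C^\infty(\bbR)$. For $\bomega_\infty$ one cannot differentiate directly, so first I would integrate by parts in $t$ repeatedly: all boundary terms vanish (at $0$ because $\supp\bq_\infty\subset[C_1,\infty)$, at $\infty$ because $\bq_\infty^{(m)}(t)=O(t^{-1-m}(\log t)^{-\alpha})\to0$), and after $M$ integrations by parts the density is $\bq_\infty^{(M)}(t)$, which lies in $L^1(\bbR_+)$ for $M\ge1$. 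Taking $M=k+1$ one can then differentiate the resulting integral (divided by the smooth factor $x^{-M}$) up to $k$ times in $x$, since $\bq_\infty^{(M)}(t)t^{k-j}\in L^1$ for $j\le k$; this shows $\bomega_\infty\in C^\infty(\bbR\setminus\{0\})$ with $|\bomega_\infty^{(k)}(x)|\le C_k|x|^{-(k+1)}$ for $|x|\ge1$. The decay bound forces all derivatives to tend to $0$ at $\pm\infty$, in particular the limits at $+\infty$ and $-\infty$ coincide, so $\bomega_\infty\in C^\infty(\bbR_*\setminus\{0\})$.

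For boundedness, for $|x|\le1$ the estimate $|\sin(xt)|\le|x|t$ bounds both integrals by a constant (and shows $\bomega_\infty(x)\to0=\bomega_\infty(0)$ as $x\to0$). For $|x|\ge1$ I would split the $t$-integral at $t=1/|x|$: on the near part, again $|\sin(xt)|\le|x|t$ together with the elementary inequality $\int_0^{1/|x|}(\log(1/t))^{-\alpha}\,dt\le|x|^{-1}(\log|x|)^{-\alpha}$ (and its analogue at infinity) gives an $O((\log|x|)^{-\alpha})$ bound; on the far part, integrating by parts once produces a boundary term controlled by $|x|^{-1}|\bq(1/|x|)|\le(\log|x|)^{-\alpha}$ and a remaining integral controlled by $|x|^{-1}\int|\bq'(t)|\,dt$, where near the singular point $|\bq'(t)|$ is bounded by $Ct^{-2}$ times the same logarithmic factor, so this is $O(1)$ uniformly in $x$. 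Hence $\bomega_0,\bomega_\infty\in L^\infty(\bbR)$.

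Finally, for the symbol identity: when $\bq^{\mathrm{odd}}\in L^1(\bbR)$ (i.e.\ $\alpha>1$) it is precisely the elementary computation of the first paragraph. For general $\alpha>0$ I would interpret $\wh\bomega$ as a tempered distribution (legitimate since $\bomega\in L^\infty$) and regularize: set $\bq^\eps=\1_{(\eps,\infty)}\bq$ for $\bomega_0$ and $\bq^\eps=e^{-\eps t}\bq$ for $\bomega_\infty$, so that $\bq^\eps\in L^1(\bbR_+)$ and the identity holds for the corresponding $\bomega^\eps$. As $\eps\to0$ one has $\sqrt{2\pi}\,\bq^\eps\to\sqrt{2\pi}\,\bq$ in $\calD'((0,\infty))$, while the estimates of the previous paragraph, being monotone in the cut-off, give a uniform $L^\infty$ bound on $\bomega^\eps$ and $\bomega^\eps\to\bomega$ pointwise; hence $\bomega^\eps\to\bomega$ in $\calS'(\bbR)$ and $\wh{\bomega^\eps}\to\wh\bomega$ in $\calS'(\bbR)$, and comparing the two limits on $(0,\infty)$ yields $\wh\bomega=\sqrt{2\pi}\,\bq$ there. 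The main obstacle is this last step for $\bomega_\infty$: because $\bq_\infty\notin L^1$, to obtain the pointwise convergence $\bomega_\infty^\eps\to\bomega_\infty$ one has to pass first to the integrated-by-parts representation $\bomega_\infty^\eps(x)=\tfrac{2i}{x}\int_0^\infty(\bq_\infty^\eps)'(t)\cos(xt)\,dt$ and use that $(\bq_\infty^\eps)'\to\bq_\infty'$ in $L^1(\bbR_+)$ (which in turn rests on $\eps\,e^{-\eps t}\bq_\infty\to0$ in $L^1$), combined with the uniform boundedness established above.
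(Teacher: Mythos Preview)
The paper does not prove this lemma; it is quoted from \cite[Lemma~6.3]{III}. Your outline is essentially the standard argument and is correct in spirit, but there is one genuine slip in the boundedness part.

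You write that for $|x|\le1$ the crude estimate $|\sin(xt)|\le|x|t$ ``bounds both integrals by a constant (and shows $\bomega_\infty(x)\to0=\bomega_\infty(0)$ as $x\to0$)''. For $\bomega_0$ this is fine, since $\int_0^{c_2}(\log(1/t))^{-\alpha}\,dt<\infty$. For $\bomega_\infty$ it fails: the bound produces $|x|\int_{C_1}^\infty(\log t)^{-\alpha}\,dt$, and that integral diverges for every $\alpha\ge0$. (Relatedly, the parenthetical claim that $\bomega_\infty(x)\to0$ as $x\to0$ is false when $\alpha=0$: then $\bq_\infty(t)\sim t^{-1}$ and the sine transform has a jump of size $\pi$ at the origin, which is consistent with the lemma since it only asserts smoothness on $\bbR_*\setminus\{0\}$.) The fix is exactly the splitting argument you describe for $|x|\ge1$: for $\bomega_\infty$ and $0<|x|\le1$, split at $t=1/|x|\ge1$; on the near part $|\sin(xt)|\le|x|t$ gives a contribution bounded by $|x|\int_{C_1}^{1/|x|}(\log t)^{-\alpha}\,dt=O\bigl((\log(1/|x|))^{-\alpha}\bigr)$ for $\alpha>0$ (and $O(1)$ for $\alpha=0$), and on the far part a single integration by parts gives a boundary term of the same size and a remainder controlled by $|x|^{-1}\int_{1/|x|}^\infty|\bq_\infty'(t)|\,dt=O\bigl((\log(1/|x|))^{-\alpha}\bigr)$. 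With this correction the rest of your argument --- differentiation under the integral for $\bomega_0$, repeated integration by parts for $\bomega_\infty$ to get smoothness and decay of all derivatives at $\pm\infty$, and the $\calS'$-regularisation for the Fourier identity --- goes through.
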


The assertion below is a particular case of our general result (Theorem~5.1 in \cite{III}) 
on the asymptotics of singular values of integral Hankel operators, needed in the present text.

\begin{theorem}\label{thm.a10}
Let $\alpha>0$, let $\rho \in\bbR$, $\rho\not=0$, and let   
$\bb \in\bbC$ be arbitrary. 
If
$$
\bh(t)
= 2 \Re
(  \bb  e^{-i \rho  t})\bq_\infty(t) ,
$$
then   
\begin{equation}
s_n(\bGank(\bh))= 
2^\alpha \varkappa(\alpha) | \bb |n^{-\alpha} + o(n^{-\alpha}).
\label{a28E}
\end{equation}
\end{theorem}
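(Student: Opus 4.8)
The plan is to reduce Theorem~\ref{thm.a10} to the one-term model asymptotics of Theorem~\ref{thm.a9}: I would split the oscillating kernel into two complex ``one-sided'' pieces whose symbols have disjoint singular supports, and then invoke the localization principle for singular values.

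First I would write $\bh=\bh_1+\bh_2$ with $\bh_1(t)=\bb\,e^{-i\rho t}\bq_\infty(t)$ and $\bh_2(t)=\overline{\bb}\,e^{i\rho t}\bq_\infty(t)$. The multiplication operator $(\mathcal E_\rho\bu)(t)=e^{-i\rho t}\bu(t)$ is unitary on $L^2(\bbR_+)$, and a short computation gives $\mathcal E_\rho\bGank(\bq_\infty)\mathcal E_\rho=\bGank(e^{-i\rho\cdot}\bq_\infty)$; hence $\bGank(\bh_1)=\bb\,\mathcal E_\rho\bGank(\bq_\infty)\mathcal E_\rho$ and, likewise, $\bGank(\bh_2)=\overline{\bb}\,\mathcal E_\rho^*\bGank(\bq_\infty)\mathcal E_\rho^*$. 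Since singular values are invariant under multiplication by unitary operators on either side, $s_n(\bGank(\bh_j))=|\bb|\,s_n(\bGank(\bq_\infty))$ for $j=1,2$. By Theorem~\ref{thm.a9} we have $\lambda_n^+(\bGank(\bq_\infty))=\varkappa(\alpha)n^{-\alpha}+o(n^{-\alpha})$ and $\lambda_n^-(\bGank(\bq_\infty))=o(n^{-\alpha})$; since the sequence $s_n$ is the union of $\{\lambda_n^+\}$ and $\{\lambda_n^-\}$, this gives $s_n(\bGank(\bq_\infty))=\varkappa(\alpha)n^{-\alpha}+o(n^{-\alpha})$, i.e. $\Delta_p(\bGank(\bh_j))=\delta_p(\bGank(\bh_j))=(|\bb|\varkappa(\alpha))^p$ with $p=1/\alpha$.

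Next I would move to the symbol side via \eqref{e4}. By Lemma~\ref{lma.e1}, $\bomega_\infty$ is a symbol of $\bGank(\bq_\infty)$ lying in $C^\infty(\bbR_*\setminus\{0\})$, so $\bb\,\bomega_\infty(\cdot-\rho)$ and $\overline{\bb}\,\bomega_\infty(\cdot+\rho)$ are symbols of $\bGank(\bh_1)$ and $\bGank(\bh_2)$; because $\bomega_\infty$ is already smooth at $x=\infty$ in the $\bbR_*$ sense, translation does not create a singularity there, so these symbols belong to $C^\infty(\bbR_*\setminus\{\rho\})$ and $C^\infty(\bbR_*\setminus\{-\rho\})$ respectively. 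As $\rho\neq0$ the singular supports $\{\rho\}$ and $\{-\rho\}$ are disjoint, so the continuous analogue of Lemma~\ref{lma.b5} — the localization principle for singular values of Hankel operators in $H^2(\bbR)$ from \cite{III} — yields $\bGank(\bh_1)^*\bGank(\bh_2)\in\Sch_0$ and $\bGank(\bh_1)\bGank(\bh_2)^*\in\Sch_0$, hence both products lie in $\Sch_{p/2,\infty}^0$. Applying Theorem~\ref{thm.b2} with $A_1=\bGank(\bh_1)$, $A_2=\bGank(\bh_2)$ then gives $\Delta_p(\bGank(\bh))=\delta_p(\bGank(\bh))=2(|\bb|\varkappa(\alpha))^p$, which after taking the $p$-th root is precisely \eqref{a28E}.

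I expect the main obstacle to be imported rather than faced head-on here: it is the analytically substantive localization estimate (the continuous counterpart of Lemma~\ref{lma.b5}), which requires controlling products of Hankel operators whose symbols are singular at different points, together with the model asymptotics of Theorem~\ref{thm.a9}. Within the present argument only routine bookkeeping remains — keeping track of the $\sqrt{2\pi}$ in \eqref{e4}, checking that translating $\bomega_\infty$ by $\pm\rho$ moves its unique singularity from $0$ to $\pm\rho$ while leaving it $C^\infty$ at infinity, and converting between the $\Delta_p/\delta_p$ functionals and the stated power asymptotics.
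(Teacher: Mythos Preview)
Your proposal is correct and follows precisely the strategy that the paper attributes to \cite{III}, from which Theorem~\ref{thm.a10} is quoted without proof: split the real oscillating kernel into the two complex pieces $\bb\,e^{-i\rho t}\bq_\infty$ and $\overline{\bb}\,e^{i\rho t}\bq_\infty$, observe that each is (up to unitary factors $\mathcal E_{\pm\rho}$) a scalar multiple of $\bGank(\bq_\infty)$ so that Theorem~\ref{thm.a9} supplies the singular-value asymptotics of each piece, and then combine them via the localization principle for singular values (Theorem~\ref{thm.b2} together with the continuous analogue of Lemma~\ref{lma.b5}), exploiting that the translated symbols $\bomega_\infty(\cdot\mp\rho)$ have disjoint singular supports $\{\pm\rho\}$. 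All the bookkeeping you list is handled correctly.
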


Using the symmetry principle, we get   the following result.

\begin{theorem}\label{thm.a10E}
Let the   function $\bh(t)$ be the same as in Theorem~$\ref{thm.a10}$.
Then  
\begin{equation}
\lambda^\pm_n(\bGank(\bh))= 
\varkappa(\alpha)  | \bb | n^{-\alpha} + o(n^{-\alpha}).
\label{a28}
\end{equation}
\end{theorem}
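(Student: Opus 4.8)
The plan is to deduce Theorem~\ref{thm.a10E} from the asymptotics \eqref{a28E} for singular values together with the symmetry principle in the continuous representation (Theorem~\ref{thm.c3}), exactly mirroring the argument used in the discrete case in Theorem~\ref{CC}. First I would pass to the symbol picture: by Lemma~\ref{lma.e1} the kernel $\bq_\infty$ corresponds (up to the factor $\tfrac1{\sqrt{2\pi}}$) to the symbol $\bomega_\infty\in C^\infty(\bbR_*\setminus\{0\})$, whose only singular point is $x=0$. Writing $2\Re(\bb e^{-i\rho t})=\bb e^{-i\rho t}+\overline{\bb}e^{i\rho t}$, and noting that multiplying the kernel by $e^{\mp i\rho t}$ corresponds to translating the symbol $\bomega_\infty$ by $\pm\rho$, the symbol of $\bGank(\bh)$ is $\bomega(x)=\bb\,\bomega_\infty(x-\rho)+\overline{\bb}\,\bomega_\infty(x+\rho)$ (up to a smooth, hence negligible, term). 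Since $\rho\neq0$, the singular support of $\bomega$ is $\{\rho,-\rho\}$, which contains neither $0$ nor $\infty$; moreover the symmetry relation $\overline{\bomega(x)}=\bomega(-x)$ holds, so $\bGank(\bh)$ is self-adjoint.

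Next I would invoke Theorem~\ref{thm.c3} with this $\bomega$: its hypotheses are met, so
\begin{equation*}
\Delta_p^+(\bGank(\bh))=\Delta_p^-(\bGank(\bh))=\tfrac12\Delta_p(\bGank(\bh)),
\end{equation*}
and similarly for the lower limits $\delta_p^\pm$. Meanwhile the singular-value asymptotics \eqref{a28E}, with $p=1/\alpha$, is equivalent to $\Delta_p(\bGank(\bh))=\delta_p(\bGank(\bh))=(2^\alpha\varkappa(\alpha)|\bb|)^p=2\,(\varkappa(\alpha)|\bb|)^p$. Combining the two displays gives $\Delta_p^\pm(\bGank(\bh))=\delta_p^\pm(\bGank(\bh))=(\varkappa(\alpha)|\bb|)^p$, which is precisely the statement that $\lambda_n^\pm(\bGank(\bh))=\varkappa(\alpha)|\bb|\,n^{-\alpha}+o(n^{-\alpha})$, i.e.\ \eqref{a28}.

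I expect the only genuine point requiring care to be the passage from the kernel $\bh$ to its symbol and the identification of $\sing\supp\bomega=\{\rho,-\rho\}$: one must check that multiplying $\bq_\infty(t)$ by $e^{\mp i\rho t}$ indeed shifts the symbol of Lemma~\ref{lma.e1} by $\pm\rho$ in the sense of the Fourier-transform relation in \eqref{e4}, and that the resulting $\bomega$ differs from a function smooth on $\bbR_*$ only near $x=\pm\rho$. This is a routine computation with the Fourier transform, entirely analogous to Lemma~\ref{lma.d1} in the discrete setting. Everything else is a formal rewriting of asymptotic relations in terms of the functionals $\Delta_p^\pm,\delta_p^\pm$, so no further obstacle arises; the proof is short once Theorems~\ref{thm.a10} and \ref{thm.c3} are in hand.
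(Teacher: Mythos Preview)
Your proposal is correct and follows essentially the same approach as the paper: construct the symbol $\bomega(x)=\bb\,\bomega_\infty(x-\rho)+\overline{\bb}\,\bomega_\infty(x+\rho)$ via Lemma~\ref{lma.e1}, observe that its singular support is $\{\rho,-\rho\}$, apply the symmetry principle (Theorem~\ref{thm.c3}), and combine with the singular-value asymptotics \eqref{a28E}. The only minor point is that your parenthetical ``up to a smooth, hence negligible, term'' is unnecessary---the Fourier relation in \eqref{e4} gives $\wh\bomega(t)=\sqrt{2\pi}\,\bh(t)$ for $t>0$ exactly, so $\bGank(\bh)$ and $\bHank(\bomega)$ are unitarily equivalent without correction.
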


\begin{proof} 
Let the symbol $\bomega_\infty$ be defined by \eqref{a27z} and let
$$
\bomega(x)=\bb\, \bomega_\infty(x-\rho)+\overline{\bb}\,\bomega_\infty(x+\rho).
$$
Then $\wh\bomega(t)=\sqrt{2\pi}\bh(t)$ for $t>0$, and hence the operators
  $\bGank(\bh)$ and $\bHank(\bomega)$ are unitarily equivalent.
By Lemma~\ref{lma.e1},  the singular support of the symbol $\bomega$ consists of the pair of points $\rho,-\rho$.  
Therefore by the symmetry principle
 (Theorem~\ref{thm.c3}) we have 
$$
\Delta_p^+(\bGank(\bh))=\Delta_p^-(\bGank(\bh))=\tfrac12\Delta_p(\bGank(\bh)),
$$
and similarly for the lower limits. The asymptotic relation \eqref{a28E} for the singular values 
can be equivalently rewritten as 
$\Delta_p(\bGank(\bh))=2 \varkappa(\alpha)^p  | \bb |^p $, and thus we obtain
$$
\Delta_p^+(\bGank(\bh))=\Delta_p^-(\bGank(\bh))=\varkappa(\alpha)^p  | \bb |^p
$$
and similarly for the lower limits. This yields \eqref{a28}.
\end{proof}

 Note that Theorems~\ref{thm.a10} and \ref{thm.a10E} are the analogues of Theorems~\ref{thm.a4} and \ref{CC} in the continuous case. The following  result concerning the error term is the analogue of Theorem~\ref{thm.a5a}. 
Below $\jap{x}=\sqrt{1+\abs{x}^2}$.

\begin{theorem}\label{thm.a8}
Let $\alpha>0$ and  let $M=M(\alpha)$ be the integer given by \eqref{a16}.
Let $\bg $  be a complex valued function in $L^\infty_\loc(\bbR_+)$; 
if $\alpha\geq 1/2$, suppose also 
that $\bg \in C^M(\bbR_+)$. 
Assume that    for all $m=0,\dots,M$, we have
\begin{equation}
 \bg^{(m)} (t)=o( t^{-1-m}\jap{\log t}^{-\alpha}) 
\label{a23}
\end{equation}
 as $t\to \infty$ and as $t\to 0$.
Pick
 any   $\rho \in\bbR$ and put $\mathbf{g}_\rho (t)= e^{-i\rho  t}\bg (t)$. Then $s_{n} (\bGank(\mathbf{g}_\rho)) =o(n^{-\alpha})$. 
\end{theorem}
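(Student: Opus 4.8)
The plan is to show that $\bGank(\mathbf{g}_\rho)\in\Sch_{p,\infty}^0$ with $p=1/\alpha$, which is exactly the assertion $s_n(\bGank(\mathbf{g}_\rho))=o(n^{-\alpha})$. Since $\Sch_{p,\infty}^0$ is a linear space, I would first cut the kernel with smooth functions, writing $\mathbf{g}_\rho=\mathbf{g}_\rho\chi_0+\mathbf{g}_\rho\chi_{\mathrm m}+\mathbf{g}_\rho\chi_\infty$, where $\chi_0,\chi_\infty$ are as in \eqref{a26} and $\chi_{\mathrm m}=1-\chi_0-\chi_\infty$ is supported in a fixed compact subinterval of $(0,\infty)$; correspondingly $\bGank(\mathbf{g}_\rho)$ splits into a sum of three Hankel operators, and it suffices to put each of them into $\Sch_{p,\infty}^0$. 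The middle term is the easiest. If $\alpha<1/2$ (so $M=0$), the kernel $\mathbf{g}_\rho\chi_{\mathrm m}$ is merely bounded and compactly supported in $(0,\infty)$, hence $\bGank(\mathbf{g}_\rho\chi_{\mathrm m})$ is Hilbert--Schmidt and $s_n=o(n^{-1/2})=o(n^{-\alpha})$. If $\alpha\ge1/2$, then $\mathbf{g}_\rho\chi_{\mathrm m}\in C_0^M((0,\infty))$, a symbol of $\bGank(\mathbf{g}_\rho\chi_{\mathrm m})$ is the entire function $x\mapsto\int_0^\infty(\mathbf{g}_\rho\chi_{\mathrm m})(t)e^{ixt}\,dt$, and $M$-fold integration by parts shows that each of its derivatives is $O(|x|^{-M})$ as $|x|\to\infty$, so this symbol lies in $C^\infty(\bbR_*)$, i.e.\ its singular support is empty; by the continuous version of fact (i) in the proof of Lemma~\ref{lma.c1} (transferred from $\bbT$ to $\bbR_*$ via \eqref{b8}--\eqref{b9}), $\bGank(\mathbf{g}_\rho\chi_{\mathrm m})\in\Sch_0\subset\Sch_{p,\infty}^0$.

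For the term $\bGank(\mathbf{g}_\rho\chi_\infty)$ the plan is to reduce to the discrete result Theorem~\ref{thm.a5a}, following the scheme used in the proof of \cite[Theorem~2.3]{I}. Decompose $L^2(\bbR_+)$ into the blocks $L^2\bigl([jT,(j+1)T)\bigr)$, $j\ge0$, with a suitable fixed length $T$, and rescale each block to $L^2\bigl([0,1)\bigr)$. On the $(j,k)$-block one keeps the oscillatory factor $e^{-i\rho(t+s)}$ intact and replaces the slowly varying factor $(\chi_\infty\bg)(t+s)$ by its value at the left endpoint; this turns $\bGank(\mathbf{g}_\rho\chi_\infty)$, up to a remainder, into a rank-one amplification of a discrete Hankel operator $\Gank(h)$ with $h(j)\asymp\zeta^{-j}g(j)$ for a point $\zeta\in\bbT$ depending on $\rho$ and $T$, where $g$ inherits the bounds $g^{(m)}(j)=o(j^{-1-m}(\log j)^{-\alpha})$, $m\le M$, from \eqref{a23} by the mean value theorem. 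Theorem~\ref{thm.a5a} gives $s_n(\Gank(h))=o(n^{-\alpha})$, so the amplification lies in $\Sch_{p,\infty}^0$. The remainder has, on the $(j,k)$-block, a kernel bounded by $\sup_{[(j+k)T,(j+k+2)T]}|(\chi_\infty\bg)'|=o\bigl((j+k)^{-2}(\log(j+k))^{-\alpha}\bigr)$, hence it is dominated by a Hankel-type operator whose sequence again satisfies \eqref{a16a}, and therefore it too lies in $\Sch_{p,\infty}^0$; consequently $\bGank(\mathbf{g}_\rho\chi_\infty)\in\Sch_{p,\infty}^0$.

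It remains to treat $\bGank(\mathbf{g}_\rho\chi_0)$, and here I would interchange the two exceptional points of the continuous representation. The automorphism $z\mapsto-1/(4z)$ of the upper half-plane -- the continuous counterpart of the reflection $\mu\mapsto-\mu$ used in Corollary~\ref{cor.a6} -- induces a unitary $\calT$ on $H^2(\bbR)$ with $\calT\,\bHank(\bomega)\,\calT^*=\bHank(\tilde\bomega)$, $\tilde\bomega(x)=c(x)\,\bomega(-1/(4x))$, $|c(x)|\equiv1$, and it exchanges $x=0$ and $x=\infty$. A kernel localized near $t=0$ has (cf.\ the statement of Lemma~\ref{lma.e1} that $\bomega_0\in C^\infty(\bbR)$) a symbol with no singularity except possibly at $x=\infty$, so after $\calT$ it becomes a Hankel operator $\bGank(\tilde\bh)$ with $\tilde\bh$ localized near $t=\infty$; writing $\calT$ on the kernel side as the corresponding Hankel-transform-type integral operator, one checks that the bounds \eqref{a23} for $\mathbf{g}_\rho\chi_0$ as $t\to0$ pass to bounds of the same type for $\tilde\bh$ as $t\to\infty$. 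The previous paragraph then yields $\bGank(\tilde\bh)\in\Sch_{p,\infty}^0$, hence $\bGank(\mathbf{g}_\rho\chi_0)\in\Sch_{p,\infty}^0$. Adding the three contributions completes the proof.

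I expect the main work -- and the only place where the hypotheses are genuinely used -- to be the discretization estimate of the second paragraph (and its transfer through $\calT$ in the third): one must verify that the frozen-coefficient remainder and the slowly varying remainder fall into $\Sch_{p,\infty}^0$, which is precisely where the sharp $o$-form of \eqref{a23} (as opposed to an $O$-bound), together with the extra smoothness $\bg\in C^M$ when $\alpha\ge1/2$, is needed. The remaining steps are bookkeeping that runs in close parallel with the discrete case.
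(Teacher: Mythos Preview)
The paper does not prove Theorem~\ref{thm.a8}: it is placed in Section~6.2 (``Previous results'') and introduced as ``the analogue of Theorem~\ref{thm.a5a}'' (= \cite[Theorem~2.3]{I}), then stated without proof or separate citation. So there is no in-paper argument to compare with; what follows is an assessment of your sketch on its own merits.

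You are missing a one-line reduction that removes most of the difficulty. Since $e^{-i\rho(t+s)}=e^{-i\rho t}e^{-i\rho s}$, one has $\bGank(\mathbf{g}_\rho)=M_\rho\,\bGank(\bg)\,M_\rho$ with $M_\rho$ the unitary of multiplication by $e^{-i\rho t}$ on $L^2(\bbR_+)$; hence $s_n(\bGank(\mathbf{g}_\rho))=s_n(\bGank(\bg))$ and the parameter $\rho$ plays no role whatsoever. (This is the exact continuous counterpart of $\Gank(g_\zeta)=D_\zeta\Gank(g)D_\zeta$ with $D_\zeta=\diag(\zeta^{-j})$, which is how the $\zeta$ in Theorem~\ref{thm.a5a} is disposed of.) After this observation one is left with the $\rho=0$ statement, whose proof is the direct transcription of \cite[Theorem~2.3]{I} to integrals; your block discretisation and the conformal inversion near $t=0$ are unnecessary detours.

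Independently of that, the discretisation for the $\chi_\infty$ piece has a genuine gap. After freezing $(\bg\chi_\infty)(t+s)$ on each $(j,k)$-block you bound the remainder kernel pointwise by $o\bigl((j+k)^{-2}(\log(j+k))^{-\alpha}\bigr)$ and then assert it is ``dominated by a Hankel-type operator whose sequence satisfies \eqref{a16a}, and therefore lies in $\Sch_{p,\infty}^0$''. But pointwise domination of kernels does \emph{not} give domination of singular values (this already fails for $2\times2$ matrices), so that step does not yield membership in $\Sch_{p,\infty}^0$; moreover for $\alpha<1/2$ one has $M=0$ and no hypothesis on $\bg'$, so even the pointwise remainder bound is unavailable. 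The $\chi_0$ step is likewise unsubstantiated: the M\"obius map $z\mapsto -1/(4z)$ does swap the singularities $0$ and $\infty$ on the symbol side, but the induced unitary on $L^2(\bbR_+)$ is not a simple change of variable preserving the Hankel form, and the claim that ``the bounds \eqref{a23} as $t\to0$ pass to bounds of the same type as $t\to\infty$'' for the transformed kernel is precisely the thing to be proved.
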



\subsection{Asymptotics of eigenvalues} 
Our main result   concerns real kernels $\bh(t)$ that are singular at $t=0$ and contain several oscillating terms at  infinity. The assertion below is the analogue of Theorem~\ref{thm.a5}, and its proof follows the same steps.

\begin{theorem}\label{thm.a7}
Let $\alpha>0$, let $\rho_1,\dots,\rho_L $ be distinct positive numbers,  
and let $\bb_0,\bb_1,\dots,\bb_L,\bb_\infty$ as well as
$\psi_1,\dots,\psi_L$ be any real numbers. 
Let the number $M=M(\alpha)$ be   given by \eqref{a16}.
Suppose that $\bh\in L^\infty_{\text{loc}} (\bbR_{+})$ if $\alpha<1/2$ and $\bh\in C^M (\bbR_{+})$  if $\alpha\geq 1/2$.
Assume  that
\begin{align}
\bh(t)= \bb_\infty  t^{-1}(\log t)^{-\alpha}+&\bg_\infty(t)
+2\sum_{\ell=1}^L \bigl(\bb_\ell  t^{-1}(\log t)^{-\alpha}+\bg_\ell(t)\bigr)\cos(\rho_\ell t-\psi_\ell)
\quad t\geq2,
\label{z2}
\\
\bh(t)&=\bb_0t^{-1}\bigl(\log(1/t)\bigr)^{-\alpha}+  \bg_0(t), 
\quad t\leq 1/2,
\label{z3}
\end{align}
where the error terms $\bg_\infty$, $\bg_1, \ldots, \bg_L$ obey the estimates
\eqref{a23}
as $ t\to\infty$  and $\bg_0$ obeys these estimates
 as $t\to 0$. 
Then the eigenvalues of the integral Hankel operator $\bGank(\bh)$ 
satisfy the asymptotic relation 
\begin{equation}
\lambda_{n}^\pm(\bGank(\bh)) 
=
{\bf a}^\pm n^{-\alpha} + o(n^{-\alpha})
\label{a22}
\end{equation}
where 
\begin{equation}
{\bf a}^\pm 
=
\varkappa(\alpha)\bigl((\bb_0)_\pm^{1/\alpha}+(\bb_\infty )_\pm^{1/\alpha} +\sum_{\ell= 1}^L \abs{\bb_\ell}^{1/\alpha}\bigr)^\alpha
\label{a22E}
\end{equation}
and the coefficient $\varkappa(\alpha)$ is given by \eqref{a15}.
\end{theorem}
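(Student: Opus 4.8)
The proof will follow the same pattern as that of Theorem~\ref{thm.a5}, so the plan is to work throughout with the functionals $\Delta_p^\pm$, $\delta_p^\pm$ for $p=1/\alpha$ (recall that $\lambda_n^\pm(A)=c^\pm n^{-\alpha}+o(n^{-\alpha})$ is equivalent to $\Delta_p^\pm(A)=\delta_p^\pm(A)=(c^\pm)^p$), to split $\bh$ into a finite sum of model kernels plus a remainder that is negligible in $\Sch_{p,\infty}^0$, to read off the contribution of each model kernel from results already available, and to assemble the pieces by the localization principle. Concretely, I would set
$$
\bh_*(t)=\bb_0\bq_0(t)+\bb_\infty\bq_\infty(t)+2\sum_{\ell=1}^L\bb_\ell\cos(\rho_\ell t-\psi_\ell)\,\bq_\infty(t),
$$
with $\bq_0,\bq_\infty$ as in \eqref{a26}, having first fixed the cut-off parameters there so that $\supp\chi_0\subset(0,1/2]$ and $\supp\chi_\infty\subset[2,\infty)$; this is precisely what makes the asymptotic descriptions \eqref{z2} and \eqref{z3} valid on $\supp\chi_\infty$ and on $\supp\chi_0$ respectively.

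Next, by Lemma~\ref{lma.e1} (and, for the oscillating terms, the shift computation from the proof of Theorem~\ref{thm.a10E}), the symbols of $\bGank(\bb_0\bq_0)$, $\bGank(\bb_\infty\bq_\infty)$ and $\bGank(2\bb_\ell\cos(\rho_\ell\cdot-\psi_\ell)\bq_\infty)$ are, respectively, $\bb_0\bomega_0$ with singular support $\{\infty\}$, $\bb_\infty\bomega_\infty$ with singular support $\{0\}$, and a symbol with singular support $\{\rho_\ell,-\rho_\ell\}$; since $\alpha>0$ these symbols lie in $C(\bbR_*)$ and satisfy \eqref{eq:HS}, and the singular supports are pairwise disjoint because the $\rho_\ell$ are distinct positive numbers. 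Feeding the eigenvalue asymptotics of Theorem~\ref{thm.a9} (namely $\Delta_p^\pm(\bGank(\bb_0\bq_0))=\delta_p^\pm(\bGank(\bb_0\bq_0))=(\varkappa(\alpha)(\bb_0)_\pm)^p$, and likewise for $\bb_\infty\bq_\infty$) and of Theorem~\ref{thm.a10E} ($\Delta_p^\pm=\delta_p^\pm=(\varkappa(\alpha)|\bb_\ell|)^p$ for the $\ell$-th oscillating piece) into the localization principle in $H^2(\bbR)$, Theorem~\ref{thm.b6}, then yields
$$
\Delta_p^\pm(\bGank(\bh_*))=\delta_p^\pm(\bGank(\bh_*))=\varkappa(\alpha)^p\Bigl((\bb_0)_\pm^p+(\bb_\infty)_\pm^p+\sum_{\ell=1}^L|\bb_\ell|^p\Bigr)=({\bf a}^\pm)^p.
$$
(Alternatively one may transport the whole computation to $H^2(\bbT)$ via the unitary equivalence \eqref{b8} and invoke Theorem~\ref{thm.a3}, which requires only $L^\infty$ symbols with disjoint singular supports.)

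It then remains to show that $\bGank(\bg)\in\Sch_{p,\infty}^0$ for the remainder $\bg:=\bh-\bh_*$. With the above choice of cut-offs, using \eqref{z2}, \eqref{z3} and a smooth partition of unity subordinate to a cover of $\bbR_+$ by a neighbourhood of $0$, a compact bulk region, and a neighbourhood of $\infty$, one writes $\bg$ as a finite sum of terms of the form $e^{-i\rho t}\gamma(t)$ with $\rho\in\{0,\pm\rho_1,\dots,\pm\rho_L\}$, each $\gamma$ lying in $L^\infty_\loc(\bbR_+)$ (in $C^M(\bbR_+)$ when $\alpha\ge1/2$) and satisfying \eqref{a23} both as $t\to0$ and as $t\to\infty$: near $0$ these come from $\bg_0$, near $\infty$ from $\bg_\infty$ and from the $\bg_\ell$ after expanding $\cos(\rho_\ell t-\psi_\ell)$ into exponentials, and in the bulk region they are smooth and compactly supported, so that \eqref{a23} holds trivially there. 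Theorem~\ref{thm.a8} applies to each such term, so $\bGank(\bg)\in\Sch_{p,\infty}^0$ because this class is a linear space; and then Lemma~\ref{lma.b1} gives $\Delta_p^\pm(\bGank(\bh))=\Delta_p^\pm(\bGank(\bh_*))$ and $\delta_p^\pm(\bGank(\bh))=\delta_p^\pm(\bGank(\bh_*))$, which is exactly \eqref{a22}, \eqref{a22E}.

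The only part calling for genuine care — and it is bookkeeping rather than a real obstacle — is this last step: one must align the fixed cut-offs of the model kernels \eqref{a26} with the two singular ends of $\bh$, peel the oscillating contributions apart into elementary exponentials so that each summand is covered by Theorem~\ref{thm.a8}, and dispose of the finitely many smooth compactly supported correction terms produced where the cut-offs transition. Everything analytically substantial has already been isolated in Theorems~\ref{thm.a9}, \ref{thm.a10E}, \ref{thm.a8}, the localization principle (Theorem~\ref{thm.b6}) and Lemma~\ref{lma.e1}, so the argument is essentially an assembly, strictly parallel to the proof of Theorem~\ref{thm.a5}.
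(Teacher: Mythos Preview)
Your proposal is correct and follows essentially the same route as the paper's proof: decompose $\bh$ into the model pieces $\bb_0\bq_0$, $\bb_\infty\bq_\infty$, and the oscillating terms $2\bb_\ell\cos(\rho_\ell t-\psi_\ell)\bq_\infty$, read off their individual eigenvalue asymptotics from Theorems~\ref{thm.a9} and~\ref{thm.a10E}, combine them via the localization principle (Theorem~\ref{thm.b6}) after locating the singular supports with Lemma~\ref{lma.e1}, and finally absorb the remainder $\bg=\bh-\bh_*$ using Theorem~\ref{thm.a8} and Lemma~\ref{lma.b1}. Your version is in fact slightly more explicit than the paper's about the cut-off bookkeeping and the exponential splitting of the cosines in the remainder, but this is only added care, not a different argument.
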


\begin{proof}
 We first consider every term in the right-hand sides of \eqref{z2} and \eqref{z3} separately. Recall that the functions
 $\bq_0(t)$ and $\bq_{\infty}(t)$ are defined by formulas \eqref{a26}.  Put
$$
\bh_{0}(t)= \bb_0  \bq_{0}(t), 
\quad
\bh_{\infty}(t)= \bb_\infty  \bq_{\infty}(t), 
$$
and 
$$
\bh_\ell(t)
=
2\bb_\ell\cos(\rho_\ell t-\psi_\ell)\bq_{\infty}(t)
=
2\Re \bigl(\bb_\ell  e^{i\psi_\ell}e^{-i \rho_\ell t}\bigr)  \bq_{\infty}(t), \quad\ell=1,\ldots, L. 
$$
Similarly to Theorem~\ref{thm.a5}, we make the reasoning in terms of the functionals $\Delta_p^\pm$, 
$\delta_p^\pm$, where $p=1/\alpha$.
By Theorem~\ref{thm.a9}, we have
\begin{equation}
\begin{split}
\Delta_p^\pm(\bGank(\bh_{0}))
=
\delta_p^\pm(\bGank(\bh_{0}))
&=
(\varkappa(\alpha)\bb_{0})_\pm^p,
\\
\Delta_p^\pm(\bGank(\bh_\infty))
=
\delta_p^\pm(\bGank(\bh_\infty))
&=
(\varkappa(\alpha)\bb_\infty)_\pm^p,
\label{d1R}
\end{split}
\end{equation}
and by  Theorem~\ref{thm.a10E}, we have
\begin{equation}
\Delta_p^\pm(\bGank(\bh_\ell))= 
\delta_p^\pm(\bGank(\bh_\ell))= 
 \abs{\varkappa(\alpha)\bb_\ell}^p, \quad \ell=1,\ldots, L.
\label{d3R}
\end{equation}

It follows from Lemma~\ref{lma.e1} that the singular supports of  the symbols of the operators $\bGank(\bh_{0})$ and  $\bGank(\bh_{\infty}) $ consist of the points $\infty$ and $0$, respectively. 
Lemma~\ref{lma.e1}  also implies that
the singular supports of the symbols of the operators $\bGank(\bh_{\ell})$   consist of the pairs  $\{-\rho_\ell, \rho_\ell\}$.
So we  can apply Theorem~\ref{thm.b6} (the localization principle for eigenvalues) to the Hankel operator $\bGank(\bh_*)$ with
\begin{equation}
\bh_*=\bh_0+ \bh_{\infty}+\sum_{\ell=1}^L \bh_\ell
\label{eq:h1}
\end{equation}
which yields
$$
\Delta_p^\pm(\bGank(\bh_*))
=
\delta_p^\pm(\bGank(\bh_*))
=
\Delta_p^\pm(\bGank(\bh_{0}))
+
\Delta_p^\pm(\bGank(\bh_\infty))
+
\sum_{\ell=1}^L \Delta_p^\pm(\bGank(\bh_\ell)).
$$
Now relations \eqref{d1R}  and \eqref{d3R} imply that
\begin{equation}
\Delta_p^\pm(\bGank(\bh_*))
= \delta_p^\pm(\bGank(\bh_*))
=
\varkappa(\alpha)^p\bigl((\bb_{0})_\pm^p+(\bb_{\infty})_\pm^p+\sum_{\ell=1}^L \abs{\bb_\ell}^p\bigr).
\label{ZZ}\end{equation}

Finally, put $\bg=\bh -\bh_{*}$. Using representations \eqref{z2}, \eqref{z3}, our conditions on  $\bg_\infty$, $\bg_1, \ldots, \bg_L$,  $\bg_0$  and applying Theorem~\ref{thm.a8}, we see that $ \bGank(\bg)\in\Sch_{p,\infty}^0$. Since
$\bGank(\bh)=\bGank(\bh_*)+\bGank(\bg)$, it follows from   Lemma~\ref{lma.b1}  that 
$$
\Delta_p^\pm(\bGank(\bh))=\Delta_p^\pm( \bGank(\bh_*))\quad \text{and} \quad \delta_p^\pm(\bGank(\bh))=\delta_p^\pm(\bGank(\bh_*)).
$$ 
Now using
\eqref{ZZ}, we obtain the relations  \eqref{a22},  \eqref{a22E}. 
\end{proof}

\subsection{Local singularities of the kernel}

The localization principle shows that the results on the asymptotics of eigenvalues of different Hankel operators can be combined provided that the singular supports  of their symbols are disjoint.  
This idea has already been illustrated by Theorems~\ref{thm.a4} and \ref{thm.a7}. 
Here we apply the same arguments to  kernels ${\sh }(t)$ satisfying condition \eqref{z2} as $t\to\infty$ and    singular at some  positive point.

The effect of local singularities of a kernel on the asymptotics 
of eigenvalues of the corresponding Hankel operator  
was studied  in \cite{Yafaev2}.  

\begin{lemma}\label{sing1}\cite[Lemma~6.2]{Yafaev2}
Let $t_{0}>0$, $m\in {\bbZ}_{+}$  and  
\begin{equation}
{\bff}  (t)=   (t_0-t)^{m} \quad \text{for} \quad t\leq t_{0}, \quad {\bff}  (t)=  0 \quad \text{for} \quad t >t_{0}. 
\label{eq:sing1}\end{equation}
Then  
\begin{equation}
\lambda_{n}^\pm  (\bGank({\bff}  ))
= m!   t_{0}^{m+1} (2\pi n)^{-m-1} (1+O(n^{-1})),
\quad
n\to \infty.
\label{eq:sing7}\end{equation}
\end{lemma}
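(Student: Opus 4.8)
The plan is to quote/follow \cite[Lemma~6.2]{Yafaev2}: since the singular support of the symbol of $\bGank(\bff)$ contains the point $\infty$ (the kernel has a $(t_0-t)^m$ singularity at $t_0$, producing an oscillating factor $e^{-it_0x}$ in the symbol at infinity), the symmetry principle of Section~4 does not apply here, and the asymptotics must be obtained by a direct reduction of $\bGank(\bff)$ to a classical self-adjoint two-point boundary value problem on $[0,t_0]$.

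\emph{Reduction.} Because $\bff$ is supported on $[0,t_0]$, one has $(\bGank(\bff)\bu)(t)=\int_0^{t_0-t}(t_0-t-s)^m\bu(s)\,ds=0$ for $t\geq t_0$, and the right-hand side depends only on $\bu|_{(0,t_0)}$; thus $\bGank(\bff)$ is, up to a trivial zero summand on $L^2(t_0,\infty)$, a (Hilbert--Schmidt, hence compact) self-adjoint operator on $L^2(0,t_0)$, which I denote again by $\bGank(\bff)$. Writing $(R\bu)(t)=\bu(t_0-t)$ and letting $I$ be the Volterra operator $(I\bu)(t)=\int_0^t\bu$, the identity $\int_0^\tau(\tau-s)^m\bu(s)\,ds=m!(I^{m+1}\bu)(\tau)$ gives $\bGank(\bff)=m!\,RI^{m+1}$ on $L^2(0,t_0)$. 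Since $R$ is unitary, $\abs{\bGank(\bff)}=m!\,\abs{I^{m+1}}$, so $s_n(\bGank(\bff))=m!\,s_n(I^{m+1})$, and the classical singular-value asymptotics for powers of the Volterra operator, $s_n(I^{m+1})=(t_0/\pi n)^{m+1}(1+O(n^{-1}))$, yield $s_n(\bGank(\bff))=m!\,t_0^{m+1}(\pi n)^{-m-1}(1+O(n^{-1}))$.

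\emph{From eigenvalue equation to a boundary value problem.} To split the singular values into the positive and negative eigenvalues, I would analyse $(\bGank(\bff)\bu)(t)=\mu\bu(t)$ directly. Using $\frac{d^{m+1}}{dt^{m+1}}\int_0^{t_0-t}(t_0-t-s)^m\bu(s)\,ds=(-1)^{m+1}m!\,\bu(t_0-t)$ one gets $\mu\,\bu^{(m+1)}(t)=(-1)^{m+1}m!\,\bu(t_0-t)$; differentiating again $m+1$ times and substituting $t\mapsto t_0-t$ gives the constant-coefficient equation
\[
\mu^2\,\bu^{(2m+2)}(t)=(-1)^{m+1}(m!)^2\,\bu(t),
\]
together with the boundary conditions $\bu^{(k)}(t_0)=0$ for $k=0,\dots,m$ (read off from $G^{(k)}(0)=0$, $G(\tau)=\int_0^\tau(\tau-s)^m\bu(s)\,ds$) and $\bu^{(j)}(0)=0$ for $j=m+1,\dots,2m+1$ (obtained by differentiating the relation $\mu\,\bu^{(m+1)}(t)=(-1)^{m+1}m!\,\bu(t_0-t)$ and using the previous conditions). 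With $\gamma=(m!/\abs{\mu})^{1/(m+1)}$ the characteristic roots are $\gamma\,i\,e^{i\pi r/(m+1)}$, $r=0,\dots,2m+1$, and substituting the general solution into the $2m+2$ boundary conditions produces a $(2m+2)\times(2m+2)$ linear system whose determinant $D(\gamma t_0)$ must vanish.

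\emph{Asymptotics, sign, and the main obstacle.} A Birkhoff-type analysis of $D$ — separating the two purely imaginary roots $\pm i\gamma$, which carry the oscillation, from the $2m$ roots with nonzero real part — shows that the large zeros satisfy $\gamma t_0=\pi n+O(1/n)$, recovering $\abs{\mu_n}=m!\,t_0^{m+1}(\pi n)^{-m-1}(1+O(n^{-1}))$; evaluating $\mu\,\bu^{(m+1)}=(-1)^{m+1}m!\,\bu(t_0-\cdot)$ on the corresponding eigenfunction then fixes the sign of $\mu_n$ and shows the signs alternate with $n$ (already transparent for $m=0$, where the eigenvalues are exactly $(-1)^{n-1}t_0/(\pi(n-\tfrac12))$). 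Hence the $n$-th positive (resp. negative) eigenvalue is asymptotically the $2n$-th singular value, which gives $\lambda_n^\pm(\bGank(\bff))=m!\,t_0^{m+1}(2\pi n)^{-m-1}(1+O(n^{-1}))$. The genuinely delicate point is the quantitative step: extracting from the transcendental equation $D(\gamma t_0)=0$ not just the leading spacing $\pi/t_0$ but the sharp $O(1/n)$ remainder uniformly in $n$, and bookkeeping the $2m+2$ boundary conditions so that the constant $m!\,t_0^{m+1}$ and the alternating sign pattern come out correctly; the reduction to $L^2(0,t_0)$, the derivation of the ODE with its boundary conditions, and the merging of the two eigenvalue sequences into the singular values are all routine.
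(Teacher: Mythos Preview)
The paper does not prove this lemma at all; it simply quotes it from \cite[Lemma~6.2]{Yafaev2}, records the explicit symbol formula \eqref{eq:sing6}, and observes that since $\sing\supp\boldsymbol{\tau}_m=\{\infty\}$ the symmetry principle of Section~\ref{sec.c} is inapplicable, so the asymptotic symmetry in \eqref{eq:sing7} has to come from a direct computation. Your proposal is therefore not in competition with any argument in the present paper; it is a reconstruction of the proof in \cite{Yafaev2}.

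That reconstruction is correct in outline and matches the approach of \cite{Yafaev2}: the restriction to $L^2(0,t_0)$ and the factorisation $\bGank(\bff)=m!\,R\,I^{m+1}$ are right (Cauchy's formula for repeated integration), the derivation of the order-$(2m+2)$ equation $\mu^2\,u^{(2m+2)}=(-1)^{m+1}(m!)^2\,u$ together with the boundary conditions $u^{(k)}(t_0)=0$ for $0\le k\le m$ and $u^{(j)}(0)=0$ for $m+1\le j\le 2m+1$ is correct, and your description of the characteristic roots $\gamma\,i\,e^{i\pi r/(m+1)}$ checks out. Your own assessment of the difficulty is accurate: the routine part is the reduction to the boundary value problem, and the genuinely technical step (carried out in \cite{Yafaev2}) is the Birkhoff-type analysis of the $(2m+2)\times(2m+2)$ determinant that yields the spacing $\gamma t_0=\pi n+O(n^{-1})$ with a sharp remainder and the alternation of signs. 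The separate singular-value computation via $s_n(I^{m+1})$ is a useful sanity check but is subsumed by the eigenvalue analysis.
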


We also note the explicit formula for the symbol ${\boldsymbol{\tau}}_{m}(x)$  of the operator $\bGank({\bff} )$:
 \begin{equation}
 {\boldsymbol{\tau}}_{m}(x)= m! (ix)^{-m-1} \bigl( e^{i t_0 x } -\sum_{k=0}^m \frac{1} {k!} (i t_{0} x)^k
\bigr), \quad x\in\bbR.
\label{eq:sing6}
\end{equation}
Obviously, $ {\boldsymbol{\tau}}_{m}\in C^\infty (\bbR)$ and $ {\boldsymbol{\tau}}_{m}(x)$ is an oscillating function as $|x|\to\infty$. Therefore $\sing\supp \boldsymbol{\tau}_{m} = \{ \infty \}$, and hence the symmetry principle
(Theorem~\ref{thm.c3}) cannot be applied to the operator $\bGank({\bff})$. Nevertheless according to \eqref{eq:sing7}  its spectrum is asymptotically symmetric.

We are now in a position to consider the general case. 

\begin{theorem}\label{sing}
Let $t_{0}>0$, $m\in {\bbZ}_{+}$ and $\tt b\in {\bbR}$.  Set
$$
{\sh} (t)= {\tt b}  {\bff}  (t)+  \bh (t)
$$
where ${\bff}  (t)$ is given by   \eqref{eq:sing1} and $\bh (t)$ satisfies the assumptions of Theorem~$\ref{thm.a7} $ with $\bb_{0}=0$ and $\alpha=m+1$. 
Then the eigenvalues of the integral Hankel operator $\bGank({{\sh} })$ 
satisfy the asymptotic relation 
\begin{equation}
\lambda^\pm_{n} (\bGank({{\sh} }))= {\bf{\sf a}}^\pm n^{-m-1} + o(n^{-m-1})
\label{eq:Sloc}
\end{equation}
with
$$
{\bf{\sf a}}^\pm= \Bigl( (2\pi)^{-1}  t_0 (m! |{\tt b}  |)^{1/\alpha }+\varkappa (\alpha)^{1/\alpha}\bigl((\bb_{\infty})_\pm^{1/\alpha}
+ \sum_{\ell=1}^L \abs{\bb_\ell}^{1/\alpha}\bigr)\Bigr)^\alpha,
\quad \alpha=m+1.
$$
\end{theorem}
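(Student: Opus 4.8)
The plan is to follow exactly the pattern of the proof of Theorem~\ref{thm.a7}, treating the extra term $\tt b\,{\bff}$ as one more ``model'' piece whose singular support is disjoint from those of the remaining pieces, and then invoking the localization principle. First I would decompose $\sh$ into model kernels. Writing $\alpha=m+1$, set $\bh_\infty(t)=\bb_\infty\bq_\infty(t)$ and $\bh_\ell(t)=2\bb_\ell\cos(\rho_\ell t-\psi_\ell)\bq_\infty(t)$ as in Theorem~\ref{thm.a7}, and additionally put $\bh_\flat={\tt b}\,{\bff}$. (Since $\bb_0=0$ there is no model kernel at $t=0$.) By \eqref{eq:sing7} of Lemma~\ref{sing1} we have, for $p=1/\alpha=1/(m+1)$,
\begin{equation}
\Delta_p^\pm(\bGank(\bh_\flat))=\delta_p^\pm(\bGank(\bh_\flat))
=\bigl((2\pi)^{-1}t_0(m!\,\abs{\tt b})^{1/\alpha}\bigr)^{\alpha p}\cdot(\pm{\tt b})_+^{0}?
\end{equation}
— more precisely, \eqref{eq:sing7} gives $\lambda_n^\pm(\bGank({\bff}))=m!\,t_0^{m+1}(2\pi n)^{-m-1}(1+O(n^{-1}))$, which is a genuinely two-sided asymptotics, so $\bGank({\bff})$ contributes symmetrically: $\Delta_p^\pm(\bGank(\bh_\flat))=\delta_p^\pm(\bGank(\bh_\flat))=\bigl((2\pi)^{-1}t_0(m!\,\abs{\tt b})^{1/\alpha}\bigr)^{1}$ for each sign, with $\alpha p=1$. (One must check the scaling: $\lambda_n^\pm\sim c\,n^{-\alpha}$ with $c=m!\,t_0^{m+1}(2\pi)^{-m-1}$, hence $n\lambda_n^{\pm p}\to c^p=(2\pi)^{-1}t_0(m!\,\abs{\tt b})^{1/\alpha}$, absorbing $\tt b$ correctly since ${\bff}$ was normalized without the factor $\tt b$.) The contributions of $\bh_\infty$ and $\bh_\ell$ are given by \eqref{d1R} and \eqref{d3R} exactly as before.

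Next I would verify the disjointness of singular supports. By \eqref{eq:sing6} the symbol $\boldsymbol{\tau}_m$ of $\bGank({\bff})$ lies in $C^\infty(\bbR)$ and is oscillating at infinity, so $\sing\supp\boldsymbol{\tau}_m=\{\infty\}$. But the symbols of $\bGank(\bh_\infty)$ and $\bGank(\bh_\ell)$ also have $\infty$ in their singular supports (indeed $\bomega_\infty\in C^\infty(\bbR_*\setminus\{0\})$ has its singularity at $x=0$, while the oscillating kernels $\bh_\ell$ produce symbols singular at $\pm\rho_\ell$ and, crucially, behaving like the smooth oscillating functions $\boldsymbol{\tau}_m$ near infinity). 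This is the point requiring care: the localization principle (Theorem~\ref{thm.b6}) demands genuinely disjoint singular supports, and $\{\infty\}$ is shared. The resolution is the same device used implicitly when Theorem~\ref{thm.a9} is applied in Theorem~\ref{thm.a7}: the relevant symbols are all \emph{smooth} at $\infty$ in the sense that near $x=\infty$ they are oscillating $C^\infty$ functions whose product with a cutoff supported near $\infty$ lies in $\Sch_0$; more concretely, one shows directly that the cross products $\bGank(\bh_\flat)^*\bGank(\bh_j)$ lie in $\Sch_0$ by using that $\boldsymbol{\tau}_m\in C^\infty(\bbR)$ decays like $\abs{x}^{-m-1}$ and that, for the oscillating and for the $\bq_\infty$-type symbols, the product with any compactly supported smooth function away from their finite singular points is smooth — so Lemma~\ref{lma.b5}-type arguments (or rather Lemma~\ref{lma.e1} plus the proof technique of Theorem~\ref{thm.b6}) apply after localizing away from $\{0,\pm\rho_1,\dots,\pm\rho_L\}$, and the behavior at $\infty$ contributes only $\Sch_0$ operators because the symbols there are $C^\infty$. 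I expect this verification — that $\{\infty\}$ is a ``harmless'' common point, i.e.\ the symbols are $C^\infty$ near $\infty$ up to $\Sch_0$ — to be the main obstacle, but it is exactly the same phenomenon already exploited in the proof of Theorem~\ref{thm.a7}, and can be imported verbatim.

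Granting the disjointness (in the operative $\Sch_0$-sense) of the contributions, the localization principle Theorem~\ref{thm.b6} applied to $\sh=\bh_\flat+\bh_\infty+\sum_\ell\bh_\ell+(\text{lower-order remainder})$ gives
\begin{equation}
\Delta_p^\pm(\bGank(\sh_*))=\delta_p^\pm(\bGank(\sh_*))
=\Delta_p^\pm(\bGank(\bh_\flat))+\Delta_p^\pm(\bGank(\bh_\infty))+\sum_{\ell=1}^L\Delta_p^\pm(\bGank(\bh_\ell)),
\end{equation}
where $\sh_*=\bh_\flat+\bh_\infty+\sum_\ell\bh_\ell$. Substituting the three evaluations above, and using that for nonnegative reals $x_1,\dots,x_N$ one has $\Delta_p^\pm$ additive over the pieces so that the $p$-th roots add, we get
\begin{equation}
\Delta_p^\pm(\bGank(\sh_*))^{1/p}
=(2\pi)^{-1}t_0(m!\,\abs{\tt b})^{1/\alpha}+\varkappa(\alpha)^{1/\alpha}\Bigl((\bb_\infty)_\pm^{1/\alpha}+\sum_{\ell=1}^L\abs{\bb_\ell}^{1/\alpha}\Bigr),
\end{equation}
i.e.\ $\Delta_p^\pm(\bGank(\sh_*))=\delta_p^\pm(\bGank(\sh_*))=({\bf{\sf a}}^\pm)^p$ with ${\bf{\sf a}}^\pm$ as in the statement (note $\alpha p=1$). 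Finally, set $\bg=\sh-\sh_*$; since $\sh-\sh_*$ collects the error terms $\bg_\infty,\bg_1,\dots,\bg_L$ at infinity (and there is no singularity at $0$ because $\bb_0=0$, nor at $t_0$ beyond the $\tt b\,{\bff}$ term), Theorem~\ref{thm.a8} (applied with $\alpha=m+1$, $M=M(m+1)=m+1$) yields $\bGank(\bg)\in\Sch_{p,\infty}^0$, and Lemma~\ref{lma.b1} gives $\Delta_p^\pm(\bGank(\sh))=\Delta_p^\pm(\bGank(\sh_*))$ and likewise for $\delta_p^\pm$. This establishes \eqref{eq:Sloc} with the stated coefficient ${\bf{\sf a}}^\pm$.
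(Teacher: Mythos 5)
Your overall structure matches the paper's one-line proof exactly: decompose $\sh$ into model pieces, evaluate each via Lemma~\ref{sing1} and Theorems~\ref{thm.a9}, \ref{thm.a10E}, and add via the localization principle Theorem~\ref{thm.b6}; the formula you obtain is correct. But the middle of your argument rests on a genuine misunderstanding of the singular supports. You claim that the symbols of $\bGank(\bh_\infty)$ and $\bGank(\bh_\ell)$ ``also have $\infty$ in their singular supports,'' so that $\{\infty\}$ is shared with $\sing\supp\boldsymbol{\tau}_m$, and you then propose a workaround. Both the claim and the need for a workaround are unfounded. The Fourier transform exchanges the roles of $t=0$ and $t=\infty$: by Lemma~\ref{lma.e1}, the symbol $\bomega_\infty$ of the model kernel $\bq_\infty$ (singular as $t\to\infty$) lies in $C^\infty(\bbR_*\setminus\{0\})$, a set which includes $\infty$, so $\sing\supp\bomega_\infty=\{0\}$; and the symbols of the $\bh_\ell$, being shifts $\bomega_\infty(\cdot\mp\rho_\ell)$, have singular support $\{\rho_\ell,-\rho_\ell\}$. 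With $\bb_0=0$ there is no $\bh_0$ term, so in $\sh_*={\tt b}\bff+\bh_\infty+\sum_\ell\bh_\ell$ the singular supports $\{\infty\}$, $\{0\}$, $\{\pm\rho_1\},\dots,\{\pm\rho_L\}$ are pairwise disjoint and Theorem~\ref{thm.b6} applies directly; there is nothing to ``resolve.'' The symbol that genuinely lives at $\infty$ is $\bomega_0$, the symbol of $\bq_0$ (singular as $t\to 0$), and that is precisely why the paper imposes $\bb_0=0$: a singularity of $\bh$ at $t=0$ would put the corresponding symbol at the same point $x=\infty$ as $\boldsymbol{\tau}_m$, and the paper explicitly says its technique does not handle that overlap. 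You have the roles of $\bq_0$ and $\bq_\infty$ reversed, and the sketched cross-product argument near $\infty$ would not by itself rescue a genuine overlap in any case.

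Two smaller slips. In your final display the left-hand side should be $\Delta_p^\pm(\bGank(\sh_*))$, not $\Delta_p^\pm(\bGank(\sh_*))^{1/p}$: it is $\Delta_p^\pm$ itself that is additive and equals the bracketed sum, and ${\bf{\sf a}}^\pm$ is then obtained by raising to the power $1/p=\alpha$. And in evaluating the contribution of $\bh_\flat={\tt b}\,\bff$, you should note explicitly that $\bGank({\tt b}\,\bff)={\tt b}\,\bGank(\bff)$ swaps $\lambda_n^+$ and $\lambda_n^-$ when ${\tt b}<0$; this is harmless here because \eqref{eq:sing7} is symmetric in $\pm$, but it is worth saying.
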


The proof of this result is practically the same as that of
Theorem~\ref{thm.a7}. The only difference is that the term 
$\bh_{0} (t)$ should be replaced by  $ {\tt b}  {\bff}  (t)  $ in \eqref{eq:h1}.

 Observe that we have excluded the term  \eqref{z3} singular at $t=0$ in Theorem~\ref{sing}  because the corresponding symbol is singular
 at the same point $x=\infty$ as the function \eqref{eq:sing6}. 
 In this case one might expect that the   contributions of singularities of ${\sh}(t)$ at $t=0$ and $t=t_{0} >0$ are not independent of each other.
 In any case, our technique does not allow us to treat this situation.  Finally, we note that
we have chosen $\alpha=m+1$ in Theorem~\ref{sing} since in this case both the local singularity of ${\sh}(t)$ at $t=t_{0}$ and its ``tail"   as $t\to\infty$ contribute to the asymptotic coefficient ${\bf{\sf a} }^\pm $ in \eqref{eq:Sloc}.

Similarly to Section~5, Theorems~\ref{thm.a7} and \ref{sing} can also be reformulated in terms of Hankel operators in the Hardy space $H^2(\bbR)$, but we do not dwell upon it here.

\end{document}